\newtheorem{theorem}{Theorem}
\newtheorem{definition}[theorem]{Definition}
\newtheorem{lemma}[theorem]{Lemma}
\newtheorem{conjecture}[theorem]{Conjecture}
\newtheorem{corollary}[theorem]{Corollary}
\newtheorem{proposition}[theorem]{Proposition}
\newtheorem{question}[theorem]{Question}
\newcommand{\by}{\hspace{0.02in}{\scriptstyle{\square}}\hspace{0.02in}}
\title{Disjoint Dominating Sets with a Perfect Matching}
\author{William F. Klostermeyer\\School of Computing\\University of North Florida\\Jacksonville, FL 32224-2669\\{\small wkloster@unf.edu}
\and Margaret-Ellen Messinger\\Department of Mathematics and Computer Science\\Mount Allison University, Sackville, NB, Canada\\{\small mmessinger@mta.ca}
\and Alejandro Angeli Ayello\\Department of Combinatorics and Optimization\\University of Waterloo, Waterloo, ON, Canada\\{\small aloangeli@gmail.com }}
\begin{document}
\maketitle

\begin{abstract}
In this paper, we consider dominating sets $D$ and $D'$ such that $D$ and $D'$ are disjoint and there exists a perfect matching between them. Let $DD_{\textrm{m}}(G)$ denote the cardinality of smallest such sets $D, D'$ in $G$ (provided they exist, otherwise  $DD_{\textrm{m}}(G) = \infty$). This concept was introduced in [Klostermeyer et al., {\it Theory and Application of Graphs}, 2017] in the context of studying a certain graph protection problem.   We characterize the trees $T$ for which $DD_{\textrm{m}}(T)$ equals a certain graph protection parameter and for which $DD_{\textrm{m}}(T) = \alpha(T)$, where $\alpha(G)$ is the independence number of $G$.  We also further study this parameter in graph products, e.g., by giving bounds for grid graphs, and in graphs of small independence number.
\end{abstract}

\section{Introduction}\label{sec:intro}

Let $G=(V, E)$ be an undirected graph. A \emph{dominating set }of graph $G$ is a set $D\subseteq V$ such that for each $u\in V-D$, there exists an $x \in D$ adjacent to $u$.
The minimum cardinality amongst all dominating sets of $G$ is the \emph{domination number}, $\gamma(G)$. Two dominating sets $D, D'$ are called {\it disjoint dominating sets} if $D \cap D' = \emptyset$. Disjoint dominating sets have been studied in the literature, see for example \cite{hed, henning}, and in particular, the related concept of disjoint maximal independent sets was considered in \cite{erdos}.

Let $DD_{\textrm{m}}(G)$ denote the cardinality of the smallest disjoint dominating sets $D$ and $D'$ of $G$ such that there is a perfect matching between them. If $G$ has no such sets, take $K_{1, m}$ with $m \geq 2$ for example, then define $DD_{\textrm{m}}(G) = \infty$. As a simple example, it is easy to see that $DD_{\textrm{m}}(P_4)=2$. The parameter $DD_{\textrm{m}}(G)$ was introduced in \cite{EvictionPaper} in the context of a certain graph protection problem that we define below.  It is important to note that the perfect matching exists {\it between} the sets $D$ and $D'$ in graph $G$ whereas $G$ itself, may or may not have a perfect matching.  

Various {\it graph protection} models have been considered in the literature, many of which are surveyed in \cite{survey}. In a graph protection problem, mobile guards aim to defend a graph from a sequence of attacks.  The first of these models was motivated by the distribution and movement of field armies during the decline of the Roman Empire~\cite{Roman}. In most graph protection models, guards occupy vertices of a dominating set $D$ on a graph $G$.  When a vertex is attacked, the guards defend against the attack by  ``moving'' to occupy a dominating set $D'$.  In a ``move'', each guard moves to a vertex in the closed neighborhood of its currently occupied vertex.  In the {\it $m$-eternal domination model}, a vertex $v \in V(G)$ is attacked and it is required that $v \in D'$ (i.e., a guard must move to $v$) whereas in the {\it $m$-eternal eviction model}, a vertex $v \in D$ is attacked and it is required that $v \notin D'$ (i.e., a guard must move off of $v$). In both of these models, the sequence of attacks is infinitely long. The $m$-eternal domination model was introduced in \cite{GHH} and the $m$-eternal eviction model was introduced in \cite{EvictionOnTrees}. Let $e_{\textrm{m}}^\infty(G)$ be the {\it eviction number} of $G$, which denotes the minimum number of guards required to defend against any sequence of attacks in the $m$-eternal eviction model. For example, $e_{\textrm{m}}^\infty(P_3)=2$.



The $m$-eternal domination and $m$-eternal eviction models have been studied on grid graphs in~\cite{FMvB, GKM, EvictionPaper, MD}.  For the $m$-eternal eviction model, it was observed~\cite{EvictionPaper} that for most $2 \times n$, $3 \times n$, and $4 \times n$ grid graphs, only two disjoint dominating sets are needed to optimally defend against any sequence of attacks.  In other words, there exist dominating sets $D$ and $D'$ such that if guards occupy the vertices of $D$, then no matter which vertex in $D$ is attacked, the guards can move to occupy the vertices of $D'$, where $D \cap D' = \emptyset$.  Thus, we are motivated to ask for which graphs $G$ do such dominating sets $D$ and $D'$ exist? And when such sets exist, what is the minimum cardinality of such sets?

For a graph $G$, we define a dominating set $D$ to be a {\it swap set} if there exists a dominating set $D'$ such that there is a perfect matching between $D$ and $D'$. Again, we note that $G$ may or may not have a perfect matching and that we are simply concerned with $D$ and $D'$ having a perfect matching between them in $G$.  Note that implicitly, $D \cap D' = \emptyset$.  The {\it swap number} of graph $G$ is the minimum cardinality of a swap set on graph $G$, provided one exists. In other words, the swap number is equal to $DD_{\textrm{m}}(G)$.  Since any swap set is an $m$-eternal eviction set (as observed in \cite{EvictionPaper}), for any graph $G$, $DD_{\textrm{m}}(G)\geq e_{\textrm{m}}^\infty(G)$.

\begin{figure}[htbp]
\[ \includegraphics[width=0.3\textwidth]{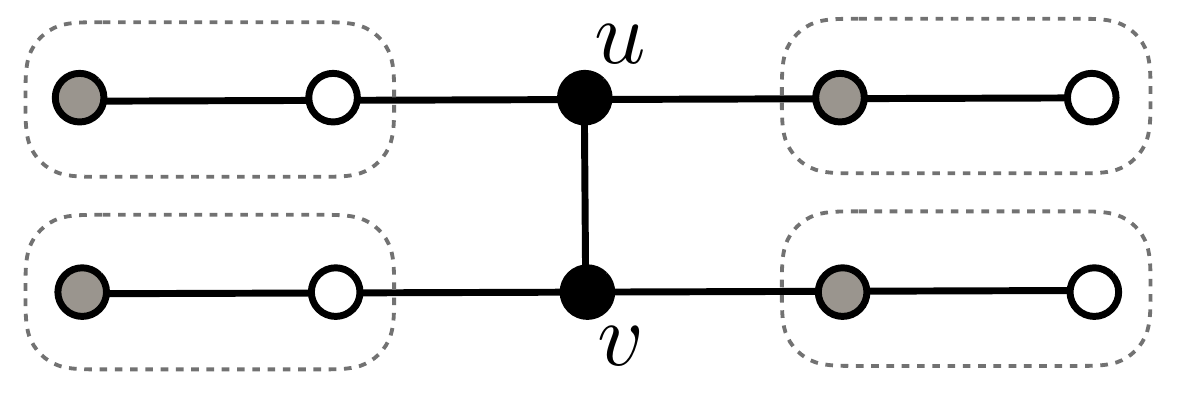} \]

\label{fig}

\caption{Disjoint dominating sets with a perfect matching indicated by the dotted lines.}
\end{figure}

Let $\alpha(G)$ denote the independence number of $G$. Let $D$ denote the set of vertices colored white and $D'$ denote the set of vertices colored gray in the tree $T$ shown in Figure~\ref{fig}; then $D$ and $D'$ are swap sets and $DD_{\textrm{m}}(T) = 4$; additionally, observe $\gamma(T) = e_{\textrm{m}}^\infty(T) = DD_{\textrm{m}}(T)$.  Observe that $D \cup \{u\}$ and $D' \cup \{v\}$ are also swap sets, but have cardinality $5$.  As a second example, consider $K_{1,3}$.  Certainly, no swap set exists in this graph: $e_{\textrm{m}}^\infty(K_{1,3}) = 3 < DD_{\textrm{m}}(K_{1,3})=\infty$. For an example of a graph with minimum degree two and no swap set, take $K_3$, duplicate each edge and then subdivide each edge. For this graph $G$, $\alpha(G)=6$, $|V(G)|=9$, $DD_{\textrm{m}}(G) = \infty$.

Finally, the graph $G$ shown in Figure~\ref{Fig2} illustrates there may be strict inequality with respect to the eviction and the swap numbers: $e_{\textrm{m}}^\infty(G) = 3 < 4 = DD_{\textrm{m}}(G)$.  For the $m$-eviction model, we simply place a guard at each of the gray vertices in Figure~\ref{Fig2}.  After the guards move in response to any attack, at least two of the three guards can occupy gray vertices.  Then $3$ guards are both sufficient and necessary (as $\gamma(G)=3)$ to defend any sequence of attacks and $3 = e_{\textrm{m}}^\infty(G) \leq DD_{\textrm{m}}(G)$.  Suppose $DD_{\textrm{m}}(G) = 3$ and let $D$, $D'$ be minimum swap sets with a perfect matching between them.  Observe that each of $D$, $D'$ contains exactly one vertex from each $3$-cycle.  Without loss of generality, $a_1 \in D$ (then $a_1 \notin D'$).  Then $a_2,a_3 \in D'$, otherwise the vertices on the $6$-cycle will not be dominated by set $D'$. However, not all the vertices of the $6$-cycle will be dominated by $D$.  Therefore, $DD_{\textrm{m}}(G) \geq 4$.  The reader may easily verify that $DD_{\textrm{m}}(G) = 4$.  Thus, $3 = e_{\textrm{m}}^\infty(G) < DD_{\textrm{m}}(G) = 4$.

\begin{figure}[htbp]
\[ \includegraphics[width=0.35\textwidth]{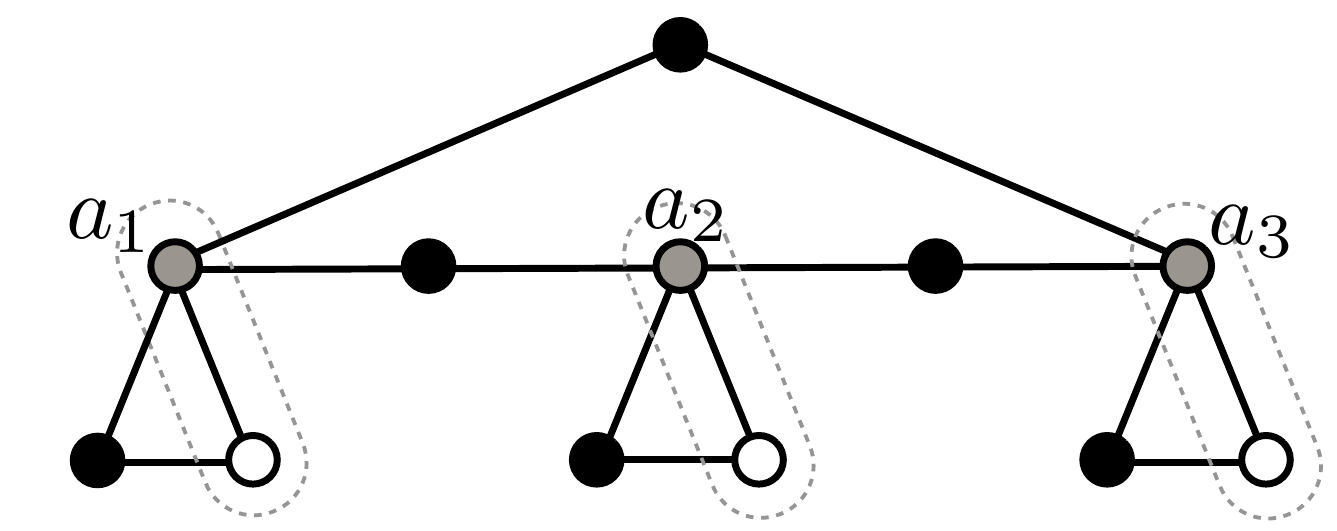} \]

\caption{A graph $G$ for which $e_\textrm{m}^\infty(G) = 3 < 4 = DD_{\textrm{m}}(G)$.}

\label{Fig2}
\end{figure}

Observations about Figures~\ref{fig} and~\ref{Fig2} lead to the following natural question.

\begin{question} Can we characterize the graphs $G$ for which $e_{\emph{m}}^\infty(G) = DD_{\emph{m}}(G)$? \end{question}

We begin with introductory results in Section~\ref{definitions}, pointing out a minor mistake in the definition of star partitionings used in~\cite{EvictionOnTrees} and~\cite{KlosterMyn} that were used to determine $e_{\textrm{m}}^\infty$ for trees.  In Section~\ref{sec:trees}, we use star partitionings to show that for certain trees, $DD_{\textrm{m}}(T)$ and $e_{\textrm{m}}^\infty(T)$ are equal.  This provides an alternate proof for the result of~\cite{EvictionOnTrees} that shows the $m$-eternal eviction number equals the star partitioning number.   In Section~\ref{sec:ind}, we characterize trees for which $DD_{\textrm{m}}(G)$ equals $\alpha(G)$. In Section~\ref{sec:products}, we consider graph products, and return to the graphs whose eviction number motivated the definition of the swap number: grid graphs.  We show the swap number of any $m \times n$ grid graph is $mn/5+o(m+n)$ and determine the swap number for some small grid graphs. We also provide an upper bound for $DD_{\textrm{m}}(G \by H)$ for any non-trivial graphs $G$ and $H$ --- regardless of whether $G$ and $H$ have swap sets.  Finally, in Section~\ref{sec:ind2}, we consider graphs with independence number at most three and pose two conjectures.


\section{Definitions and Preliminary Results}\label{definitions}


Denote the open and closed neighborhoods of a vertex $x\in V$ by $N(x)$ and
$N[x]$, respectively. That is, $N(x)=\{v|xv\in E\}$ and $N[x]=N(x)\cup\{x\}$.
Further, for $S\subseteq V$, let
$N(S)=\bigcup_{x\in S}N(x)$. 

A {\it star} is the complete bipartite graph $K_{1, n}$ for some $n \geq 1$. The {\it center} is either the unique vertex of degree greater than one or, in the case of $K_{1, 1}$ an arbitrarily selected vertex.
A graph with $n$ vertices is {\it trivial} if $n \leq 1$ and {\it non-trivial} otherwise.

We state a definition from \cite{EvictionOnTrees}.

\begin{definition}\label{defn} \cite{EvictionOnTrees} A {\it star partitioning} $P$ of a non-trivial tree $T$ is a partition of the vertices of $T$ such that \vspace{0.1in}

(i) each part of $P$ induces a star, and

(ii) no two $K_1$ parts are adjacent.\vspace{0.1in}

A star partitioning of a non-trivial tree $T$ is a special star partitioning if\vspace{0.1in}

(iii) each vertex of $T$ adjacent to exactly one leaf forms a $K_2$ part of $P$ with this leaf, and

(iv) any $K_1$ part is adjacent to at least two parts that are not $K_1$'s.\end{definition}

A star $S$ of order $k \geq 1$ in a special star partitioning is assigned {\it weight} $k-1$.  The {\it weight} $\omega(P)$ of a special star partitioning $P$ is the sum of the weights of its parts and $s(T)$ is the minimum weight of any special star partitioning of $T$.

\begin{proposition}\label{prop} [Proposition 28 in~\cite{EvictionOnTrees}] For any tree $T$ with at least two vertices, $s(T) = e_{\emph{m}}^\infty(T)$.\end{proposition}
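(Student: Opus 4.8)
The plan is to establish equality $s(T) = e_{\mathrm{m}}^\infty(T)$ by proving the two inequalities separately, since this is the standard route for matching a combinatorial optimization parameter against a game-theoretic one. The content of this proposition comes from \cite{EvictionOnTrees}, and because the paper flags a ``minor mistake'' in the original definition of star partitionings, I expect the proof here to reprove the result carefully using the corrected Definition~\ref{defn}, in particular relying on conditions (iii) and (iv) of special star partitionings to rule out the degenerate configurations that caused the original error.

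First I would prove $e_{\mathrm{m}}^\infty(T) \le s(T)$ by exhibiting an explicit eviction strategy built from a minimum-weight special star partitioning. Given such a partitioning $P$, the idea is to place guards so that each star part $S$ of order $k$ carries $k-1$ guards, which accounts for a total of $\omega(P) = s(T)$ guards; these occupy a dominating set because every $K_1$ part, by condition (iv), is adjacent to at least two non-$K_1$ parts and hence is dominated, while within each star the $k-1$ guards can be arranged to dominate all $k$ vertices of the star. The key step is to verify that these guards can perpetually defend against the eviction attacks: when a vertex $v$ occupied by a guard is attacked, the guard must move off $v$ while the configuration remains a dominating set. I would argue that within each star part the guards can shuffle among the star's vertices (sliding along edges toward the center or out to leaves) to evacuate any attacked vertex, and that condition (iii) guarantees the troublesome ``vertex adjacent to a single leaf'' is handled by a dedicated $K_2$ part so that such a vertex is never forced into an indefensible position.

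Next I would prove $s(T) \le e_{\mathrm{m}}^\infty(T)$, which I expect to be the harder direction and the main obstacle. Here the natural approach is the contrapositive/extremal one: take an optimal eviction configuration using $e_{\mathrm{m}}^\infty(T)$ guards and extract from the guarded/unguarded structure a special star partitioning of weight at most the number of guards. The difficulty is that an arbitrary optimal eviction strategy need not visibly decompose into stars, so I would instead argue by induction on the structure of $T$, pruning leaves or leaf-stars and tracking how removing a part changes both parameters in lockstep. Conditions (iii) and (iv) are precisely what make the induction go through without the counterexamples that the flawed original definition admitted, so the careful bookkeeping around these two conditions — ensuring that forced $K_2$ parts and isolated $K_1$ parts are assigned the right weight and never over- or under-counted — is where the real work lies. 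Combining the two inequalities yields $s(T) = e_{\mathrm{m}}^\infty(T)$, completing the proof.
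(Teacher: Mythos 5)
Your proposal sets out to prove a statement that this paper never proves---in fact, the paper quotes Proposition~\ref{prop} from \cite{EvictionOnTrees} precisely in order to refute it as stated. For the tree $T$ of Figure~\ref{fig}, condition (iii) of Definition~\ref{defn} forces each of the four weak stems into a $K_2$ part with its leaf, and condition (ii) (no two $K_1$ parts adjacent) then forces the two adjacent black vertices of degree three into a $K_2$ part of weight $1$; hence every special star partitioning has weight at least $5$, while $e_{\textrm{m}}^\infty(T)=4$. So $s(T)=5\neq 4=e_{\textrm{m}}^\infty(T)$, and the claimed equality is false. The paper's remedy is to delete condition (ii), obtaining the simple star partitioning of Definition~\ref{defn1} and the parameter $S(T)$, for which equality does hold (Proposition~\ref{prop1}); the original proof in \cite{EvictionOnTrees} carries over because it never uses condition (ii).

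This pinpoints exactly where your argument would collapse. Your first inequality, $e_{\textrm{m}}^\infty(T)\le s(T)$, is fine: a special star partitioning is in particular a simple one, so $e_{\textrm{m}}^\infty(T)=S(T)\le s(T)$, and your guard-shuffling strategy is the right intuition for why. But your second inequality, $s(T)\le e_{\textrm{m}}^\infty(T)$, cannot be rescued by any induction or bookkeeping, because it is false---the Figure~\ref{fig} tree is a counterexample. Note also that you have the definitions reversed: Definition~\ref{defn} (defining $s$) is the flawed original and Definition~\ref{defn1} (defining $S$) is the correction, and the troublesome condition is (ii) of Definition~\ref{defn}, not conditions (iii) and (iv) on which your proposal concentrates. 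A correct treatment must either prove Proposition~\ref{prop1} for $S(T)$, or, as this paper does in Section~\ref{sec:trees}, recover the equality for weak trees via Theorem~\ref{lemma: Evicgeqsw} and Corollary~\ref{cor}.
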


Consider once again, the tree $T$ given in Figure~\ref{fig} and recall that $DD_{\textrm{m}}(T) = e_{\textrm{m}}^\infty(T)=4$: the lower bound follows as $\gamma(T) = 4$.  Let the gray colored vertices form set $D$ and the white colored vertices form set $D'$.  Then $DD_{\textrm{m}}(T)=4$.  For eviction, the guards simply move between sets $D$ and $D'$ in response to attacks.  However, we observe that following the definition for a special star partitioning, given by~\cite{EvictionOnTrees} and~\cite{KlosterMyn}, we find that $s(T) = 5$.  By Condition (iii) of Definition~\ref{defn}, there are (at least) four $K_2$ parts in Figure~\ref{fig}; these are indicated by dotted lines.  Following the Condition (ii), the remaining two vertices (colored black) must form a $K_2$.  Thus, $s(T) =5 \neq 4 = e_{\textrm{m}}^\infty(T)$.

We observe, however, that if Condition (ii) of Definition~\ref{defn} is removed and a special star partition is allowed to have two $K_1$ parts adjacent, then the weight of the special star partitioning for the tree $T$ in Figure~\ref{fig} is reduced to $4$ and then equals $e_{\textrm{m}}^\infty(T)$ (in this case the black vertices are each $K_1$ parts with weight $0$).  More generally, one may observe that if Condition (ii) of Definition~\ref{defn} is removed, then the proof of Proposition~\ref{prop} (Proposition 28 in~\cite{EvictionOnTrees}) holds (as it does not depend on Condition (ii) of Definition~\ref{defn}).

We give a corrected definition below, which we call a  {\it simple star partitioning} for clarity.

\begin{definition}\label{defn1} A {\it simple star partitioning} $P$ of a non-trivial tree $T$ is a partition of the vertices of $T$ such that \vspace{0.1in}

(i) each part of $P$ induces a star,

(ii) each vertex of $T$ adjacent to exactly one leaf forms a $K_2$ part of $P$ with this leaf, and

(iii) any $K_1$ part is adjacent to at least two parts that are not $K_1$'s.\end{definition}

The weight $S(T)$ of a simple star partitioning $P$ is defined analogously to that of a special star partitioning.

Thus, with the change in definition, Proposition~\ref{prop} [Proposition~28 in~\cite{EvictionOnTrees}] becomes the following.

\begin{proposition}\label{prop1} For any tree $T$ with at least two vertices, $S(T) = e_{\emph{m}}^\infty(T)$.
\end{proposition}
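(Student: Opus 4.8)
The plan is to show that Proposition~\ref{prop1} follows from the existing Proposition~\ref{prop} (Proposition 28 of~\cite{EvictionOnTrees}) together with the two observations already made in the excerpt, rather than reproving the equality from scratch. The excerpt establishes two facts: first, that the weight of the special star partitioning in Figure~\ref{fig} drops to $e_{\textrm{m}}^\infty(T)$ once Condition (ii) of Definition~\ref{defn} is dropped; and second, and crucially, that the proof of Proposition~\ref{prop} never actually uses Condition (ii) of Definition~\ref{defn}. So my first step is to make this dependency claim precise: I would restate that the minimum-weight argument in the original proof relies only on Conditions (i), (iii), and (iv) of Definition~\ref{defn}, which are exactly Conditions (i), (ii), and (iii) of Definition~\ref{defn1}. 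Thus a simple star partitioning is precisely a special star partitioning with the non-adjacency requirement on $K_1$ parts removed.

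Next I would argue the two inequalities $S(T) \leq e_{\textrm{m}}^\infty(T)$ and $S(T) \geq e_{\textrm{m}}^\infty(T)$ separately, reusing the structure of the original proof. For the upper bound $S(T) \le e_{\textrm{m}}^\infty(T)$, I would take an optimal eviction strategy and extract from it a vertex partition into stars satisfying Conditions (i)--(iii) of Definition~\ref{defn1}, with total weight at most $e_{\textrm{m}}^\infty(T)$; this is the same construction as in~\cite{EvictionOnTrees}, and since dropping a constraint can only enlarge the feasible set, any special star partitioning is also a simple one of the same weight, so this direction is immediate. For the lower bound $S(T) \ge e_{\textrm{m}}^\infty(T)$, I would take a minimum-weight simple star partitioning $P$ and build an eviction strategy using $\omega(P)$ guards exactly as in the original proof; the key point to verify is that the guard-movement argument goes through without ever invoking non-adjacency of $K_1$ parts.

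The main obstacle, and the step I would treat most carefully, is confirming that the original lower-bound argument really is independent of Condition (ii) of Definition~\ref{defn}. I would trace through the guard-assignment scheme: each star of order $k$ is defended by $k-1$ guards, and a $K_1$ part is defended by guards borrowed from its non-$K_1$ neighbors (which is why Condition (iv)/(iii) requiring at least two non-$K_1$ neighbors is essential). Since the borrowing mechanism draws only on non-$K_1$ neighbors, the presence of an adjacent $K_1$ part neither helps nor hinders the defense, so removing the non-adjacency condition leaves the argument intact. Having verified this, I can conclude that the minimum weight over simple star partitionings equals the minimum weight over special star partitionings in every case where the original proof applies, and equals $e_{\textrm{m}}^\infty(T)$; moreover, since every special star partitioning is a simple one, the corrected minimum can only be smaller or equal, which is exactly what repairs the discrepancy exhibited by Figure~\ref{fig}. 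This yields $S(T) = e_{\textrm{m}}^\infty(T)$ for every tree $T$ with at least two vertices.
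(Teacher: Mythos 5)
Your proposal is correct and takes essentially the same route as the paper: the paper gives no separate proof of Proposition~\ref{prop1}, justifying it exactly as you do by observing that the proof of Proposition~28 in~\cite{EvictionOnTrees} never invokes Condition~(ii) of Definition~\ref{defn} and therefore carries over verbatim once special star partitionings are replaced by simple ones. Your explicit tracing of the two inequalities and of the guard-borrowing mechanism for $K_1$ parts (which draws only on non-$K_1$ neighbours) is precisely the verification the paper leaves implicit.
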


\begin{definition}
A {stem} is a vertex adjacent to a vertex of degree one. A {weak stem} is a vertex adjacent to exactly one vertex of degree one and a {strong stem} is a vertex adjacent to two or more vertices of degree one.  A weak graph is a graph with no strong stems whereas a strong graph is a graph with at least one strong stem.
\end{definition}

\begin{lemma}\label{lemma:fwd_sw_set_characterisation}
If $G$ is a strong graph then $DD_{\emph{m}}(G) = \infty$.
\end{lemma}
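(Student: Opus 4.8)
The plan is to argue by contradiction, exploiting the fact that a leaf has a unique neighbour. Suppose $G$ is strong, and let $v$ be a strong stem, so that $v$ has at least two neighbours of degree one, say leaves $\ell_1$ and $\ell_2$. Assume for contradiction that disjoint dominating sets $D$ and $D'$ with a perfect matching $M$ between them exist in $G$; I will derive a contradiction from the single vertex $v$. The key observation driving everything is that $N(\ell_i) = \{v\}$, so \emph{both} the requirement that $\ell_i$ be dominated and the requirement that $\ell_i$ be matched can only be satisfied through $v$.

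First I would dispose of the case $v \notin D \cup D'$. If $\ell_1$ lies outside $D$, then since $v \notin D$ the only vertex that could dominate $\ell_1$ from $D$ is absent, so $\ell_1$ is undominated by $D$; hence $\ell_1 \in D$. The symmetric argument for $D'$ forces $\ell_1 \in D'$, contradicting $D \cap D' = \emptyset$. So $v$ must lie in exactly one of $D, D'$, and by symmetry I may assume $v \in D$ (hence $v \notin D'$).

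Next I would locate the leaves. Since $v \notin D'$ and $N(\ell_i) = \{v\}$, the set $D'$ can dominate $\ell_i$ only by containing it; thus $\ell_1, \ell_2 \in D'$. Now I bring in the matching $M$: it is a perfect matching between $D$ and $D'$, so every vertex of $D'$ is matched along an edge of $G$ to a vertex of $D$. Consider the matched partners of $\ell_1$ and $\ell_2$. As each $\ell_i$ has $v$ as its only neighbour in $G$, the edge of $M$ incident to $\ell_i$ must be $\ell_i v$. But then $v$ is matched to both $\ell_1$ and $\ell_2$, which is impossible in a matching. This contradiction shows that no such $D, D'$ can exist, so $DD_{\textrm{m}}(G) = \infty$.

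The argument is short and the steps are forced, so I do not anticipate a genuine obstacle; the only point requiring care is bookkeeping the case analysis so that the \emph{domination} constraint and the \emph{matching} constraint are invoked in the right order. In particular, the crux is recognizing that domination of the two leaves pins them both into $D'$ (once $v \in D$), after which the matching condition alone produces the contradiction, since a strong stem supplies at least two leaves competing for the single matching edge at $v$.
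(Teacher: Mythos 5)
Your proof is correct and follows essentially the same approach as the paper's: fix a strong stem $v$ with leaves $\ell_1,\ell_2$, use domination to force both leaves into the dominating set avoiding $v$, and then observe the matching cannot pair both leaves with their unique neighbour $v$. Your explicit handling of the case $v \notin D \cup D'$ (via disjointness) is a slightly cleaner organization of what the paper dismisses as ``a similar argument,'' but the substance is identical.
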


\begin{proof}
Let $G$ be a strong graph and assume, by way of contraction, that $DD_{\textrm{m}}(G)=k$ for some integer $k$.  Let $D$ and $D'$ be minimum disjoint dominating sets with a perfect matching between them.  Since $G$ is a strong graph, $\exists~v \in V(G)$ such that $v$ is adjacent to leaves $\ell_1$ and $\ell_2$. Note that at least one of $v,\ell_1$ is in $D$; else $\ell_1$ is not dominated.  If $v \in D$, then $v \notin D'$.  Then $\ell_1, \ell_2 \in D'$, else $D'$ does not dominate all vertices of $G$.  However, at most one of $v\ell_1$, $v\ell_2$ is in the perfect matching between $D$ and $D'$ and we have a contradiction (as one of $\ell_1,\ell_2$ is not paired with any vertex of $D$).  If $v \notin D$, we have a contradiction via a similar argument.\end{proof}

Finally, we state a simple, but useful result.  Suppose $G$ has a spanning subgraph $H$ and where $H$ has a swap set.  Then let $D,D'$ be disjoint dominating sets on $H$ with a perfect matching.  Certainly $D,D'$ are also disjoint dominating sets on $G$ with a perfect matching as every edge of $H$ is also in graph $G$.

\begin{lemma}\label{subgraph}
Let $G$ be a graph.  If $G$ has a spanning subgraph $H$ and $H$ has a swap set, then $DD_{\emph{m}}(G) \leq DD_{\emph{m}}(H)$.\end{lemma}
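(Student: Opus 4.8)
The plan is to exhibit, directly, a swap set of $G$ whose cardinality equals $DD_{\textrm{m}}(H)$; the desired inequality then follows immediately from the definition of $DD_{\textrm{m}}(G)$ as a minimum over all swap sets of $G$. Since $H$ has a swap set, $DD_{\textrm{m}}(H)$ is finite, so I would begin by letting $D, D'$ be disjoint dominating sets of $H$ with a perfect matching $M$ between them and with $|D| = |D'| = DD_{\textrm{m}}(H)$. I would first record the two structural facts that make ``spanning subgraph'' exactly the right hypothesis: $V(H) = V(G)$ and $E(H) \subseteq E(G)$.

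Next I would verify that $D$ and $D'$ remain dominating sets in $G$. Because $V(H) = V(G)$, the sets $D, D'$ are subsets of $V(G)$ and every vertex that must be dominated in $G$ is a vertex of $H$. For any $u \in V(G) \setminus D$, domination in $H$ yields an $x \in D$ with $ux \in E(H)$; since $E(H) \subseteq E(G)$, this same edge certifies that $u$ is dominated by $D$ in $G$. The identical argument applies to $D'$. Thus adding edges only preserves the domination property, so both sets dominate $G$.

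I would then check that the matching transfers. Each edge of the perfect matching $M$ lies in $E(H) \subseteq E(G)$, so $M$ is a set of edges of $G$; it remains a matching (the vertex-disjointness of its edges is unaffected by adding edges to the host graph) and it still saturates every vertex of $D$ and of $D'$, hence it is a perfect matching between $D$ and $D'$ in $G$. Finally, $D \cap D' = \emptyset$ is a condition on vertices and is unchanged. Therefore $D, D'$ are disjoint dominating sets of $G$ with a perfect matching between them; that is, $D$ is a swap set of $G$, and so $DD_{\textrm{m}}(G) \leq |D| = DD_{\textrm{m}}(H)$.

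I do not expect any genuine obstacle here: the whole content is the monotonicity, under the addition of edges on a fixed vertex set, of both the domination property and the property of having a perfect matching to a second dominating set. The only point requiring the slightest care is to invoke $V(H) = V(G)$ (not merely $V(H) \subseteq V(G)$), so that dominating $H$ and dominating $G$ concern the same vertex set; this is precisely what the word \emph{spanning} supplies, and it is the reason the lemma would fail for an arbitrary, non-spanning subgraph.
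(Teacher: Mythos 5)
Your proof is correct and follows essentially the same route as the paper: take minimum disjoint dominating sets $D, D'$ of $H$ with a perfect matching and observe that, since $V(H)=V(G)$ and $E(H)\subseteq E(G)$, both the domination property and the matching carry over to $G$, yielding $DD_{\textrm{m}}(G) \leq DD_{\textrm{m}}(H)$. You simply spell out in full detail (including the role of the word \emph{spanning}) what the paper states in two sentences preceding the lemma.
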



\section{Trees}\label{sec:trees}


\subsection{Basic Results on Trees}\label{subsec:sw}


In this section, we show that if $T$ is a non-trivial tree with $DD_{\textrm{m}}(T)=k$ for some finite $k$, then $DD_{\textrm{m}}(T) = e_{\textrm{m}}^\infty(T) = S(T)$. From Lemma~\ref{lemma:fwd_sw_set_characterisation}, we know that if $T$ is not a weak tree (i.e., if $T$ is a strong tree), then $DD_{\textrm{m}}(T)=\infty$.  Thus, we focus on weak trees.

\begin{lemma}\label{lemma:K1K2partition}
If $T$ is a non-trivial weak tree then there exists a minimum-weight simple star partitioning $P$ of $T$ such that all parts of $P$ are either $K_1$'s or $K_2$'s.
\end{lemma}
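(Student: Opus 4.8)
The plan is to run an extremal exchange argument built on the observation that the weight of any simple star partitioning $P$ of a tree $T$ on $n$ vertices equals $\sum_{R\in P}(|R|-1)=n-|P|$, where $|P|$ is the number of parts. Hence minimizing weight is the same as maximizing the number of parts, and any modification that does not decrease the number of parts does not increase the weight. I would take a simple star partitioning $P$ of minimum weight that, among all minimum-weight partitionings, has the fewest parts of order at least $3$ (call these \emph{big} parts), and suppose for contradiction that $P$ still contains a big part $B$, with center $c$ and petals $\ell_1,\dots,\ell_{k-1}$ where $k\ge 3$.

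First I would exploit that $T$ is weak. Since $T$ has no strong stem, condition (ii) of Definition~\ref{defn1} pins every weak stem into a $K_2$ with its leaf; as $B$ has order at least $3$, neither $c$ nor any petal can be a weak stem, so none of them is adjacent to a leaf. In particular every petal $\ell_i$ has degree at least $2$ and thus a neighbour outside $B$. I would also record the elementary tree fact that distinct petals have disjoint neighbourhoods outside $B$, since a shared outside neighbour $w$ would create the $4$-cycle $c,\ell_i,w,\ell_j,c$.

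Next I would build a competitor $P'$ replacing $B$ by $K_1$'s and $K_2$'s without decreasing the number of parts. I keep $\{c,\ell_1\}$ as a $K_2$, and for each remaining petal $\ell_i$ ($i\ge 2$) I split into cases: if some outside neighbour $w$ of $\ell_i$ is a $K_1$ part of $P$, I merge them into the $K_2$ part $\{\ell_i,w\}$; otherwise all outside neighbours of $\ell_i$ lie in non-$K_1$ parts, and I make $\{\ell_i\}$ a lone $K_1$. The main obstacle, and the real content, is verifying condition (iii) for $P'$, namely that every $K_1$ part retains two non-$K_1$ neighbours. A newly created lone petal is fine, since $c$ (now in a $K_2$) and an untouched outside non-$K_1$ part supply the two neighbours. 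The subtle danger is a retained $K_1$ part $\{u\}$ of $P$ that had counted $B$, through some petal, as one of its two non-$K_1$ neighbours; I would show this never fails, because a petal is demoted to a lone $K_1$ only when all its outside neighbours are already non-$K_1$, so no such dependent $\{u\}$ is adjacent to a demoted petal, while the merging operation only turns former $K_1$'s into non-$K_1$'s and dissolves no existing non-$K_1$ part, hence can only help (iii). Condition (ii) is immediate because the only moved vertices are non-weak-stems (a weak stem is already pinned to a $K_2$ in any valid partitioning, so neither a $B$-vertex nor a borrowed lone-$K_1$ vertex can be one), and condition (i) is trivial.

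Finally I would do the bookkeeping: dissolving $B$ together with the $t\le k-2$ borrowed lone $K_1$'s removes $1+t$ parts spanning $k+t$ vertices, whereas the new parts number $k-1$ over the same vertices, so $|P'|-|P|=(k-1)-(1+t)=k-2-t\ge 0$. Thus $P'$ has weight at most that of $P$ and is again minimum-weight, yet $P'$ has strictly fewer big parts, contradicting the choice of $P$. Hence $P$ has no big part, which proves the lemma. I expect ruling out the cascading failure of condition (iii) among neighbouring $K_1$ parts to be the step demanding the most care.
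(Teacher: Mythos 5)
Your proof is correct and rests on essentially the same exchange argument as the paper's: since $T$ is weak and condition (ii) of Definition~\ref{defn1} holds, no vertex of a big star part can be a stem, so each petal has an outside neighbour, and the same dichotomy applies --- merge the petal with an adjacent $K_1$ part into a $K_2$ (weight-neutral) or turn it into a lone $K_1$ (weight-decreasing) --- followed by a check that conditions (ii) and (iii) survive. The only difference is packaging: the paper shrinks a big part one vertex at a time and iterates, whereas you dissolve an entire big part at once and close the argument with a secondary extremal criterion (fewest big parts) and the identity $\omega(P)=|V(T)|-|P|$; your verification of condition (iii), including the point that a pre-existing $K_1$ part can never be adjacent to a demoted petal, is if anything more thorough than the paper's.
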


\begin{proof}
Let $T$ be a non-trivial weak tree. By way of contradiction, assume that every minimum-weight simple star partitioning of $T$ has a star part that is neither a $K_1$ nor a $K_2$ part.  Let $P$ be a minimum weight simple star partitioning of $T$ and $v$ be the central vertex of a star part $S$ in $P$ that is neither a $K_2$ nor a $K_1$.  Let $x,y$ be vertices in $S$ and observe that since $T$ is weak, at most one of $x,y$ is a leaf in $T$.  Without loss of generality, suppose $x$ is not a leaf in $T$.  If none of the vertices of $N(x)\backslash\{v\}$ are $K_1$ parts in $P$, then $P$ is not minimum: in this case,  we create a new partitioning $P'$ which differs from $P$ in that $x$ is a $K_1$ part, rather than in $S$.  Observe $P'$ remains a simple star partitioning as $x$ (a $K_1$ part) is adjacent to at least two non-$K_1$ parts.  Since the number of vertices in $S$ is one fewer in $P'$ than $P$ and a $K_1$ part contributes weight $0$, $P'$ has weight one less than $P$.  Thus, every vertex in $N(x) \backslash \{v\}$ is a $K_1$ part.  Let $u$ be a vertex in $N(x)\backslash\{v\}$.  Then we can construct a new simple star partitioning $P''$ from $P$ where $x$ and $u$ form a $K_2$ part, instead of $x$ being in $S$.  Observe that $P''$ is a simple star partitioning and the weight of $P''$ equals that of $P$.

We can repeat this same process for all star parts that are not $K_1$ or $K_2$ parts to construct a minimum-weight simple star partitioning with only $K_1$ and $K_2$ parts.
\end{proof}

\begin{theorem}\label{lemma: Evicgeqsw} If $T$ is a non-trivial weak tree then $T$ has a swap set and $DD_{\emph{m}}(T)\leq S(T)$.\end{theorem}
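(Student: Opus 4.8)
The plan is to exploit Lemma~\ref{lemma:K1K2partition}, which furnishes a minimum-weight simple star partitioning $P$ of $T$ in which every part is a $K_1$ or a $K_2$. I will use this combinatorial structure to directly construct the two disjoint dominating sets $D$ and $D'$ together with the perfect matching between them, and then verify that the total size is $S(T)$, giving simultaneously the existence of a swap set and the bound $DD_{\textrm{m}}(T)\le S(T)$.

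First I would fix such a partitioning $P$ and read off the sets from its parts. For each $K_2$ part $\{a,b\}$, the natural move is to place $a\in D$ and $b\in D'$, using the edge $ab$ as a matching edge; this contributes one vertex to each set and weight $1$ to $\omega(P)=S(T)$, so the $K_2$ parts contribute exactly their weight to $|D|=|D'|$. The $K_1$ parts have weight $0$, so they should contribute nothing to the count; the question is whether the $K_2$ parts alone already dominate $T$. A $K_1$ part $v$ is, by Condition~(iii), adjacent to at least two non-$K_1$ parts, i.e.\ to at least two $K_2$ parts, so $v$ has neighbors in both $D$ and $D'$ (at least one matched endpoint of a $K_2$ lies in each set), hence $v$ is dominated by both $D$ and $D'$. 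For the endpoints of the $K_2$ parts themselves, each is dominated by its partner (the matching edge), so every $K_2$-vertex is dominated by the other set. Thus $D$ and $D'$, consisting of one endpoint from each $K_2$ part, are each dominating sets of $T$, are disjoint, and the chosen edges give a perfect matching between them of size equal to the number of $K_2$ parts, which is $\omega(P)=S(T)$. Therefore $DD_{\textrm{m}}(T)\le |D|=S(T)$.

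The main obstacle will be the orientation of the $K_2$ parts: I need to ensure that for every $K_1$ vertex $v$ there is at least one $K_2$-neighbor contributing a vertex to $D$ and at least one contributing to $D'$. Condition~(iii) guarantees two $K_2$-neighbors but does not by itself pin down which endpoint of each goes into $D$ versus $D'$. The careful step is therefore to orient each $K_2$ edge so that both $D$ and $D'$ remain dominating; here I would argue that since each $K_1$ vertex sees at least two distinct $K_2$ parts, I can always orient so that the two endpoints adjacent to $v$ land in opposite sets, or more robustly observe that a $K_1$ vertex adjacent to a $K_2$ part $\{a,b\}$ is adjacent to \emph{one} of $a,b$, and with two such parts it is adjacent to one vertex from each, which can be placed in different sets. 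Since $T$ is a tree (hence bipartite and free of conflicting cycles), a consistent global orientation exists, and I would invoke the acyclic structure to rule out any obstruction to choosing orientations simultaneously for all $K_1$ vertices.

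A cleaner way to finish, avoiding a delicate case analysis, is to note that the $K_2$ parts partition their vertex set into two dominating sets only if each $K_1$ vertex is adjacent to $K_2$-vertices on ``both sides'' of the matching. If a local orientation conflict arises, I would resolve it by swapping the orientation of one offending $K_2$ part and checking that this cannot create a new undominated $K_1$ vertex, using the fact that flipping a single edge only affects the domination of that edge's neighbors, each of which retains a second $K_2$-neighbor by Condition~(iii). This makes the orientation argument local and terminating. Once the orientation is fixed, the verification that $D,D'$ dominate $T$ and the size computation $|D|=|D'|=S(T)$ are routine, completing the proof.
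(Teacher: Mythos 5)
Your overall strategy coincides with the paper's: take the all-$K_1$/$K_2$ minimum-weight simple star partitioning from Lemma~\ref{lemma:K1K2partition}, split each $K_2$ part between $D$ and $D'$ using its edge as a matching edge, and reduce everything to choosing orientations so that every $K_1$ vertex sees both labels. You correctly identify that orientation consistency is the crux, but your resolution of it has a genuine gap. Your claim that flipping one offending $K_2$ part ``cannot create a new undominated $K_1$ vertex'' because affected $K_1$ vertices ``retain a second $K_2$-neighbor'' is false: suppose a $K_1$ vertex $w$ is adjacent to endpoint $a$ of the flipped part $\{a,b\}$ and to endpoint $c$ of another part, with $a \in D$ and $c \in D'$ before the flip; after the flip $a$ moves to $D'$ and $w$ now sees only $D'$-labeled vertices, so $w$ is no longer dominated by $D$. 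Having a second $K_2$-neighbor does not help when that neighbor carries the same label. Hence new conflicts can arise, the repair must propagate, and your assertion that the process is ``local and terminating'' is precisely the statement that needs proof. Likewise, your earlier appeal to the tree being ``bipartite and free of conflicting cycles'' to conclude that ``a consistent global orientation exists'' asserts the hard step rather than proving it: the constraints (each $K_1$ vertex must see both labels among its adjacent endpoints) form a 2-coloring problem over shared $K_2$ parts, and its satisfiability is not automatic.

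The missing idea --- and the way the paper closes exactly this gap --- is to flip at the level of components rather than single edges. The paper processes $K_1$ parts sequentially, and when a $K_1$ part $u$ ends up adjacent only to vertices labeled $D$, it picks an adjacent labeled vertex $v_1$ and swaps \emph{all} labels in the component $T_{v_1}$ of $T - u$ containing $v_1$. Because $T$ is a tree, the neighbors of $u$ lie in pairwise distinct components of $T-u$, and every labeled neighbor of any other vertex lies entirely within that vertex's own component (its only neighbor outside is possibly $u$ itself, which is unlabeled); hence a global swap inside $T_{v_1}$ preserves every previously ensured constraint, leaves $u$'s other labeled neighbors in $D$, and gives $u$ a neighbor in $D'$. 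This is where acyclicity is genuinely used, and it is the step your proposal needs. With it in place, the rest of your argument --- disjointness, the matching along $K_2$ edges, domination of $K_2$-vertices by their partners, and the count $|D|=|D'|=\omega(P)=S(T)$ --- goes through.
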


\begin{proof}  Let $T$ be a non-trivial weak tree and let $P$ be a minimum-weight simple star partitioning of $T$ such that all the star parts are $K_1$ parts or $K_2$ parts  (by Lemma \ref{lemma:K1K2partition} we know such partitioning exists).  We construct a swap set of cardinality $S(T)$, by associating, to the vertices in each $K_2$ part, a label of $D$ or $D'$.

Since $P$ is a simple star partitioning, every $K_1$ part is adjacent to at least two $K_2$ parts.  Let $u$ be a $K_1$ part (if one does not exist, then we skip to the last paragraph of the proof).  Considering the adjacent $K_2$ parts, if there are two neighbors $v_1,v_2$ of $u$ that have not yet been assigned to dominating sets, then we assign $v_1,v_2$ the labels of $D,D'$, respectively (the neighbors of $v_1,v_2$ in their respective $K_2$ parts are then labeled $D',D$, respectively).  If there is one neighbor $v_1$ that has not yet been assigned a label and at least one neighbor $v_2$, with an assigned label $D'$, then we assign $v_1$ a label of $D$ (and the neighbor in its $K_2$ part is labeled $D'$).  If other $K_1$ or $K_2$ parts are adjacent to $u$, then it is irrelevant to $u$, whether their vertices are labeled $D$ or $D'$.  We repeat for each $K_1$ part until all $K_1$ parts have been considered or there is a $K_1$ part that is only adjacent to vertices labeled, without loss of generality, $D$.

Suppose that $u$ is a $K_1$ part that is adjacent only to vertices labeled $D$.  Let $v_1$ be a vertex in an adjacent $K_2$ and let $T_{v_1}$ denote the tree containing $v_1$ induced by the deletion of $u$ from $T$.  We now swap the labels on all labeled vertices of $T_{v_1}$.  Now, $u$ is adjacent to a vertex labeled $D$ and a vertex labeled $D'$.

Finally, if the vertices of any $K_2$ parts remain unlabeled, one vertex is arbitrarily labeled $D$ and one is labeled $D'$.  The vertices labeled $D$ and $D'$ form disjoint dominating sets with a perfect matching.\end{proof}

The next corollaries follow directly from Proposition~\ref{prop1}, Lemma~\ref{lemma:fwd_sw_set_characterisation}, Lemma~\ref{subgraph}, and Theorem~\ref{lemma: Evicgeqsw}.

\begin{corollary}\label{theorem:characterisation_swap set}
A non-trivial tree $T$ has a swap set if and only if $T$ is a weak tree.
\end{corollary}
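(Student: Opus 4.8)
The plan is to prove the biconditional by combining the two directions already established in the excerpt, so the corollary is essentially a bookkeeping assembly of Lemma~\ref{lemma:fwd_sw_set_characterisation} and Theorem~\ref{lemma: Evicgeqsw}. For the forward direction, I would prove the contrapositive: if $T$ is not a weak tree, then $T$ is a strong tree (these are complementary by the definition of weak/strong graphs), and Lemma~\ref{lemma:fwd_sw_set_characterisation} gives $DD_{\textrm{m}}(T) = \infty$, so $T$ has no swap set. This handles ``has a swap set $\Rightarrow$ weak'' directly.

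For the reverse direction, suppose $T$ is a non-trivial weak tree. Then Theorem~\ref{lemma: Evicgeqsw} states outright that $T$ has a swap set (and moreover $DD_{\textrm{m}}(T) \leq S(T) < \infty$), which is exactly the conclusion needed. So the reverse implication is immediate. Putting the two directions together yields the stated characterization: a non-trivial tree $T$ has a swap set if and only if $T$ is a weak tree.

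I would structure the write-up as two short paragraphs, one per direction, each citing the relevant prior result. The only subtlety to flag is the logical equivalence between ``not weak'' and ``strong'' for a tree: the definition defines a weak graph as one with no strong stems and a strong graph as one with at least one strong stem, so every graph is exactly one of the two, and in particular the negation of weakness is strongness. This is the single point where one must be careful, but it is a definitional tautology rather than a genuine obstacle. There are no estimates or constructions to carry out here, since all the real work (the simple-star-partitioning construction producing an explicit swap set, and the matching-parity argument forcing $DD_{\textrm{m}} = \infty$ in the strong case) was done in the lemmas and theorem being invoked. Thus I do not anticipate any hard step; the proof is a direct consequence of the cited results, which is presumably why the authors present it as a corollary following immediately from them.
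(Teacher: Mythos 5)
Your proposal is correct and takes essentially the same route as the paper: the paper presents this corollary as following directly from Lemma~\ref{lemma:fwd_sw_set_characterisation} (strong $\Rightarrow$ $DD_{\textrm{m}}(T)=\infty$, hence no swap set) and Theorem~\ref{lemma: Evicgeqsw} (non-trivial weak tree $\Rightarrow$ swap set exists), exactly the two directions you assemble. Your explicit remark that ``not weak'' and ``strong'' are complementary by definition is the same definitional observation the paper relies on implicitly.
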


\begin{corollary}\label{cor}
For any non-trivial weak tree $T$, $DD_{\emph{m}}(T) = S(T) = e_{\emph{m}}^\infty(T)$.
\end{corollary}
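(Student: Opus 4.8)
The plan is to obtain the claimed equality by sandwiching $DD_{\textrm{m}}(T)$ between $e_{\textrm{m}}^\infty(T)$ from below and $S(T)$ from above, and then invoking the identity $S(T) = e_{\textrm{m}}^\infty(T)$ to collapse the chain. All of the substantive work has already been done in the preceding results, so the argument amounts to assembling three facts in the correct order.

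First I would record the upper bound. Since $T$ is a non-trivial weak tree, Theorem~\ref{lemma: Evicgeqsw} guarantees that $T$ has a swap set and that $DD_{\textrm{m}}(T) \leq S(T)$; this is exactly the content that required the constructive labeling argument built on a simple star partitioning consisting only of $K_1$ and $K_2$ parts (via Lemma~\ref{lemma:K1K2partition}). Next I would supply the lower bound: as observed in the introduction, any swap set is an $m$-eternal eviction set, so for every graph $G$ one has $DD_{\textrm{m}}(G) \geq e_{\textrm{m}}^\infty(G)$, and applying this to $T$ gives $DD_{\textrm{m}}(T) \geq e_{\textrm{m}}^\infty(T)$. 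Finally, Proposition~\ref{prop1} asserts $S(T) = e_{\textrm{m}}^\infty(T)$ for any tree with at least two vertices. Combining the three relations yields
\[
e_{\textrm{m}}^\infty(T) \leq DD_{\textrm{m}}(T) \leq S(T) = e_{\textrm{m}}^\infty(T),
\]
which forces equality throughout and establishes $DD_{\textrm{m}}(T) = S(T) = e_{\textrm{m}}^\infty(T)$.

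Because each ingredient is already in hand, there is no genuine obstacle here; the only point requiring care is checking that the hypotheses line up. Specifically, the weakness hypothesis on $T$ is precisely what Theorem~\ref{lemma: Evicgeqsw} needs to guarantee a \emph{finite} $DD_{\textrm{m}}(T)$ (and hence a meaningful upper bound), whereas Proposition~\ref{prop1} and the introductory inequality hold for all non-trivial trees without assuming weakness. This asymmetry is the substantive role of the hypothesis: by Lemma~\ref{lemma:fwd_sw_set_characterisation}, a strong tree has $DD_{\textrm{m}}(T) = \infty$, which would break the sandwich. Thus the corollary is best read as the conjunction of the general lower bound with the weak-tree-specific existence and upper-bound result, tied together by the star-partitioning identity.
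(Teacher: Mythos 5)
Your proof is correct and follows exactly the route the paper intends: the paper states that this corollary ``follows directly'' from Proposition~\ref{prop1} and Theorem~\ref{lemma: Evicgeqsw}, and your sandwich $e_{\textrm{m}}^\infty(T) \leq DD_{\textrm{m}}(T) \leq S(T) = e_{\textrm{m}}^\infty(T)$ (using the introduction's observation that every swap set is an eviction set) is precisely the implicit argument, spelled out. Your remark on the role of the weakness hypothesis is also accurate.
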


\begin{corollary}\label{corollary: SpanningTreesw}
If $G$ has a spanning tree $T'$ such that $T'$ is a weak tree then $DD_{\emph{m}}(G) \leq S(T')$.  \end{corollary}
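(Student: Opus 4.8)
The plan is to obtain the conclusion by composing the two results immediately preceding it, namely Theorem~\ref{lemma: Evicgeqsw} and Lemma~\ref{subgraph}. First, I would observe that $T'$ is, by hypothesis, a non-trivial weak tree (any spanning tree of a non-trivial graph has at least two vertices, and we are told $T'$ has no strong stems). Theorem~\ref{lemma: Evicgeqsw} then applies directly to $T'$ and yields two facts at once: $T'$ possesses a swap set, and its swap number satisfies $DD_{\textrm{m}}(T') \leq S(T')$.

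The second step is to transfer this bound from $T'$ to $G$. Since $T'$ is a spanning tree of $G$, it is in particular a spanning subgraph of $G$ on the same vertex set, with every edge of $T'$ present in $G$. Having just established that $T'$ has a swap set, the hypotheses of Lemma~\ref{subgraph} are met with $H = T'$, so I would invoke it to conclude $DD_{\textrm{m}}(G) \leq DD_{\textrm{m}}(T')$.

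Chaining the two inequalities gives $DD_{\textrm{m}}(G) \leq DD_{\textrm{m}}(T') \leq S(T')$, which is exactly the claim. There is no genuine computational obstacle here; the only points requiring care are bookkeeping ones, namely verifying that $T'$ satisfies the non-triviality and weakness hypotheses of Theorem~\ref{lemma: Evicgeqsw} so that the swap set is guaranteed to exist, and noting that a spanning tree qualifies as the spanning subgraph demanded by Lemma~\ref{subgraph}. Once both hypotheses are confirmed, the result follows immediately by transitivity of $\leq$.
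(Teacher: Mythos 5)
Your proof is correct and follows exactly the route the paper intends: the paper states that this corollary ``follows directly from'' Lemma~\ref{subgraph} and Theorem~\ref{lemma: Evicgeqsw}, which is precisely your composition of $DD_{\textrm{m}}(G) \leq DD_{\textrm{m}}(T') \leq S(T')$. Your additional care in checking that $T'$ is non-trivial and weak so that Theorem~\ref{lemma: Evicgeqsw} applies is a fine bookkeeping point that the paper leaves implicit.
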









\subsection{Independent Sets in Trees}\label{sec:ind}


In this section, we characterize the trees where $\alpha(T) = DD_{\textrm{m}}(T)$. We first exploit a result of Jou~\cite{Jou} to characterize trees for which $\gamma$, $e_{\textrm{m}}^\infty$, $DD_{\textrm{m}}$, $\alpha$ are all equal.  For a graph $G$, let $\widehat{G}$ be the graph obtained by adding a pendant (leaf) vertex adjacent to each vertex of $G$.  More formally, if $V(G) = \{v_1,v_2,\dots, v_n\}$, then $\widehat{G}$ is the graph with $V(\widehat{G}) = V(G) \cup \{u_1,u_2,\dots,u_n\}$ and $E(\widehat{G}) = E(G) \cup \{u_1v_1,u_2v_2,\dots,u_nv_n\}$.


\begin{theorem}\label{theorem: DomInCharact}\emph{\cite{Jou}} If $T$ is a non-trivial tree, then $\gamma(T) = \alpha(T)$ if and only if $T = \widehat {H}$ for some tree $H$ of order $|V(T)|/2$.
\end{theorem}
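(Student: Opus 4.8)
The plan is to prove both implications, dispatching the forward direction by a short counting argument and the converse by induction on $|V(T)|$. Throughout I will use the elementary fact that for any graph without isolated vertices $\gamma \le \alpha$, since every maximal independent set is a dominating set; in particular this holds for every tree on at least two vertices.

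For the forward direction, suppose $T = \widehat{H}$ with $|V(H)| = n$, so $|V(T)| = 2n$. The pendant edges joining each vertex of $H$ to its new leaf form a perfect matching of $T$, so any independent set meets each such edge at most once and $\alpha(T) \le n$; since the $n$ added leaves are pairwise non-adjacent, $\alpha(T) = n$. Dually, each of these $n$ disjoint pendant edges forces a dominating set to contain one of its endpoints, giving $\gamma(T) \ge n$, while $V(H)$ itself dominates $T$, so $\gamma(T) = n = \alpha(T)$.

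For the converse I induct on $|V(T)|$, the base case $T = K_2 = \widehat{K_1}$ being immediate. Take a longest path $v_0 v_1 v_2 \cdots v_k$ in $T$; then $v_0$ is a leaf and, by maximality, every neighbor of $v_1$ other than $v_2$ is a leaf. First I rule out $v_1$ being a strong stem. If $v_1$ had a second leaf $w$, then deleting $v_0$ cannot decrease the domination number: a dominating set of $T - v_0$ must contain $w$ or $v_1$, and after possibly replacing $w$ by $v_1$ it already dominates $v_0$, so $\gamma(T - v_0) \ge \gamma(T)$. On the other hand deleting this leaf drops the independence number by exactly one, so $\gamma(T - v_0) \le \alpha(T - v_0) = \alpha(T) - 1 = \gamma(T) - 1$, a contradiction. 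Hence $v_1$ is a weak stem of degree two whose unique leaf is $v_0$, and $v_2$ (being a non-leaf neighbor) has a further neighbor in $T$.

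Now set $T'' = T - \{v_0, v_1\}$, a tree on at least two vertices. Taking a maximum independent set of $T$ that contains $v_0$ (hence excludes $v_1$), and conversely extending one of $T''$ by $v_0$, yields $\alpha(T'') = \alpha(T) - 1$; adding $v_1$ to a minimum dominating set of $T''$ yields $\gamma(T) \le \gamma(T'') + 1$. Combining these with the universal inequality gives $\gamma(T'') \le \alpha(T'') = \alpha(T) - 1 = \gamma(T) - 1 \le \gamma(T'')$, so every inequality is tight: $\gamma(T'') = \alpha(T'')$ and $\gamma(T) = \gamma(T'') + 1$. By the inductive hypothesis $T'' = \widehat{H'}$ for a tree $H'$ with $|V(H')| = |V(T'')|/2 = \gamma(T) - 1$. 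The main obstacle is then the reconstruction: I must show $v_2$ is a vertex of $H'$ (a stem of $T''$) rather than one of the added leaves, for only then is $H = H' + v_1$, obtained by attaching $v_1$ as a pendant at $v_2$, a tree with $\widehat{H} = T$. I argue by contradiction using the collapse $\gamma(T) = |V(H')| + 1$. If instead $v_2$ were the pendant leaf of some stem $s \in V(H')$, then since $\deg_T(v_2) = 2$ the set $(V(H') \setminus \{s\}) \cup \{v_1\}$ already dominates $T$ — $v_1$ covers $v_0, v_1, v_2$, the stem $s$ is covered by one of its $H'$-neighbors, and every other vertex of $\widehat{H'}$ is covered as before — giving a dominating set of size $|V(H')| = \gamma(T) - 1$, a contradiction (the degenerate case $H' = K_1$, where $T'' = K_2$ is symmetric, is handled by simply choosing the labelling that makes $v_2$ the $H'$-vertex). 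Hence $v_2 \in V(H')$, and $H = H' + v_1$ is a tree with $\widehat{H} = T$ and $|V(H)| = |V(H')| + 1 = |V(T)|/2$, completing the induction.
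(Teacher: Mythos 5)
Your proof is correct, but note that the paper contains no proof to compare it with: this theorem is quoted directly from Jou \cite{Jou} and used as a black box, so your argument is a genuine addition rather than an alternative to something in the text. Both directions check out. The forward direction is the standard double count against the perfect matching of pendant edges. The converse is a leaf-peeling induction along a longest path $v_0v_1\cdots v_k$: the strong-stem elimination is sound (swapping the second leaf $w$ for $v_1$ gives $\gamma(T-v_0)\ge\gamma(T)$, which contradicts $\gamma(T-v_0)\le\alpha(T-v_0)=\alpha(T)-1=\gamma(T)-1$), the inheritance of $\gamma=\alpha$ by $T''=T-\{v_0,v_1\}$ follows from the collapsing chain $\gamma(T'')\le\alpha(T'')=\alpha(T)-1=\gamma(T)-1\le\gamma(T'')$, and you correctly identify and close the one delicate point in the reconstruction: if $v_2$ were an added pendant of a stem $s$ of $T''=\widehat{H'}$, then $(V(H')\setminus\{s\})\cup\{v_1\}$ would dominate $T$ with only $\gamma(T)-1$ vertices, a contradiction, so $v_2\in V(H')$ and, setting $H=H'+v_1$, one gets $T=\widehat{H}$ (the symmetric case $T''=K_2$ is rightly settled by choosing the labelling). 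What your route buys is self-containedness: the paper's corollary that $\gamma$, $e_{\textrm{m}}^\infty$, $DD_{\textrm{m}}$, and $\alpha$ coincide exactly for the coronas $\widehat{H}$ currently rests on an external citation, and your induction, which uses only elementary leaf/stem swap arguments of the same flavor as the paper's other tree proofs, would remove that dependence.
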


\begin{corollary}
If $T$ is a non-trivial tree then $\gamma(T) = e_{\emph{m}}^\infty(T) = DD_{\emph{m}}(T)= \alpha(T)$ if and only if $T = \widehat {H}$ for some tree $H$ of order $|V(T)|/2$.
\end{corollary}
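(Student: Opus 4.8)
The proof splits into the two directions of the biconditional, and the reverse direction is essentially immediate. If $\gamma(T) = e_{\textrm{m}}^\infty(T) = DD_{\textrm{m}}(T) = \alpha(T)$, then in particular $\gamma(T) = \alpha(T)$, so Theorem~\ref{theorem: DomInCharact} yields $T = \widehat{H}$ for some tree $H$ of order $|V(T)|/2$. No further work is needed here; all the content lies in the forward direction.

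For the forward direction I would start from $T = \widehat{H}$ with $|V(H)| = n = |V(T)|/2$ and first pin down $\gamma(T)$ and $\alpha(T)$. The $n$ pendant leaves of $\widehat{H}$ form an independent set, so $\alpha(T) \geq n$; the $n$ stems (the copies of $V(H)$) form a dominating set, so $\gamma(T) \leq n$. Since a maximum independent set is maximal and hence dominating, $\gamma(T) \leq \alpha(T)$ in general; combined with Theorem~\ref{theorem: DomInCharact}, which gives $\gamma(T) = \alpha(T)$, this forces $\gamma(T) = \alpha(T) = n$.

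It then remains to squeeze $e_{\textrm{m}}^\infty(T)$ and $DD_{\textrm{m}}(T)$ between these values. Since guards always occupy a dominating set we have $\gamma(T) \leq e_{\textrm{m}}^\infty(T)$, and since every swap set is an $m$-eternal eviction set we have $e_{\textrm{m}}^\infty(T) \leq DD_{\textrm{m}}(T)$, as noted in Section~\ref{sec:intro}. For the matching upper bound I would exhibit an explicit swap set of cardinality $n$: take $D$ to be the set of stems and $D'$ the set of leaves, with the perfect matching pairing each stem $v_i$ to its own pendant $u_i$. Both $D$ and $D'$ are dominating, so $DD_{\textrm{m}}(T) \leq n$. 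Stringing these together gives $n = \gamma(T) \leq e_{\textrm{m}}^\infty(T) \leq DD_{\textrm{m}}(T) \leq n = \alpha(T)$, so all four parameters equal $n$.

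The argument is short and I do not anticipate a genuine obstacle; the one point requiring care is verifying that the leaf set $D'$ is indeed a dominating set (each leaf dominates its own stem and itself, so every stem is covered) and that $\widehat{H}$ is a weak tree, so that Corollary~\ref{cor} is consistent with the explicit construction and the swap set genuinely exists. Everything else is a sandwiching argument resting on Theorem~\ref{theorem: DomInCharact}.
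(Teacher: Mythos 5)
Your proof is correct, but it takes a different route from the paper's for the forward direction. The paper's proof is two lines long and leans entirely on its star-partitioning machinery: it notes that every vertex of $H$ is a weak stem, so $T$ is a weak tree, whence Corollary~\ref{cor} gives $e_{\textrm{m}}^\infty(T) = DD_{\textrm{m}}(T)$ and Corollary~\ref{corollary:swleqAlph} gives $DD_{\textrm{m}}(T) \leq \alpha(T)$; together with $\gamma(T) \leq DD_{\textrm{m}}(T)$ (by definition) and Jou's equality $\gamma(T)=\alpha(T)$ from Theorem~\ref{theorem: DomInCharact}, the sandwich closes. You instead bypass the star-partitioning results entirely: you pin down $\gamma(T)=\alpha(T)=|V(T)|/2$ directly, use the generic chain $\gamma \leq e_{\textrm{m}}^\infty \leq DD_{\textrm{m}}$, and cap it with an explicit swap set --- stems as $D$, pendant leaves as $D'$, matched along the pendant edges --- giving $DD_{\textrm{m}}(T) \leq |V(T)|/2$. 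What your approach buys is self-containedness (no appeal to Proposition~\ref{prop1}, Theorem~\ref{lemma: Evicgeqsw}, or Theorem~\ref{theorem: alphagreaterthanE}) and the explicit common value $|V(T)|/2$ of all four parameters; what the paper's buys is brevity, since those corollaries were already established and do the work in one step. Both proofs use Jou's theorem identically for the (immediate) reverse direction.
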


\begin{proof} Note that all vertices in $H$ are weak stems in $T$ therefore $T$ is a weak tree and $e^\infty_m(T) = DD_{\textrm{m}}(T)$ by Corollary~\ref{cor}. From Corollary~\ref{corollary:swleqAlph} we know that $DD_{\textrm{m}}(T)\leq \alpha(T)$ and by definition $\gamma(T)\leq DD_{\textrm{m}}(T)$.
\end{proof}

We next state a result from~\cite{EvictionOnTrees} that will be useful later in this section.

\begin{theorem}\label{theorem: alphagreaterthanE}\emph{\cite{EvictionOnTrees}} Let $G$ be a connected graph. Then $e_{\emph{m}}^\infty(G) \leq \alpha(G).$ \end{theorem}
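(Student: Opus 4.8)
The plan is to prove the (cited) bound by exhibiting an explicit defender strategy that uses only $\alpha(G)$ guards and survives every infinite sequence of eviction attacks; since $e_{\textrm{m}}^\infty(G)$ is by definition the least number of guards that suffices, producing such a strategy gives $e_{\textrm{m}}^\infty(G) \leq \alpha(G)$ immediately. I would initialise the guards on a maximum independent set $I$ with $|I| = \alpha(G)$. Since $I$ is maximal it is dominating, so the initial configuration is a legal eviction configuration, and connectivity guarantees no guard is isolated.

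Throughout the game I would maintain the invariant that the guard set $D$ is a dominating set of size $\alpha(G)$ that contains some maximal independent set $J$ of $G$ as a subset (initially $J = I = D$). The subset $J$ is what certifies domination, while the remaining $\alpha(G) - |J|$ guards are \emph{spare} and provide the freedom needed to move. The response to an eviction attack on a guarded vertex $v$ splits into two cases. If $v$ is a spare guard, then $J$ still dominates once $v$ leaves, so it suffices to slide the guard on $v$ to a neighbour, swapping it with the nearest spare guard or routing it toward a currently unguarded vertex if its target is occupied. If $v \in J$, I would first choose a maximal independent set $J'$ with $v \notin J'$; such a set exists because, by connectivity, $v$ has a neighbour $w$, and any maximal independent set containing $w$ avoids $v$ while still dominating it. I would then move the guards to a new dominating set $D' \supseteq J'$ with $v \notin D'$, preserving the invariant.

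The crux, and the step I expect to be the main obstacle, is verifying that each reconfiguration can be realised by a \emph{single} simultaneous all-guards move: the model forces every guard to step to a vertex of its closed neighbourhood $N[\cdot]$ and forbids two guards from sharing a vertex, so I must exhibit a perfect matching between the old positions $D$ and the new positions $D'$ in which matched vertices are equal or adjacent. I would build this matching using Hall's theorem, exploiting the independence of $J$ and $J'$ to control the relevant neighbourhoods and using the spare guards as the slack that absorbs the displacement created when the guard at $v$ is forced off. The hardest instance is when $|J| = \alpha(G)$ (no spares) and the attacked vertex lies in $J$, forcing a one-step transition toward another large maximal independent set; the star $K_{1,n}$ illustrates why spare guards are indispensable, since there a maximum independent set cannot be transformed into another of the same size in one step, and the defender must instead keep a ``core'' guard on the centre while shuttling a leaf guard into the current hole. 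Once this reconfiguration lemma is established, the invariant is restored after every attack, the resulting configuration is again dominating and excludes the attacked vertex, and the strategy therefore sustains indefinitely, yielding $e_{\textrm{m}}^\infty(G) \leq \alpha(G)$.
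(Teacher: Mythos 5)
First, be aware that the paper does not prove this statement at all: it is imported verbatim from \cite{EvictionOnTrees}, so there is no internal proof to compare against and your proposal must stand on its own. It does not. You correctly identify the natural strategy (start on a maximum independent set; maintain a dominating configuration of size $\alpha(G)$ containing a maximal independent set $J$), and you correctly identify the crux: every response must be realized by a single simultaneous move, i.e., a perfect matching between old and new positions using only edges and loops of $G$. But you never prove this crux, and the prescription you give for it is false as stated. In your main case you take an \emph{arbitrary} maximal independent set $J'$ containing a neighbour $w$ of the attacked vertex $v$ and then appeal to Hall's theorem; Hall's condition can genuinely fail for such a $J'$. Concretely, let $T$ be the tree with edges $vw$, $wc$, $cd$, $dx_1$, $dx_2$, $x_1y_1$, $y_1z_1$, $x_2y_2$, $y_2z_2$. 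Then $\alpha(T)=6$: the set $\{v,c,x_1,x_2,z_1,z_2\}$ is independent, while $\{w,d,y_1,y_2\}$ is dominating and the complement of a maximum independent set dominates, so $\alpha(T)\leq 10-4=6$. The configuration $D=\{v,w,c,d,z_1,z_2\}$ satisfies your invariant: $|D|=6$, $D$ dominates, and $D$ contains the maximal independent set $J=\{v,d,z_1,z_2\}$. Attack $v\in J$; its unique neighbour is $w$, and $J'=\{w,x_1,x_2,z_1,z_2\}$ is a maximal independent set containing $w$. However, $N[x_1]\cap D=N[x_2]\cap D=\{d\}$, so $x_1$ and $x_2$ both compete for the single guard at $d$, and no legal one-step move reaches \emph{any} $D'\supseteq J'$. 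Since your induction hypothesis is exactly the invariant, the induction step must succeed from every invariant configuration under every choice your prescription allows, and it does not: the new independent set has to be chosen by a careful local-exchange argument relative to the current guard positions, and that argument --- which is the entire content of the theorem --- is missing.

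A smaller but real error: in your spare-guard case you suggest ``swapping it with the nearest spare guard.'' A swap puts a guard back on $v$, which is precisely what the eviction model forbids; only your alternative --- shifting guards one step each along a path of occupied vertices from $v$ to a nearest unoccupied vertex --- is legal, and that chain move does preserve your invariant since $J\subseteq D\setminus\{v\}$. In summary, the skeleton resembles how bounds of this type are proved, but the reconfiguration lemma you defer to ``Hall's theorem'' is both unproven and, as formulated, false, so the proposal does not constitute a proof.
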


\begin{corollary}\label{corollary:swleqAlph} Let $T$ be a non-trivial weak tree. Then $DD_{\emph{m}}(T)\leq \alpha(T).$
\end{corollary}

Corollary \ref{corollary:swleqAlph}  follows immediately from Corollary~\ref{cor} and Theorem~\ref{theorem: alphagreaterthanE} \cite{EvictionOnTrees}, but it does not give us any insight into when the parameters $DD_{\textrm{m}}$ and $\alpha$ are equal. Additionally, in \cite{EvictionOnTrees}, the authors asked the following.

\begin{question}\label{q} (Question 4  in~\cite{EvictionOnTrees})

(i) Can we characterize the graphs with $e_{\emph{m}}^\infty$ equal to $\gamma$?

(ii) Can we characterize the graphs with $e_{\emph{m}}^\infty$ equal to $\alpha$? \end{question}

Question~\ref{q} is answered affirmatively for trees in~\cite{KlosterMyn}.   In particular, in answering (ii), the authors showed that for any non-trivial tree $T$, $e_{\textrm{m}}^\infty(T) = \alpha(T)$ if and only if $T$ has a minimum-weight special star partitioning containing no $K_1$ parts.  Consider again, the tree $T$ given in Figure~\ref{fig}.  Using Definition~\ref{defn}, which was stated in both~\cite{EvictionOnTrees} and~\cite{KlosterMyn}, we find $s(T) = 5$.  Following (iii) of Definition~\ref{defn}, each leaf forms a $K_2$ part with its stem (indicated by dotted lines in Figure~\ref{fig}).  Following (ii) of Definition~\ref{defn}, the vertices of degree three cannot both be $K_1$ parts.  Thus, they must form a $K_2$ part together and $s(T)=5$.  As the four leaves along with one vertex of degree $3$ form an independent set, we see $\alpha(T)=5$.  However, as shown in Section~\ref{sec:intro}, $e_{\textrm{m}}^\infty(T) = 4$, which forms a contradiction.  However, using the {\it simple} star partitioning definition given in Definition~\ref{defn1}, the proof and results of~\cite{KlosterMyn} holds.

\begin{theorem}\label{tt} For any non-trivial tree $T$, $e_{\emph{m}}^\infty(T) = \alpha(T)$ if and only if $T$ has a minimum-weight simple star partitioning containing no $K_1$ parts. \end{theorem}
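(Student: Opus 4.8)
The plan is to prove the sharper identity
\[
\alpha(T) \;=\; \min_{P}\bigl(w(P) + q(P)\bigr),
\]
where the minimum ranges over all simple star partitionings $P$ of $T$, $w(P)$ denotes the weight of $P$, and $q(P)$ denotes the number of $K_1$ parts of $P$; the theorem then falls out immediately. Indeed, by Proposition~\ref{prop1} we have $e_{\textrm{m}}^\infty(T) = S(T) = \min_P w(P)$, so once the identity is established, $e_{\textrm{m}}^\infty(T) = \alpha(T)$ holds exactly when $\min_P\bigl(w(P)+q(P)\bigr) = \min_P w(P)$. Since $w(P) \geq S(T)$ and $q(P) \geq 0$ for every $P$, the two minima coincide precisely when some $P$ attains $w(P) = S(T)$ together with $q(P) = 0$, i.e.\ precisely when $T$ has a minimum-weight simple star partitioning with no $K_1$ parts.

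For the easy inequality $\alpha(T) \leq \min_P(w(P)+q(P))$, I would fix any simple star partitioning $P$ and any independent set $I$ of $T$. A part of $P$ of order $k$ induces $K_{1,k-1}$, whose independence number is $k-1$ when $k \geq 2$ and $1$ when $k = 1$; hence $|I \cap S| \leq \max(1,k-1)$ for each part $S$. Summing over the parts gives $|I| \leq \sum_{\text{non-}K_1}(k_i-1) + q(P) = w(P)+q(P)$, so $\alpha(T) \leq w(P)+q(P)$ for every $P$. (Specializing to a $P$ with no $K_1$ parts recovers the inequality of Theorem~\ref{theorem: alphagreaterthanE} in this setting.)

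The substance of the proof is the matching lower bound: constructing a single simple star partitioning $P^\star$ with $w(P^\star)+q(P^\star) = \alpha(T)$. First I would take a maximum independent set $A$ chosen to contain every leaf of $T$ (a standard exchange argument shows such an $A$ exists, forcing every stem into $B := V \setminus A$), so that $B$ is a minimum vertex cover. Because $T$ is bipartite, K\"onig's theorem guarantees a maximum matching $M$ whose size equals $|B|$; a count of endpoints shows that $M$ saturates $B$ and matches each $b \in B$ to a distinct neighbour in $A$. I would then adjust $M$ so that each weak stem is matched to its pendant leaf (the re-routing is possible since that leaf's only neighbour is the stem). The partitioning $P^\star$ takes the vertices of $B$ as star centers, assigning to each center $b$ the star $\{b\} \cup (\text{pendant leaves of } b) \cup \{M(b)\}$, and declares every remaining (necessarily non-leaf) vertex of $A$ a $K_1$ part.

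The hard part is checking that $P^\star$ is genuinely a \emph{simple} star partitioning and that the count works out. Conditions~(i) and (ii) of Definition~\ref{defn1} hold by construction: each part is a star, and each weak stem sits in a $K_2$ with its leaf; condition~(iii) holds because every $K_1$ part is a non-leaf vertex of $A$, hence has degree at least two with all its neighbours in $B$, i.e.\ all its neighbours are star centers (non-$K_1$ parts). Finally, each part of $P^\star$ that is not a $K_1$ contributes its leaves---which are exactly the $A$-vertices assigned to centers---so $w(P^\star)$ counts the assigned $A$-vertices while $q(P^\star)$ counts the unassigned ones, giving $w(P^\star)+q(P^\star) = |A| = \alpha(T)$, as needed. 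The delicate points are the insistence that $A$ contain all leaves and that $M$ match weak stems to their leaves: together these prevent any leaf of $T$ from becoming a degree-one $K_1$ part in violation of condition~(iii), which is where a naive construction would break.
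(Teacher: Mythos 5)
Your argument is correct in substance but takes a genuinely different route from the paper. The paper obtains Theorem~\ref{tt} by observing that the proof in~\cite{KlosterMyn} survives once Definition~\ref{defn} is replaced by Definition~\ref{defn1}, and then supplies an alternate proof through weak reductions: Lemma~\ref{lemma:K1K2partition} (minimum-weight partitionings of weak trees need only $K_1$ and $K_2$ parts), Lemma~\ref{theorem: TreeswomStrogTrees} (how deleting extra leaves at strong stems changes $S$), and a bipartite/parity argument culminating in Theorem~\ref{lemma:fwd-AlphaswCharc-v2} and Corollary~\ref{corr}, which phrase the characterization as $S(T') = |V(T')|/2$ for the weak reduction $T'$; the direction $S(T) \leq \alpha(T)$ is imported from Theorem~\ref{theorem: alphagreaterthanE}. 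You instead prove the exact formula $\alpha(T) = \min_P\bigl(w(P)+q(P)\bigr)$ head-on: the upper bound on $\alpha$ by per-part counting of independent vertices (essentially the same counting the paper uses inside Theorem~\ref{lemma:fwd-AlphaswCharc-v2}), and the lower bound by an explicit partitioning built from a minimum vertex cover $B$ and a K\"onig matching saturating $B$, with strong stems simply absorbed as centers of larger stars. This buys you a self-contained proof that needs no weak reduction, no reduction to $K_1$/$K_2$ parts, and no appeal to the eviction inequality of~\cite{EvictionOnTrees} (only Proposition~\ref{prop1}), and the identity itself is a sharper statement from which the theorem falls out formally.

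Two repairs are needed, neither fatal. First, your opening step --- a maximum independent set containing every leaf --- does not exist for $T = K_2$, which is a legitimate non-trivial tree: its two vertices are adjacent leaves. The identity holds there trivially (condition (ii) of Definition~\ref{defn1} forces the unique simple star partitioning to be a single $K_2$ part, so $w+q = 1 = \alpha$), but you must dispose of this case separately, since the exchange argument you invoke is valid only for $|V(T)| \geq 3$, where the leaves form an independent set. Second, your parenthetical claim that the easy inequality ``recovers'' Theorem~\ref{theorem: alphagreaterthanE} is backwards: specializing to a partitioning $P$ with no $K_1$ parts yields $\alpha(T) \leq w(P)$, an \emph{upper} bound on $\alpha(T)$, whereas Theorem~\ref{theorem: alphagreaterthanE} asserts the lower bound $e_{\textrm{m}}^\infty(T) \leq \alpha(T)$; the latter follows from your identity only once both directions are established. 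The core construction is sound: the partition structure, the verification of conditions (i)--(iii) of Definition~\ref{defn1} (in particular that every $K_1$ part is a non-leaf vertex of $A$ whose neighbours are all star centers), and the count $w(P^\star)+q(P^\star) = |A|$ are all correct.
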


We later provide an alternate proof of this theorem, while also proving that for any non-trivial tree $T$, $\alpha(T)=DD_{\textrm{m}}(T)$ if and only if $S(T) = |V(T)|/2$.

\begin{definition}
Let the \textbf{weak reduction} of a tree $T$ be the induced subgraph of $T$ obtained by removing all but one leaf from each strong stem. \end{definition}

\begin{lemma}\label{theorem: TreeswomStrogTrees}
Let $T'$ be a weak reduction of tree $T$.  Then $S(T) = S(T') + |V(T)\backslash V(T')|$.
\end{lemma}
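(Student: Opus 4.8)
The plan is to prove $S(T) = S(T') + |V(T)\backslash V(T')|$ by two inequalities, exploiting the structure of strong stems under weak reduction. First I would set up notation: let $s_1, s_2, \dots, s_t$ be the strong stems of $T$, and for each $s_i$ let $L_i$ denote its set of leaf-neighbors in $T$, with $|L_i| = d_i \geq 2$. The weak reduction $T'$ keeps exactly one leaf $\ell_i \in L_i$ per strong stem and deletes the rest, so $|V(T)\backslash V(T')| = \sum_{i=1}^t (d_i - 1)$. A key preliminary observation is that after reduction, each $s_i$ becomes a weak stem in $T'$ (it now has exactly one leaf-neighbor $\ell_i$), so by Condition (ii) of Definition~\ref{defn1}, in \emph{any} simple star partitioning of $T'$ the pair $\{s_i, \ell_i\}$ must form a $K_2$ part.

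For the inequality $S(T) \leq S(T') + \sum_i(d_i-1)$, I would take a minimum-weight simple star partitioning $P'$ of $T'$ and extend it to a simple star partitioning $P$ of $T$ by absorbing all the deleted leaves. Since $\{s_i, \ell_i\}$ is a $K_2$ part in $P'$, I would enlarge this part to the star $\{s_i\} \cup L_i$ centered at $s_i$ (a $K_{1,d_i}$), which adds weight $d_i - 1$ per strong stem; every other part of $P'$ is carried over unchanged. I must verify $P$ is still a \emph{simple} star partitioning of $T$: Condition (i) holds since $\{s_i\}\cup L_i$ induces a star; Condition (ii) holds because the only vertices of $T$ adjacent to exactly one leaf are the weak stems, and these are unaffected since I only modified parts centered at strong stems; Condition (iii) on $K_1$ parts is inherited from $P'$ because the newly added leaves are not $K_1$ parts and the adjacencies of existing $K_1$ parts are preserved. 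This yields $S(T) \leq \omega(P) = \omega(P') + \sum_i(d_i-1) = S(T') + |V(T)\backslash V(T')|$.

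For the reverse inequality $S(T) \geq S(T') + \sum_i(d_i-1)$, I would take a minimum-weight simple star partitioning $P$ of $T$ and show that each strong stem $s_i$ together with \emph{all} of $L_i$ must lie in a single star part of $P$. Indeed, each $\ell \in L_i$ is a leaf, so in $P$ it is either a $K_1$ part or belongs to a star part containing its unique neighbor $s_i$. A leaf cannot be a $K_1$ part: by Condition (iii) a $K_1$ part needs two non-$K_1$ neighbors, but a leaf has only one neighbor. Hence every $\ell \in L_i$ shares its part with $s_i$, forcing the part containing $s_i$ to be exactly the star $\{s_i\}\cup L_i$ of weight $d_i - 1$. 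Contracting each such star back down to the $K_2$ part $\{s_i, \ell_i\}$ (and deleting the other leaves) produces a simple star partitioning $P'$ of $T'$ of weight $\omega(P) - \sum_i(d_i-1)$; I would check that $P'$ satisfies the three conditions on $T'$, again using that only the strong-stem parts were altered. Then $S(T') \leq \omega(P') = S(T) - \sum_i(d_i - 1)$, giving the matching bound.

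The main obstacle I expect is the bookkeeping in verifying Condition (iii) in both directions, specifically ensuring that modifying the strong-stem parts does not disturb the ``two non-$K_1$ neighbors'' requirement for the \emph{other} $K_1$ parts of the partitioning, and confirming no new leaves or stems are inadvertently created or destroyed by the reduction. One subtlety worth isolating is the edge case where a strong stem $s_i$ is adjacent to another strong stem, or where the reduced tree $T'$ could become trivial or disconnected --- though $T'$ stays connected since weak reduction only deletes leaves, and I would note that $s_i$ remaining a weak stem in $T'$ relies on $s_i$ having at least one non-leaf neighbor or on $\ell_i$ being its sole remaining degree-one neighbor. Once these structural checks are dispatched, the two inequalities combine to give the stated equality.
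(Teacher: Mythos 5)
Your first inequality ($S(T) \leq S(T') + |V(T)\backslash V(T')|$) is correct and matches the paper's argument essentially verbatim: the paper also starts from a minimum-weight simple star partitioning of $T'$, notes that Condition (ii) of Definition~\ref{defn1} forces each pair $\{s_i,\ell_i\}$ to be a $K_2$ part, and inflates that part to the star on $\{s_i\}\cup L_i$, gaining weight $d_i-1$ per strong stem.

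The gap is in your reverse inequality, at the words ``forcing the part containing $s_i$ to be \emph{exactly} the star $\{s_i\}\cup L_i$.'' What you actually prove is only containment: every leaf in $L_i$ must lie in $s_i$'s part. Nothing prevents that part from also containing non-leaf neighbors of $s_i$, even in a minimum-weight partitioning. Concretely, let $T$ have vertices $u_1,u_2,v,w,z,x,y_1,y_2$ and edges $vu_1,vu_2,vw,wz,zx,xy_1,xy_2$ (two strong stems $v,x$ joined by a path through $w,z$). Then $\{v,u_1,u_2,w\},\ \{z\},\ \{x,y_1,y_2\}$ is a simple star partitioning of weight $5$, and $5$ is minimum: a partitioning into four parts would force $\{w\}$ and $\{z\}$ to be adjacent $K_1$ parts, each with only one non-$K_1$ neighboring part, violating Condition (iii). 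Here $v$'s part strictly contains $\{v\}\cup L_v$, so your contraction step is ill-defined on this $P$: shrinking $\{v,u_1,u_2,w\}$ to $\{v,u_1\}$ orphans $w$, which cannot become a $K_1$ part (its only non-$K_1$ neighboring part would be $\{v,u_1\}$), while keeping $w$ gives the part $\{v,u_1,w\}$, which violates Condition (ii) in $T'$, where $v$ is now a weak stem. Repairing this forces a restructuring away from the stem (here, merging $w$ and $z$ into a $K_2$), i.e., an exchange argument in the spirit of Lemma~\ref{lemma:K1K2partition} showing that \emph{some} minimum-weight partitioning of $T$ has each strong-stem part equal to the stem plus its leaves; that argument is the missing content. (A minor slip as well: the star on $\{s_i\}\cup L_i$ has weight $d_i$, not $d_i-1$; the \emph{drop} upon contraction is $d_i-1$, so your final count is still consistent.) For comparison, the paper's proof explicitly allows the stem's part to be $K_{1,\ell+m}$ with $m$ extra non-leaf neighbors and keeps those vertices in the reduced part $K_{1,m+1}$ --- exactly the case your argument skips --- though even that construction needs the same additional care, since a $K_{1,m+1}$ part at a now-weak stem also clashes with Condition (ii) when $m\geq 1$; this reverse direction is genuinely the delicate half of the lemma.
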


\begin{proof} If $T \cong T'$, the result follows trivially.   Let $T$ be a tree with strong stem $v$ and $\ell > 1$ adjacent leaves, $u_1,u_2,\dots,u_\ell$.

First, let $P$ be a minimum-weight simple star partitioning of $T$.  By Definition~\ref{defn1}, $v$ must form the center of a star $S$ in $P$ and vertices $u_1,u_2,\dots,u_\ell$ must form leaves of $S$.  Note that $S$ may also contain other vertices of $N(v)$; thus $S \cong K_{1,\ell+m}$ where $m$ is the number of vertices of $N(v) \backslash \{u_1,\dots,u_\ell\}$ in $S$.  Let $T^\star$ be the tree induced by deleting $u_2,\dots,u_\ell$ from $T$; observe that by replacing $S$ with $S^\star$ where $V(S^\star) = V(S) \backslash \{u_2,\dots,u_\ell\}$, we find $S^\star \cong K_{1,m+1}$ and obtain a simple star partitioning $P^\star$ of $T^\star$.  Then $S(T^\star) \leq S(T) - w(S)+w(S^\star) = S(T)-(\ell+m)+(m+1) = S(T) - (\ell-1)$.  Repeating this argument for each strong stem of $T$ leads to $S(T') \leq S(T) - |V(T)\backslash V(T')|$.

Next, let $P'$ be a minimum-weight simple star partitioning of $T'$.  By Definition~\ref{defn1} (ii), strong stem $v$ must form a $K_2$ part with leaf $u_1$ in $T'$.  Let $T^\diamond$ be the tree with $V(T^\diamond) = V(T') \cup \{u_2,u_3, \dots,u_\ell\}$ and $E(T^\diamond) = E(T') \cup \{vu_2,vu_3,\dots,vu_\ell\}$.  We impose the partitioning $P'$ on $T^\diamond$ with the following modification: instead of $v$ forming a $K_2$ part with $u_1$,  $v$ will form the center of a $K_{1,\ell}$ star part that includes $u_1,u_2,\dots, u_\ell$.  Observe that this forms a simple star partitioning on $T^\diamond$.  Therefore $S(T^\diamond) \leq S(T') + (\ell-1)$.  Repeating this argument for each strong stem of $T$ leads to $S(T) \leq S(T') + |V(T)\backslash V(T')|$.
\end{proof}

\begin{theorem} \label{lemma:fwd-AlphaswCharc-v2} Let $T'$ be a weak reduction of non-trivial tree $T$.  Then $S(T') = |V(T')|/2$ if and only if $\alpha(T) = S(T)$.\end{theorem}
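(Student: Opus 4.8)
The plan is to reduce everything to the weak tree $T'$ by establishing, alongside the weight identity $S(T) = S(T') + |V(T)\setminus V(T')|$ of Lemma~\ref{theorem: TreeswomStrogTrees}, the companion identity $\alpha(T) = \alpha(T') + |V(T)\setminus V(T')|$. Granting both, the additive term cancels and we get $\alpha(T)=S(T)$ if and only if $\alpha(T')=S(T')$; since $S(T')=|V(T')|/2$ is a statement purely about $T'$, the theorem then follows once we settle the weak case $\alpha(T')=S(T') \iff S(T')=|V(T')|/2$.

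First I would prove the $\alpha$-identity by an exchange argument at a single strong stem $v$ with leaves $u_1,\dots,u_\ell$, where $\ell\ge 2$ (the general case follows by doing this at every strong stem, the operations being local and disjoint). For the upper bound, take a maximum independent set $I$ of $T$: since each $u_j$ has $v$ as its only neighbor, if $v\in I$ then replacing $v$ by $u_1,\dots,u_\ell$ strictly enlarges $I$, a contradiction, so $v\notin I$; and if some $u_j\notin I$ then $I\cup\{u_j\}$ is independent, again contradicting maximality, so all leaves lie in $I$. Hence $I\cap V(T')$ is independent in $T'$ and $|I| = |I\cap V(T')| + (\ell-1)$, giving $\alpha(T)\le \alpha(T')+|V(T)\setminus V(T')|$. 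For the reverse inequality, take a maximum independent set $I'$ of $T'$; a symmetric argument (if $u_1\notin I'$ then either enlarge $I'$ or swap $v$ for $u_1$) lets me assume $u_1\in I'$ and $v\notin I'$, after which re-adding the deleted leaves $u_2,\dots,u_\ell$ preserves independence and yields $\alpha(T)\ge \alpha(T')+|V(T)\setminus V(T')|$.

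It remains to handle the weak tree $T'$, where I would avoid any case analysis on the number of $K_1$ parts. By Lemma~\ref{lemma:K1K2partition}, $T'$ has a minimum-weight simple star partitioning using only $K_1$ and $K_2$ parts; its weight is $S(T')$, and since a $K_1/K_2$ partitioning has weight equal to its number of $K_2$ parts, it contains exactly $S(T')$ $K_2$ parts, which form a matching of size $S(T')$ in $T'$. Writing $n=|V(T')|$ and letting $\mu$ denote the maximum matching number, this gives $\mu \ge S(T')$, while trivially $\mu\le \lfloor n/2\rfloor$. Since $T'$ is a tree, hence bipartite, Gallai's identity together with K\"onig's theorem gives $\alpha(T')=n-\mu$, so $\lceil n/2\rceil \le \alpha(T') \le n - S(T')$. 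If $S(T')=n/2$ both bounds equal $S(T')$, forcing $\alpha(T')=S(T')$; conversely, $\alpha(T')=S(T')$ forces $S(T')\le n-S(T')$ and $S(T')\ge\lceil n/2\rceil\ge n/2$, whence $S(T')=n/2$. This proves $\alpha(T')=S(T')\iff S(T')=|V(T')|/2$ and completes the argument.

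The main obstacle I expect is the $\alpha$-identity of the first step: it is the only place requiring real care, since I must argue that a maximum independent set of $T$ can be taken to omit every strong stem and contain all of its leaves, and that the local exchanges at distinct stems do not interfere. By contrast, the weak case is short once one observes that the $K_2$ parts of the partitioning from Lemma~\ref{lemma:K1K2partition} supply the matching lower bound $\mu\ge S(T')$, so that the two elementary matching bounds squeeze $\alpha(T')$ between $\lceil n/2\rceil$ and $n-S(T')$.
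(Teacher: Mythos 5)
Your proof is correct, and it differs from the paper's in two substantive ways. First, you isolate the companion identity $\alpha(T)=\alpha(T')+|V(T)\setminus V(T')|$ and prove it by stem--leaf exchanges; the paper never states this identity, instead performing an analogous exchange only inside the contrapositive direction (growing a large independent set of $T$ from one of $T'$), and handling the forward direction by a separate pigeonhole count over a star partitioning of $T$ extended from the all-$K_2$ partitioning of $T'$. Your version makes the cancellation of $|V(T)\setminus V(T')|$ against Lemma~\ref{theorem: TreeswomStrogTrees} explicit and reduces the whole statement cleanly to the weak tree $T'$. Second, and more significantly, on the weak tree you replace the paper's appeal to the eviction machinery --- the paper obtains $\alpha(T)\geq S(T)$ from Proposition~\ref{prop1} together with Theorem~\ref{theorem: alphagreaterthanE} --- with classical matching theory: the $K_2$ parts of the partitioning from Lemma~\ref{lemma:K1K2partition} form a matching, so $\mu(T')\geq S(T')$, and K\"onig--Egerv\'ary plus the Gallai identity give $\alpha(T')=|V(T')|-\mu(T')$, squeezing $\alpha(T')$ between $\lceil |V(T')|/2\rceil$ and $|V(T')|-S(T')$. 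What each approach buys: yours is self-contained modulo classical results and entirely independent of the $m$-eternal eviction model (in particular it does not lean on the corrected Proposition 28 of~\cite{EvictionOnTrees}, whose original form the paper had to repair), while the paper's argument stays inside the star-partitioning/eviction framework, which is exactly what lets it read off Corollary~\ref{corr} relating $\alpha$ to $e_{\textrm{m}}^\infty$ with no extra work. Both proofs share the same structural backbone, namely Lemma~\ref{lemma:K1K2partition} and Lemma~\ref{theorem: TreeswomStrogTrees}.
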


\begin{proof} First suppose $S(T')=|V(T')|/2$. Then Lemma~\ref{lemma:K1K2partition} implies there is a minimum-weight simple star partitioning $P'$ of $T'$ such that every vertex is in a $K_2$ part. From the proof of Lemma~\ref{theorem: TreeswomStrogTrees}, we can extend $P'$ in $T'$ to a simple star partitioning $P$ of $T$ such all vertices are in a $K_2$ part, except for the strong stems and their adjacent leaves which form a $K_{1,n}$ part with $n\geq 2$.  If $\alpha(T)\geq S(T)+1$, there must be a star part $S$ in $P$ such that $S$ has at least $|V(S)|$ independent vertices, which is a contradiction since no two independent vertices can be adjacent.  Therefore $\alpha(T) \leq S(T)$.  Proposition~\ref{prop1} and Theorem~\ref{theorem: alphagreaterthanE} imply $\alpha(T)\geq S(T)$, so if $S(T')=|V(T')|/2$, then $\alpha(T)=S(T)$.

We next suppose $S(T') \not =|V(T')|/2$ and prove that $\alpha(T) \not = S(T)$.  Since $T'$ is a weak tree, $S(T') \leq |V(T')|/2$ by  Lemma~\ref{lemma:K1K2partition}.  Assume $S(T') < |V(T')|/2$ and by Lemma~\ref{theorem: TreeswomStrogTrees}, $S(T)<|V(T')|/2+|V(T)\backslash V(T')|$.  Since $T$ is a tree, it is necessarily bipartite: let $X_T$, $Y_T$ be partite sets of $T$.   As $T'$ is a weak reduction of $T$, it is an induced subgraph of $T$.  Therefore, for some integer $k$, $T'$ contains $k$ vertices from $X_T$ and $|V(T')|-k$ vertices from $Y_T$.  Since $k + |V(T')|-k = |V(T')|$, at least one of $X_T,Y_T$ must contain $|V(T')|/2$ vertices.  Thus, $T'$ contains an independent set $I$ of cardinality at least $|V(T')|/2$.

If $T \cong T'$, then a contradiction has been obtained as $\alpha(T) \geq |V(T)|/2 > S(T)$.  Otherwise, consider a strong stem $u$ in $T$.  If $u \notin I$, we add all leaf neighbors of $u$ to $I$ (in this case, one leaf neighbor of $u$ existed in $T'$ and was already in $I$) and $I$ remains an independent set in $T$.   If $u \in I$, then we remove $u$ from $I$ and add all leaves of $u$ (in $T$) to $I$ and $I$ remains an independent set in $T$.  As a result, $\alpha(G) \geq |V(T')/2|+|V(T)\backslash V(T')| > S(T)$ and have achieved the desired contradiction.
\end{proof}

The next corollary follows immediately from Proposition~\ref{prop1} and Theorem~\ref{lemma:fwd-AlphaswCharc-v2}.

\begin{corollary}\label{corr}
Let $T'$ be a weak reduction of non-trivial tree $T$. Then $\alpha(T) = e_{\emph{m}}^\infty(T)$ if and only if  $S(T') = |V(T')|/2$.  \end{corollary}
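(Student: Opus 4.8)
The plan is to derive Corollary~\ref{corr} by combining the equivalence established in Theorem~\ref{lemma:fwd-AlphaswCharc-v2} with the identity from Proposition~\ref{prop1}. The statement to prove is that for a weak reduction $T'$ of a non-trivial tree $T$, we have $\alpha(T) = e_{\textrm{m}}^\infty(T)$ if and only if $S(T') = |V(T')|/2$. The key observation is that Proposition~\ref{prop1} gives $S(T) = e_{\textrm{m}}^\infty(T)$ for any tree with at least two vertices, so the parameter $e_{\textrm{m}}^\infty(T)$ appearing in the corollary can be replaced by $S(T)$ wherever convenient.

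Concretely, I would first invoke Proposition~\ref{prop1} to rewrite the left-hand condition: since $T$ is a non-trivial tree, $e_{\textrm{m}}^\infty(T) = S(T)$, and therefore the statement ``$\alpha(T) = e_{\textrm{m}}^\infty(T)$'' is literally the same as the statement ``$\alpha(T) = S(T)$''. Next I would apply Theorem~\ref{lemma:fwd-AlphaswCharc-v2}, which asserts precisely that $S(T') = |V(T')|/2$ if and only if $\alpha(T) = S(T)$, for $T'$ a weak reduction of the non-trivial tree $T$. Chaining these two equivalences gives $\alpha(T) = e_{\textrm{m}}^\infty(T) \iff \alpha(T) = S(T) \iff S(T') = |V(T')|/2$, which is exactly the claim.

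Since both Proposition~\ref{prop1} and Theorem~\ref{lemma:fwd-AlphaswCharc-v2} are already established in the excerpt and may be assumed, the proof is a direct substitution of one equality into the other, with essentially no new content beyond checking that the hypotheses line up (namely that $T$ is non-trivial so Proposition~\ref{prop1} applies, and that $T'$ is a weak reduction of $T$ so Theorem~\ref{lemma:fwd-AlphaswCharc-v2} applies). There is no genuine obstacle here; the only point requiring a moment's care is confirming that the tree $T$ in Proposition~\ref{prop1} and the tree $T$ whose weak reduction is $T'$ are the same object, so that the two quoted results refer to a common $S(T)$ and $\alpha(T)$. Once that alignment is noted, the biconditional follows immediately, and I would write the argument in a single short paragraph consisting of the substitution described above.
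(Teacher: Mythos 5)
Your proof is correct and matches the paper exactly: the paper derives this corollary by the same two-step substitution, citing Proposition~\ref{prop1} to identify $e_{\textrm{m}}^\infty(T)$ with $S(T)$ and then invoking the equivalence of Theorem~\ref{lemma:fwd-AlphaswCharc-v2}. Nothing is missing; your check that both results apply to the same non-trivial tree $T$ is the only hypothesis verification needed.
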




\begin{theorem}\label{theorem:bwd-AlphaswCharc} For any non-trivial tree $T$, $\alpha(T) = DD_{\emph{m}}(T)$ if and only if  $T$ is a weak tree and $S(T) = |V(T)|/2$.
\end{theorem}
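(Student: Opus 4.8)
The plan is to prove the biconditional by bolting together the results already established for weak trees, with the single pivotal observation that \emph{a weak tree is its own weak reduction}. Indeed, the weak reduction removes all but one leaf from each strong stem, so if $T$ has no strong stems (i.e., $T$ is weak) then nothing is deleted and $T' \cong T$. This lets me feed $T' = T$ into Theorem~\ref{lemma:fwd-AlphaswCharc-v2}, collapsing its statement to the clean equivalence ``$S(T) = |V(T)|/2$ if and only if $\alpha(T) = S(T)$'' whenever $T$ is weak. The other two ingredients are Corollary~\ref{cor}, which gives $DD_{\textrm{m}}(T) = S(T)$ for weak trees, and Corollary~\ref{theorem:characterisation_swap set} (equivalently Lemma~\ref{lemma:fwd_sw_set_characterisation}), which ties finiteness of $DD_{\textrm{m}}$ to weakness.

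For the forward direction, I would assume $\alpha(T) = DD_{\textrm{m}}(T)$. Since $T$ is a finite tree, $\alpha(T) \leq |V(T)| < \infty$, so $DD_{\textrm{m}}(T)$ is finite and hence $T$ admits a swap set; by Corollary~\ref{theorem:characterisation_swap set} this forces $T$ to be weak, establishing half of the right-hand side. Weakness then invokes Corollary~\ref{cor} to give $DD_{\textrm{m}}(T) = S(T)$, whence $\alpha(T) = S(T)$. Because $T$ is weak it equals its own weak reduction, so Theorem~\ref{lemma:fwd-AlphaswCharc-v2} applied with $T' = T$ upgrades $\alpha(T) = S(T)$ into $S(T) = |V(T)|/2$, completing this direction.

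For the converse, I would assume $T$ is weak and $S(T) = |V(T)|/2$. Again using $T' \cong T$, Theorem~\ref{lemma:fwd-AlphaswCharc-v2} turns $S(T) = |V(T)|/2$ into $\alpha(T) = S(T)$, while Corollary~\ref{cor} supplies $DD_{\textrm{m}}(T) = S(T)$; chaining these yields $\alpha(T) = DD_{\textrm{m}}(T)$. The main point requiring care is not any calculation but the bookkeeping: verifying that the finiteness of $\alpha(T)$ legitimately forces weakness in the forward direction (so that the two conditions on the right genuinely appear as a conjunction rather than one implying the other), and confirming that the weak-reduction hypothesis of Theorem~\ref{lemma:fwd-AlphaswCharc-v2} is met on the nose by taking $T' = T$. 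Once that identification is in place, the proof is a short assembly of prior corollaries with no genuine obstacle remaining.
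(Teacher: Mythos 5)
Your proposal is correct and follows essentially the same route as the paper: both directions are assembled from Corollary~\ref{cor}, Theorem~\ref{lemma:fwd-AlphaswCharc-v2} applied with $T' = T$ (since a weak tree is its own weak reduction), and the fact that strong trees have $DD_{\textrm{m}}(T) = \infty$. The only cosmetic difference is that the paper argues the forward direction by contrapositive while you argue it directly via finiteness of $\alpha(T)$; these are logically equivalent uses of the same ingredients.
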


\begin{proof}  Suppose $T$ is a weak tree and $S(T)=|V(T)|/2$.  By Corollary~\ref{cor} and Theorem~\ref{lemma:fwd-AlphaswCharc-v2}, $DD_{\textrm{m}}(T)=S(T)=\alpha(T)$.

For the other direction, we consider the contrapositive and prove:  if $T$ is a strong tree or $S(T) \neq |V(T)|/2$ then $\alpha(T) \neq DD_{\textrm{m}}(T)$.  If $T$ is a strong tree then clearly $\alpha(T)\not=DD_{\textrm{m}}(T)$ since $DD_{\textrm{m}}(T) = \infty$ and $\alpha(T)$ is always finite.  Therefore, we assume $T$ is not a strong tree (i.e., $T$ is a weak tree) and $S(T) \neq |V(T)|/2$.  By Theorem~\ref{lemma:fwd-AlphaswCharc-v2}, $\alpha(T) \neq S(T)$.  Since $T$ is a weak tree, by Corollary~\ref{cor}, $DD_{\textrm{m}}(T) = S(T)$.  Thus, $DD_{\textrm{m}}(T) \neq \alpha(T)$.\end{proof}

\section{The Cartesian Product}\label{sec:products}


\subsection{General Results}\label{subsec:prod}

Graph products are powerful tools in graph theory, and are concerned with
taking two (or more) graphs and generating new ones.
Cartesian products are one of the most studied graph products. The \emph{Cartesian} (or \emph{box}) \emph{product} of $G$ and $H,$ written $G \by H$, has vertex set $V(G)\times V(H).$ Vertices $(a,b)$ and $(c,d)$ are adjacent if $a=c
$ and $bd\in E(H),$ or $ac\in E(G)$ and $b=d.$ See \cite{ik} for further background on the Cartesian and other graph products. Several unresolved and central conjectures focus on graph products, such as those of Vizing on the domination number of Cartesian products \cite{viz,viz2}.

\begin{figure}[h]
\[\includegraphics[width=0.6\textwidth]{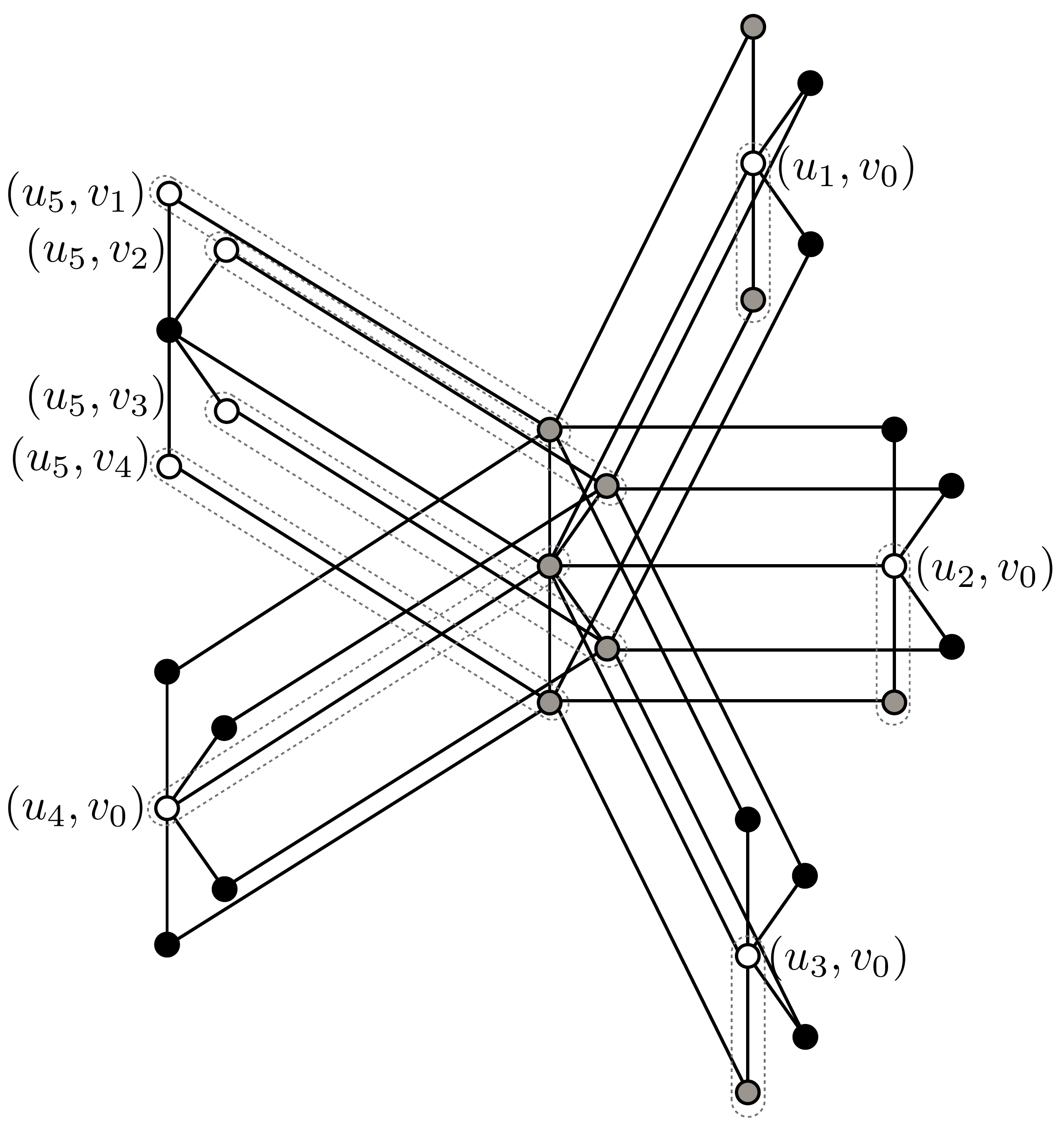}\]

\caption{A swap set for $K_{1,5} \by K_{1,4}$.}

\label{fig:prod}
\end{figure}

Consider, for example, the Cartesian product of stars $K_{1,5}$ and $K_{1,4}$, as shown in Figure~\ref{fig:prod}.  Certainly no swap set exists for either $K_{1,5}$ or $K_{1,4}$, but in the product, disjoint dominating sets with a perfect matching exist and are illustrated with white and gray vertices in Figure~\ref{fig:prod}.  Let $D$ and $D'$ denote the set of white and gray vertices, respectively.  This configuration of sets $D$ and $D'$ is exploited to find $DD_{\textrm{m}}$ for products of stars in Theorem~\ref{thm:star}, which will later be used to prove that for any graphs $G$ and $H$ having at least two vertices each, a swap set exists for $G \by H$ regardless of whether swap sets exist for $G$ and $H$.

\begin{theorem}\label{thm:star}  Let $p,q \geq 1$.  Then $DD_{\emph{m}}(K_{1,p} \by K_{1,q}) = p+q-1$. \end{theorem}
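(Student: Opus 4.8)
The plan is to fix coordinates for $G := K_{1,p}\by K_{1,q}$. Write $c_1$ for the center and $a_1,\dots,a_p$ for the leaves of $K_{1,p}$, and $c_2,b_1,\dots,b_q$ likewise for $K_{1,q}$. I would single out three types of vertices of the product: the center $o=(c_1,c_2)$ (degree $p+q$); the \emph{arm} vertices $R_i=(a_i,c_2)$ and $C_j=(c_1,b_j)$; and the \emph{outer} vertices $(a_i,b_j)$, each of degree exactly two with $N((a_i,b_j))=\{R_i,C_j\}$. The adjacencies I would record are: $o$ is adjacent to every $R_i$ and every $C_j$ (and to no outer vertex); $R_i$ is adjacent to $o$ and to $(a_i,b_1),\dots,(a_i,b_q)$; and $C_j$ is adjacent to $o$ and to $(a_1,b_j),\dots,(a_p,b_j)$. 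These let one track domination cell-by-cell.

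For the lower bound, the key observation is that a perfect matching between $D$ and $D'$ forces $|D|=|D'|$, and since $D\cap D'=\emptyset$ the center $o$ lies in at most one of them; hence at least one set, say $D'$, avoids $o$. I would then prove the self-contained claim that any dominating set $S$ of $G$ with $o\notin S$ satisfies $|S|\ge p+q-1$. Let $\rho=\{i:R_i\in S\}$ and $\kappa=\{j:C_j\in S\}$, with $x=|\rho|$, $y=|\kappa|$. Since $o\notin S$, an outer vertex $(a_i,b_j)$ with $i\notin\rho$ and $j\notin\kappa$ can only be dominated by itself, so every such cell lies in $S$; moreover, for $i\notin\rho$ the vertex $R_i$ can only be dominated by some $(a_i,b_j)\in S$, and symmetrically for $j\notin\kappa$. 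In the generic case $\rho\neq[p]$, $\kappa\neq[q]$ this gives $|S|\ge x+y+(p-x)(q-y)$, and writing $a=p-x\ge1$, $b=q-y\ge1$ the right-hand side equals $(p+q-1)+(a-1)(b-1)\ge p+q-1$; in the degenerate cases $\rho=[p]$ or $\kappa=[q]$ the arm-domination requirement forces one outer vertex per missing line and yields $|S|\ge p+q$. Thus $|S|\ge p+q-1$ in all cases, whence $DD_{\textrm{m}}(G)=|D|=|D'|\ge p+q-1$.

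For the matching upper bound I would exhibit the pair explicitly (assume $q\ge2$; the case $p\ge2$ is symmetric). Take $D=\{o\}\cup\{R_1,\dots,R_p\}\cup\{(a_1,b_2),\dots,(a_1,b_{q-1})\}$ and $D'=\{C_1,\dots,C_{q-1}\}\cup\{(a_1,b_q),\dots,(a_p,b_q)\}$, each of cardinality $p+q-1$. They are disjoint by inspection. $D$ dominates $G$ since it contains $o$ together with every $R_i$. $D'$ dominates $G$ because $o$ is dominated by $C_1$, the vertex $C_q$ by $(a_1,b_q)$, each $R_i$ by $(a_i,b_q)$, and every outer vertex through its own column $C_j$ (for $j\le q-1$) or through $(a_i,b_q)$ (for column $q$). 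The matching pairs $(a_i,b_q)$ with $R_i$ for $1\le i\le p$, pairs $C_1$ with $o$, and pairs $C_j$ with $(a_1,b_j)$ for $2\le j\le q-1$; each pair is an edge and together they saturate both sets, giving $DD_{\textrm{m}}(G)\le p+q-1$.

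The step I expect to be the main obstacle is the lower bound's treatment of the arm vertices: the naive ``each outer cell needs two of the three vertices of its closed neighbourhood occupied'' estimate is too weak, and one must additionally charge for the domination of $o$, the $R_i$, and the $C_j$ by the centerless set --- it is precisely these constraints that rule out the cheap $\gamma$-sized dominating sets and that necessitate the separate degenerate-case analysis above. I would also flag the boundary behaviour: the construction requires $\max(p,q)\ge2$, and when $q=1$ (symmetrically $p=1$) one instead uses two complementary centerless sets, one choosing $R_i$ and the other $(a_i,b_1)$ for each $i$ and matched along the degree-two edges; only $p=q=1$ (where $G=C_4$) lies outside all of these arguments and would be checked by hand.
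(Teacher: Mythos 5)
Your proof is correct in substance, and its lower bound takes a cleaner route than the paper's. The paper proves the bound by decomposing $K_{1,p}\by K_{1,q}$ into the $p+1$ columns $G_i\cong K_{1,q}$, assuming without loss of generality that the centre $(u_0,v_0)\notin D$, and running a case analysis on how many columns meet $D$ once versus at least twice, closing several cases by comparing against the already-established upper bound of $p+q-1$. You instead isolate the self-contained claim that \emph{every} dominating set avoiding the centre has size at least $p+q-1$, and prove it symmetrically in rows and columns via the identity $x+y+(p-x)(q-y)=(p+q-1)+(p-x-1)(q-y-1)$, with the two degenerate cases ($\rho=[p]$ or $\kappa=[q]$) giving $p+q$. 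The underlying idea is the same (charge a centerless dominating set for arm vertices and for the outer vertices they force), but your version avoids the paper's column bookkeeping and does not lean on the upper bound, which is a genuine simplification. Your upper-bound construction --- centre plus all arms on one side plus outer padding, against the other side's arms minus one plus a full outer line, matched along arm--outer and centre--arm edges --- is essentially the paper's construction with the roles of the two factors exchanged, and your verification of domination, disjointness, and the matching is sound.

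Two caveats. First, the auxiliary construction you offer for $q=1$ is wrong: $\{R_1,\dots,R_p\}$ does not dominate $C_1$ (no $R_i$ is adjacent to $C_1$, since they differ in both coordinates), and $\{(a_1,b_1),\dots,(a_p,b_1)\}$ does not dominate $o$ (the centre has no outer neighbours). This does not break your proof, because your main construction together with the declared $p\leftrightarrow q$ symmetry already covers every case with $\max(p,q)\ge 2$; the remark is redundant and should be deleted rather than repaired. Second, for $p=q=1$ the theorem's formula is in fact false: $DD_{\textrm{m}}(C_4)=2\neq 1=p+q-1$. This is a defect of the paper itself --- its proof records $DD_{\textrm{m}}(C_4)=2$ and then restricts to $p\ge 2$, never reconciling this with the stated formula --- so your hand-check of that case would expose an error in the statement rather than confirm it; you were right to flag the case, but should say explicitly that the formula fails there.
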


\begin{proof}  If $p=q=1$ then $K_{1,p} \by K_{1,q} \cong P_2 \by P_2 \cong C_4$ and obviously, $DD_{\textrm{m}}(C_4) =2$. Therefore, assume $p \geq 2$ and $q \geq 1$.   Let $u_0$, $v_0$ denote the centers of $K_{1,p}$, $K_{1,q}$, respectively, and label the leaves of $K_{1,p}$ and $K_{1,q}$ as $u_1,u_2,\dots,u_p$ and $v_1,v_2,\dots, v_q$, respectively.  We first prove $DD_{\textrm{m}}(K_{1,p} \by K_{1,q}) \leq p+q-1$.  In $K_{1,p} \by K_{1,q}$, we let $$D = \bigcup_{i=1}^{p-1} (u_i,v_0) \cup \bigcup_{i=1}^q (u_p,v_i) \text{~~~and~~~} D' = \bigcup_{i=0}^q (u_0,v_i) \cup \bigcup_{i=1}^{p-2} (u_i,v_q).$$

We first show $(u_a,v_b)$ is dominated by the vertices of $D$.   Observe that $(u_a,v_b)$ is equal or adjacent to $(u_a,v_0) \in D$ if $1 \leq a \leq p-1$ and $0 \leq b \leq q$.   If $a=0$, then $(u_a,v_b)$ is adjacent to $(u_p,v_b) \in D$ if $1 \leq b \leq q$ and adjacent to $(u_1,v_0) \in D$ if $b=0$.  If $a=p$, then $(u_a,v_b) \in D$ if $1 \leq b \leq q$ and adjacent to $(u_p,v_1) \in D$ if $b=0$.  Thus, $D$ dominates $K_{1,p} \by K_{1,q}$.

Note that $(u_a,v_b)$ is equal or adjacent to $(u_0,v_b) \in D'$ if $0 \leq a \leq p$, $0 \leq b \leq q$.  Thus $D'$ dominates $K_{1,p} \by K_{1,q}$.  Observe $D \cap D' = \emptyset$.  Thus, $D,D'$ are disjoint dominating sets.  Finally, we provide a perfect matching between $D$ and $D'$.

For $1 \leq i \leq p-2$, vertex $(u_i,v_0) \in D$ is matched with $(u_i,v_q) \in D'$; vertex $(u_{p-1},v_0) \in D$ is matched with $(u_0,v_0) \in D'$; and for $1 \leq i \leq q$, vertex $(u_p,v_i) \in D$ is matched with $(u_0,v_i) \in D'$.  By inspection, all vertices of $D$ and $D'$ have been matched and note $|D| = |D'| = p+q-1$.\\


We next prove $DD_{\textrm{m}}(K_{1,p} \by K_{1,q}) \geq p+q-1$. For $0 \leq i \leq p$, let $G_i$ denote the subgraph of $K_{1,p} \by K_{1,q}$ induced by vertices $\{(u_i,v_j)~:~ 0 \leq j \leq q\}$; then $G_i \cong K_{1,q}$.  Let $D$, $D'$ be arbitrary disjoint dominating sets with a perfect matching between them such that $|D|=|D'|=DD_{\textrm{m}}(K_{1,p} \by K_{1,q})$.  Since $u_0$ is the center of $K_{1,p}$ and $v_0$ is the center of $K_{1,q}$, we focus on vertex $(u_0,v_0) \in K_{1,p} \by K_{1,q}$.  Observe that $(u_0,v_0)$ is in at most one of $D$, $D'$ (though it may be in neither).  Therefore, without loss of generality, suppose $(u_0,v_0) \notin D$.


If $(u_0,v_i) \in D$ for each $i \in \{1,2,\dots, q\}$, then $G_k$ contains a vertex of $D$ for each $k \in \{1,2,\dots, p\}$ (otherwise $D$ does not dominate $(u_k,v_0)$).  In this case, $|D| \geq p+q$, which exceeds the upper bound obtained in the first part of the proof.

Therefore, excluding $(u_0,v_0)$, $\lambda$ vertices of $G_0$ are not in $D$ for some $\lambda\in \{1,2,\dots,q\}$.
Without loss of generality, denote these $\lambda$ vertices by $(u_0,v_1),(u_0,v_2),\dots,(u_0,v_\lambda)$.  Then for each $i \in \{1,2,\dots, \lambda\}$, $\exists~ j_i \in \{1,2,\dots,p\}$ such that $(u_{j_i},v_i) \in D$ (otherwise $D$ does not dominate $(u_0,v_i)$).  Then $\cup_{i=1}^\lambda (u_{j_i},v_i) \subseteq V(G_{j_1}) \cup V(G_{j_2}) \dots \cup V(G_{j_p})$.  As it may be that $j_i = j_k$ for $i \neq k$, without loss of generality, let $G_1,G_2,\dots,G_x$ denote the distinct subgraphs of $G_{j_1},G_{j_2},\dots,G_{j_p}$.  As $\lambda \geq 1$, we know that $x \geq 1$.

First, let us assume $x=1$.  Then $(u_1,v_1),(u_1,v_2),\dots, (u_1,v_\lambda) \in D$.  For each $i \in \{2,3,\dots,p\}$, at least one vertex of $G_i$ must be in $D$ (otherwise, as $(u_0,v_0)\notin D$, vertex $(u_i,v_0)$ is not dominated by $D$).  Then $|D| \geq (q-\lambda) + \lambda+(p-1) = p+q-1$ since $q-\lambda$ vertices of $G_0$ are in $D$.

Second, let us assume $x>1$.  For some $k \in \{1,2,\dots,x\}$, suppose $G_k$ contains only one vertex of $D$ (recall $(u_k,v_i) \in D$ for some $i \in \{1,2,\dots, \lambda\}$).  Then for $(u_k,v_j)$, $j \neq i$, to be dominated by $D$, it must be that $(u_0,v_j) \in D$. In this case, $q-1$ vertices of $G_0$ must be in $D$ (else the vertices of $G_k$ are not dominated by $D$) and $|D| \geq p+q$ which exceeds the upper bound obtained in the first part of the proof.  Therefore, for each $k \in \{1,2,\dots,x\}$, $G_k$ must contain at least two vertices of $D$.   Finally, for each $k \in \{x+1,x+2,\dots, p\}$, $G_k$ must contain a vertex of $D$ (otherwise $(u_k,v_0)$ is not dominated by $D$ since $(u_0,v_0) \notin D$).  In this case, $|D| \geq 2x + (q-x) +(p-x) = p+q$ (since there are $q-\lambda$ vertices of $G_0$ in $D$) which exceeds the upper bound obtained in the first part of the proof. Therefore, $|D| \geq p+q-1$.\end{proof}

\smallskip

The next result, Theorem~\ref{thm:treeprod}, shows that for any two non-trivial trees $T$ and $T'$, the product $T \by T'$ has a swap set.  In fact, by the previous result, Theorem~\ref{thm:treeprod} is exact for the product of two stars.  We introduce some notation that will be used in the proof of Theorem~\ref{thm:treeprod}:  for any $X \subseteq V(G)$, let $G[X]$ denote the subgraph of $G$ induced by $X$.  Let $P$ be a partitioning of the vertices of a non-trivial tree $T$ such that each part induces a star of order at least $2$.  Denote by $x_P$, the number of parts in $P$ and by $\ell_P$, the number of leaves in the largest star part in $P$.

\begin{theorem}\label{thm:treeprod} Let $T$ and $T'$ be non-trivial trees. Let $P$ and $P'$ be a partitioning of the vertices of $T$  and $T'$, respectively, such that each part induces a star of order at least $2$.  Then $DD_{\emph{m}}(T \by T') \leq x_{P} x_{P'}(\ell_P+\ell_{P'}-1)$.  \end{theorem}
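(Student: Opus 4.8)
The plan is to exploit the fact that the two partitionings $P$ and $P'$ induce a tiling of $T \by T'$ by subgraphs each isomorphic to a product of two stars, and then to apply Theorem~\ref{thm:star} block by block. Write the parts of $P$ as $S_1,\dots,S_{x_P}$, where $S_i \cong K_{1,p_i}$ with $1 \le p_i \le \ell_P$, and the parts of $P'$ as $S_1',\dots,S_{x_{P'}}'$, where $S_j' \cong K_{1,q_j}$ with $1 \le q_j \le \ell_{P'}$ (each $p_i,q_j \ge 1$ since every part has order at least $2$). For each pair $(i,j)$ set $B_{ij} = V(S_i) \times V(S_j')$. Since $P$ and $P'$ are partitions, the blocks $\{B_{ij}\}$ partition $V(T \by T')$. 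Crucially, because each part induces a star in its respective tree, the induced subgraphs satisfy $T[V(S_i)] = S_i$ and $T'[V(S_j')] = S_j'$, and hence the subgraph of $T \by T'$ induced by $B_{ij}$ is precisely $S_i \by S_j' \cong K_{1,p_i} \by K_{1,q_j}$.

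By Theorem~\ref{thm:star}, each block $K_{1,p_i} \by K_{1,q_j}$ admits disjoint dominating sets $D_{ij}, D_{ij}'$ (taken inside the induced subgraph on $B_{ij}$) with a perfect matching between them and with $|D_{ij}| = |D_{ij}'| = p_i + q_j - 1$. I would then set $D = \bigcup_{i,j} D_{ij}$ and $D' = \bigcup_{i,j} D_{ij}'$ and verify that these form a swap set of $T \by T'$. Disjointness is immediate: within a block the two sets are disjoint by construction, and vertices of distinct blocks lie in disjoint vertex sets. The desired perfect matching is the union of the block matchings; as the blocks are vertex-disjoint, this union is a perfect matching between $D$ and $D'$ in $T \by T'$.

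The one point that genuinely needs care --- and the main (indeed essentially only) obstacle --- is global domination. I would argue that $D$ dominates $T \by T'$ as follows: an arbitrary vertex $w$ lies in a unique block $B_{ij}$, and $D_{ij}$ dominates every vertex of $B_{ij}$ within $(T \by T')[B_{ij}]$; since every edge of this induced subgraph is also an edge of $T \by T'$, the vertex $w$ is equal or adjacent (in $T \by T'$) to a vertex of $D_{ij} \subseteq D$. The same reasoning gives that $D'$ is dominating, and the edges running between distinct blocks can only help, so they are never required. Hence $D$ and $D'$ are disjoint dominating sets of $T \by T'$ with a perfect matching between them, i.e.\ a swap set.

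Finally, I would bound the cardinality. Since $|D| = \sum_{i=1}^{x_P}\sum_{j=1}^{x_{P'}} (p_i + q_j - 1)$ and each of the $x_P x_{P'}$ summands satisfies $p_i + q_j - 1 \le \ell_P + \ell_{P'} - 1$ (using $p_i \le \ell_P$ and $q_j \le \ell_{P'}$), we obtain $DD_{\textrm{m}}(T \by T') \le |D| \le x_P x_{P'}(\ell_P + \ell_{P'} - 1)$, as claimed. All steps beyond the block-wise domination check are routine bookkeeping once the observation that each block induces a product of two stars is in place, so I expect the write-up to be short.
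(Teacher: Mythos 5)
Your proof is correct and takes essentially the same approach as the paper: both tile $T \by T'$ by vertex-disjoint blocks isomorphic to products of star parts and apply Theorem~\ref{thm:star} to each block. The only difference is organizational---the paper reaches the same decomposition in two nested stages via Lemma~\ref{subgraph} (first bounding $DD_{\textrm{m}}(T \by K_{1,\ell_{P'}})$, then $DD_{\textrm{m}}(T \by T')$), whereas you tile into the blocks $S_i \by S_j'$ in a single step and verify the disjoint-union argument inline.
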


\begin{proof} Let $P$ ($P'$) be a partitioning of the vertices of non-trivial tree $T$ ($T')$ such that each part induces a star of order at least $2$.  Let $S_1,S_2,\dots,S_{x_P}$ denote the star parts of $P$ and $S_1',S_2',\dots,S_{x_{P'}}'$ denote the star parts of $P'$.  By Theorem~\ref{thm:star}, for each $i \in \{1,2,\dots, x_p\}$, $$DD_{\textrm{m}}(S_i \by K_{1,\ell_{P'}}) = (|S_i|-1) + \ell_{P'}-1 \leq \ell_P + \ell_{P'}-1.$$  As the disjoint union $G[S_1 \by K_{1,\ell_{P'}}]+G[S_2 \by K_{1,\ell_{P'}}]+\dots+G[S_{x_P} \by K_{1,\ell_{P'}}]$ is a spanning subgraph of $T \by K_{1,\ell_{P'}}$, by Lemma~\ref{subgraph}, $DD_{\textrm{m}}(T \by K_{1,\ell_{P'}}) \leq x_P \cdot  (\ell_P + \ell_{P'}-1)$.

Similarly, the disjoint union $G[T \by S_1']+G[T \by S_2']+\dots+G[T \by S_{x_{P'}}']$ is a spanning subgraph of $T \by T'$ and $DD_{\textrm{m}}(T \by S_i') \leq x_P \cdot (\ell_P + \ell_{P'}-1)$.  Thus, by Lemma~\ref{subgraph}, $DD_{\textrm{m}}(T \by T') \leq x_P \cdot x_{P'} \cdot  (\ell_P + \ell_{P'}-1).$\end{proof}

Suppose $G$ and $H$ are non-trivial graphs (i.e. each has at least two vertices). Let $T$ and $T'$ be spanning trees of $G$  and $H$, respectively.  As $T \by T'$ is a spanning subgraph of $G \by H$,  by Lemma~\ref{subgraph}, $DD_{\textrm{m}}(G \by H) \leq DD_{\textrm{m}}(T \by T')$.  Thus, a swap set exists for $G \by H$ regardless of whether swap sets exist for $G$ and $H$.

\begin{corollary}\label{cor:prod} For any non-trivial graphs $G$ and $H$, $DD_{\emph{m}}(G \by H) \leq DD_{\emph{m}}(T \by T')$ where $T$ and $T'$ are spanning trees of $G$ and $H$, respectively.  \end{corollary}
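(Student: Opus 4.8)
The plan is to realize $T \by T'$ as a spanning subgraph of $G \by H$ and then invoke Lemma~\ref{subgraph}. Because that lemma requires the spanning subgraph itself to possess a swap set, the argument naturally splits into two tasks: first, verify the spanning-subgraph relationship; second, confirm that $T \by T'$ actually has a swap set, so that $DD_{\textrm{m}}(T \by T')$ is finite and Lemma~\ref{subgraph} may legitimately be applied. Note at the outset that since $G$ and $H$ are non-trivial, their spanning trees $T$ and $T'$ each have at least two vertices and hence are non-trivial trees.

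For the first task, since $T$ and $T'$ are spanning trees we have $V(T) = V(G)$ and $V(T') = V(H)$, so $V(T \by T') = V(G) \times V(H) = V(G \by H)$. For the edges, any edge of $T \by T'$ joins $(a,b)$ to $(c,d)$ with either $a = c$ and $bd \in E(T') \subseteq E(H)$, or $ac \in E(T) \subseteq E(G)$ and $b = d$; by the definition of the Cartesian product, each such pair is also adjacent in $G \by H$. Hence $T \by T'$ is a spanning subgraph of $G \by H$.

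For the second task, I would appeal to Theorem~\ref{thm:treeprod}. To invoke it, I first need partitions $P$ and $P'$ of $T$ and $T'$ in which every part induces a star of order at least two. Such partitions exist for every non-trivial tree: one may proceed greedily, repeatedly selecting a stem and removing it together with all of its current leaf-neighbors as one star part. A short check shows this operation can never isolate a vertex (a vertex losing all its neighbors would have had to be a leaf of the removed stem, a contradiction), so the remaining forest stays free of isolated vertices and therefore always contains another stem; the process thus terminates having covered every vertex by a star of order at least two. Granting such $P$ and $P'$, Theorem~\ref{thm:treeprod} yields the finite bound $DD_{\textrm{m}}(T \by T') \leq x_P x_{P'}(\ell_P + \ell_{P'} - 1)$, so $T \by T'$ indeed has a swap set. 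Combining the two tasks with Lemma~\ref{subgraph} gives $DD_{\textrm{m}}(G \by H) \leq DD_{\textrm{m}}(T \by T')$, as claimed.

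Both the edge-containment check and the greedy star partition of a tree are routine. The only point requiring any care is confirming that a non-trivial tree always admits a partition into stars of order at least two, since without it Theorem~\ref{thm:treeprod} would not apply and $T \by T'$ might a priori fail to have a swap set. I therefore expect this existence step to be the main (albeit mild) obstacle.
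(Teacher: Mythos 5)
Your proposal is correct and follows the paper's own route: realize $T \by T'$ as a spanning subgraph of $G \by H$, apply Lemma~\ref{subgraph}, and use Theorem~\ref{thm:treeprod} to guarantee that $T \by T'$ has a swap set, hence that the bound is meaningful. The only difference is that you explicitly verify that every non-trivial tree admits a partition into stars of order at least two (via the greedy stem-removal argument), a hypothesis of Theorem~\ref{thm:treeprod} that the paper leaves implicit.
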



Certainly if graph $G$ has a swap set, then $DD_{\textrm{m}}(G \by H) \leq DD_{\textrm{m}}(G) \cdot |V(H)|$: we can partition the vertices of $G \by H$ into $|V(H)|$ disjoint subgraphs, each isomorphic to $G$.  Thus, if $G$ and $H$ are non-trivial graphs with swap sets, $$DD_{\textrm{m}}(G \by H) \leq \min \Big\{ DD_{\textrm{m}}(G) \cdot |V(H)|, DD_{\textrm{m}}(H) \cdot |V(G)|\Big\}.$$  Although, $DD_{\textrm{m}}(G \by H) \geq \gamma(G \by H)$, we ask if this bound could be improved if the swap sets and domination numbers of the input graphs $G$ and $H$ are exploited.

\begin{question} Let $G$ and $H$ be non-trivial graphs, each containing a swap set. Is it true that $$DD_{\emph{m}}(G \by H) \geq \min\Big\{DD_{\emph{m}}(G) \cdot \gamma(H), \gamma(G) \cdot DD_{\emph{m}}(H))\Big\}?$$ 
\end{question}

Recall that Vizing's Conjecture states that $\gamma(G \by H) \geq \gamma(G)\cdot \gamma(H)$; it has been verified for some classes of graphs (including trees), but remains unsettled for general graphs $G$ and $H$.

\begin{question} Is it true for all $G, H$, that $DD_{\emph{m}}(G \by H) \geq  \gamma(H) \cdot \gamma(G)$?
\end{question}

\subsection{Grid Graphs}\label{subsec:mn}

In~\cite{EvictionPaper}, $DD_{\textrm{m}}(P_2 \by P_n)$ and $DD_{\textrm{m}}(P_4 \by P_n)$ were determined for all $n \geq 1$ and $DD_{\textrm{m}}(P_3 \by P_n)$ was determined for $n \not\equiv 1$ (mod $4$).  An open question of~\cite{EvictionPaper} was: is it true that $DD_{\textrm{m}}(P_3 \by P_n) = \gamma(P_3 \by P_n)+2$ when $n \equiv 1$ (mod $4$) for $n > 9$?  We next answer the question negatively.

\begin{lemma}
$DD_{\emph{m}}(P_3 \by P_n)=\gamma(P_3 \by P_n)+1$ for $n \equiv 1 \pmod{4}$ and $n > 9$.
\end{lemma}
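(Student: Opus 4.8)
The plan is to establish the result by proving matching upper and lower bounds. First I would recall the known value of the domination number for these grids: for $P_3 \by P_n$ it is well known that $\gamma(P_3 \by P_n) = \lceil (3n+4)/4 \rceil$ (or the equivalent closed form), and I would compute this explicitly for $n \equiv 1 \pmod 4$, say $n = 4k+1$, obtaining a concrete integer value in terms of $k$. This grounds the target quantity $\gamma(P_3 \by P_n) + 1$ as a specific number, which both bounds must meet.

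For the upper bound, the plan is to exhibit an explicit pair of disjoint dominating sets $D$ and $D'$ with a perfect matching between them whose common cardinality is exactly $\gamma(P_3 \by P_n) + 1$. Because $n \equiv 1 \pmod 4$, I would build a periodic pattern of period $4$ in the long direction (reusing the efficient diagonal/brick domination patterns standard for $3 \times n$ grids) that handles the first $4k$ columns, and then patch the final residual column (the ``$+1$'' piece) with a small local configuration. The key verification steps are: (a) both $D$ and $D'$ dominate every vertex of $P_3 \by P_n$; (b) $D \cap D' = \emptyset$; and (c) there is a perfect matching between $D$ and $D'$ using grid edges. I would arrange the pattern so that $D$ and $D'$ are translates (or reflections) of one another by a single grid step, which makes the perfect matching immediate by pairing each vertex of $D$ with its shifted image in $D'$. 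This also automatically forces $|D| = |D'|$.

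For the lower bound, I would argue $DD_{\textrm{m}}(P_3 \by P_n) \geq \gamma(P_3 \by P_n) + 1$. The inequality $DD_{\textrm{m}}(G) \geq \gamma(G)$ is immediate, so the real content is ruling out equality, i.e.\ showing no swap set of size exactly $\gamma(P_3 \by P_n)$ exists. The plan is to suppose, for contradiction, that $D$ is a minimum dominating set that is also a swap set, hence every vertex of $D$ is matched into a disjoint dominating set $D'$ of the same size. For $n \equiv 1 \pmod 4$ the minimum dominating sets of $P_3 \by P_n$ are highly constrained (essentially forced up to the boundary behavior), so I would analyze the structure of a size-$\gamma$ dominating set column by column and show that any such $D$ cannot admit a disjoint dominating set $D'$ of equal size matched to it --- typically by a counting/parity argument at the two ends of the grid where the efficient pattern cannot be shifted without losing domination or disjointness.

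The hard part will be the lower bound's structural analysis: establishing enough rigidity of the minimum dominating sets of $P_3 \by P_n$ when $n \equiv 1 \pmod 4$ to guarantee that a disjoint equal-size matched twin is impossible. Describing all minimum dominating sets of $3 \times n$ grids exactly (or at least a sufficient invariant of them) is delicate, and the boundary columns require careful case handling; I expect this is where most of the work lies, in contrast to the upper bound, which is a relatively routine (if fiddly) explicit construction plus matching verification.
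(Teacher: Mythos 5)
Your upper bound plan is fine and is essentially the paper's: for $n=4k+1$ the paper exhibits explicit disjoint dominating sets $D,D'$ of size $3k+2$ with a perfect matching between them (a shifted periodic pattern, presented in a figure), exactly the kind of construction you describe. The genuine gap is in your lower bound. You reduce it to the claim that no minimum dominating set of $P_3 \by P_{4k+1}$ admits a disjoint, equal-size, matched twin, and you propose to prove this via ``rigidity'' of minimum dominating sets; but you never supply that argument, and you explicitly defer it as the hard part. The rigidity premise is itself shaky: $\gamma(P_3 \by P_{4k+1}) = 3k+1$ while the grid has $12k+3$ vertices, so a minimum dominating set may waste up to $3k+2$ units of coverage; minimum dominating sets of $3\times n$ grids are therefore far from unique or ``essentially forced,'' and characterizing them well enough to exclude a matched disjoint twin is a substantial project that your proposal only gestures at. As it stands, the lower bound rests on an unproven, imprecisely stated structural claim.

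What you are missing is that this work has already been done one level up: any swap set is an $m$-eternal eviction set, so $DD_{\textrm{m}}(G) \geq e_{\textrm{m}}^\infty(G)$ for every graph $G$ (noted in the paper's introduction), and Corollary 11 of \cite{EvictionPaper} states that $e_{\textrm{m}}^\infty(P_3 \by P_{4k+1}) = \gamma(P_3 \by P_{4k+1})+1$. The paper's entire lower bound is the concatenation $DD_{\textrm{m}}(P_3 \by P_{4k+1}) \geq e_{\textrm{m}}^\infty(P_3 \by P_{4k+1}) = \gamma(P_3 \by P_{4k+1})+1 = 3k+2$; the structural analysis you propose to do from scratch is precisely what underlies that cited eviction result. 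A further caution: your recalled formula $\gamma(P_3 \by P_n) = \lceil (3n+4)/4 \rceil$ is wrong (it gives $2$ at $n=1$ and $3k+2$ at $n=4k+1$); the Jacobson--Kinch value \cite{JK} is $\gamma(P_3 \by P_n) = \lfloor 3n/4 \rfloor + 1$, i.e., $3k+1$ here, so if you anchored both of your bounds to your formula you would be proving the statement for the wrong number.
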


\begin{proof}
Let $n=4k + 1$ for some integer $k \geq 3$.  For a lower bound, we use the domination number from~\cite{JK} and the result of Corollary 11 from~\cite{EvictionPaper}, that $e_{\textrm{m}}^\infty(P_3 \by P_{4k+1})  = \gamma(P_3 \by P_{4k+1})+1$.  Thus, $DD_{\textrm{m}}(P_3 \by P_{4k+1}) \geq 3k+2$.  To achieve the upper bound, we let $D$ and $D'$ denote the squared vertices in Figure~\ref{fig:4k+1} (a) and (b) respectively.  Observe that $D$ and $D'$ are disjoint dominating sets with a perfect matching between them and that $|D| = |D'| = 3k+2$ as desired. \end{proof}

\begin{figure}[htbp]\label{fig:3by4k+1}
\[ \includegraphics[width=0.8\textwidth]{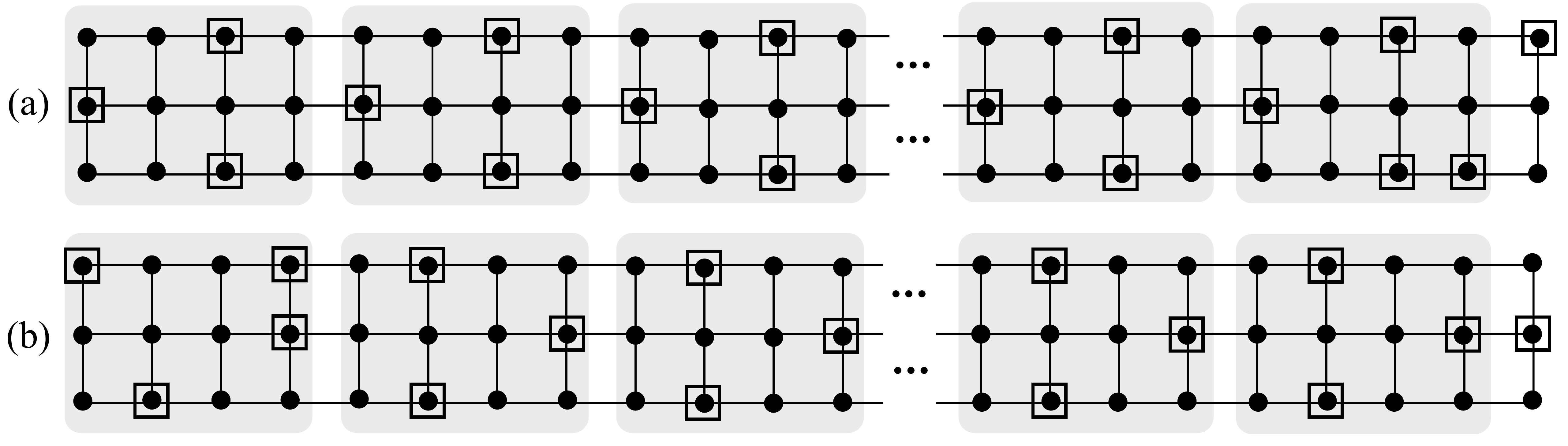} \]

\caption{Minimum disjoint dominating sets on $P_3 \by P_{4k+1}$.}

\label{fig:4k+1}

\end{figure}

Applying Theorem~\ref{thm:treeprod} to paths $P_n$ and $P_m$, we find $DD_{\textrm{m}}(P_n \by P_m) \leq \frac{nm}{3} + o(n+m)$.  However, this bound is not tight.  From~\cite{dom, JK}, we know that $\gamma(P_m \by P_n) = \frac{mn}{5} + o(m+n)$ for $m \geq n \geq 8$; thus it remains to be shown that $DD_{\textrm{m}}(P_m \by P_n) \leq \frac{mn}{5}+o(m+n)$ for $m \geq n \geq 8$.  In particular, we prove the following:

\begin{theorem}\label{uppergrid} For $m \geq n \geq 8$, $DD_{\emph{m}}(P_m \by P_n) \leq \lfloor \frac{(n+2)(m+3)}{5}\rfloor$.\end{theorem}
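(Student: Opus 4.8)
The plan is to improve the constant in the tree-product bound (Theorem~\ref{thm:treeprod} gives only $\tfrac{mn}{3}+o(m+n)$) by building $D$ and $D'$ from the classical efficient diagonal domination pattern that realizes the grid domination number $\tfrac{mn}{5}+o(m+n)$ of \cite{dom,JK}. Label the vertices of $P_m \by P_n$ as $(i,j)$ with $1 \le i \le n$ and $1 \le j \le m$, and recall that on the infinite grid the set $\{(i,j): 2i+j \equiv c \pmod 5\}$ is an efficient dominating set: every vertex lies in the closed neighborhood of exactly one such point. I would take $D$ and $D'$ to be two \emph{consecutive} residue classes of this pattern, namely
$$D = \{(i,j): 2i+j \equiv 0 \pmod 5\}, \qquad D' = \{(i,j): 2i+j \equiv 1 \pmod 5\}.$$
These are automatically disjoint. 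Moreover, a direct check shows each $(i,j)\in D$ has exactly one neighbor in each of the four nonzero residue classes; in particular its right neighbor $(i,j+1)$ has residue $1$ and hence lies in $D'$, and conversely every $(i,j')\in D'$ has its left neighbor in $D$. Thus on the infinite grid the horizontal edges $(i,j)$--$(i,j+1)$ give a perfect matching between $D$ and $D'$, while each set separately dominates everything.

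The main work is the passage from the infinite pattern to the finite board, and this is where I expect the real difficulty to lie. Restricting $D$ and $D'$ to $P_m\by P_n$ creates two kinds of defects near the boundary. First, some boundary vertices become undominated because their unique pattern-dominator would lie just outside the grid; the number of such vertices is $O(m+n)$, concentrated along two of the four sides (those ``downstream'' of the slope of $2i+j$). Second, the built-in matching develops a deficiency: a $D$-vertex in the rightmost column has no partner $(i,m+1)$ inside the grid, and symmetrically some $D'$-vertices on the left are unmatched. I would repair both defects simultaneously by inserting \emph{adjacent correction pairs} $\{v,v'\}$ with $v\in D$, $v'\in D'$, matched to each other: adding vertices never destroys the domination of either set, so I only need to choose the pairs to cover the undominated boundary cells while staying disjoint from the existing pattern, and then splice the new edges $vv'$ into the matching. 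The delicate point is that the same correction vertices must absorb the matching deficiency at the boundary as well, so the pairs have to be placed so that the previously unmatched boundary vertices are either incorporated into a pair or matched against one.

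To land exactly on $\lfloor\tfrac{(n+2)(m+3)}{5}\rfloor$ I would organize the count through a slightly enlarged board: a margin of one row on the top and bottom (total $+2$ in the $i$-direction of size $n$) suffices to bring every inner vertex's dominator ``into range,'' while the $j$-direction of size $m$ requires an extra column beyond the two domination margins — giving $+3$ — precisely because the matching edges point in the $j$-direction and the partners of the rightmost column must be accommodated. The density-$\tfrac15$ residue class inside an $(n+2)\times(m+3)$ region contains at most $\lfloor\tfrac{(n+2)(m+3)}{5}\rfloor$ points, and the hypothesis $n\ge 8$ guarantees the grid is large enough for the periodic pattern to take hold with only the asserted boundary overhead. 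The steps I would carry out, in order, are: (1) verify disjointness, domination, and the horizontal perfect matching of $D,D'$ on the infinite grid; (2) catalogue the undominated cells and the unmatched vertices after restriction; (3) design the matched correction pairs and confirm they restore both domination and a perfect matching while preserving disjointness; and (4) bound the total cardinality by $\lfloor\tfrac{(n+2)(m+3)}{5}\rfloor$ via the enlarged-region count. Step (3) is the crux and the likely source of case analysis, since existence of a swap set is already guaranteed (Theorem~\ref{thm:treeprod} applied to paths, or Lemma~\ref{subgraph}); what remains is solely the sharp bookkeeping of the boundary corrections. I would most likely present the corrected pattern explicitly via an accompanying figure, as in the $P_3\by P_n$ case, to make the boundary pairing transparent.
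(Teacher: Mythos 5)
Your underlying construction coincides with the paper's: the paper also restricts the density-$\tfrac{1}{5}$ perfect dominating pattern of the infinite grid (Chang's $S_t$) to the finite board and obtains the perfect matching from unit horizontal shifts, so your two consecutive residue classes $2i+j\equiv 0,1 \pmod 5$ are, in the interior, exactly the paper's set $D$ and the set reached when each of its tokens steps one column over. The infinite-grid portion of your argument (disjointness, domination by each class, and the horizontal perfect matching) is correct.

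The gap is that your steps (2)--(4) --- which you yourself flag as the crux --- are never carried out, and they constitute essentially the entire content of the paper's proof. Concretely: (a) you never specify the correction pairs, and the claim that you ``only need to choose the pairs to cover the undominated boundary cells'' understates the difficulty, because corrections can collide with the pattern and with each other; the paper must, for instance (in its coordinates), place a boundary token at $(m,2)$ rather than $(m,1)$ precisely because otherwise two matched tokens would both be sent to $(m-1,1)$, and it still needs a corner-by-corner analysis (three corners, five configurations each). (b) Your fix for the rightmost-column matching deficiency differs from the paper's and is costlier: the paper adds no vertices there at all; instead its column-$m$ pattern vertices (the ``white'' tokens) are matched to their \emph{left} neighbors rather than their right ones, so the deficiency is absorbed for free, whereas every correction pair you insert increases $|D|=|D'|$ and therefore threatens the very bound you are trying to prove. (c) Because the correction set is unspecified, the final count is not proved: the enlarged-board heuristic explains where $\lfloor\frac{(n+2)(m+3)}{5}\rfloor$ comes from, but it does not show that your corrected sets have at most that many vertices. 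As written, the proposal identifies the right pattern and the right matching but establishes no explicit inequality; the boundary construction and the cardinality bookkeeping, which are what the theorem actually requires, remain to be done.
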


The remainder of this subsection is devoted to providing disjoint dominating sets $D$ and $D'$ for $P_m \by P_n$ that have a perfect matching and for which $|D|=|D'| = \lfloor \frac{(n+2)(m+3)}{5}\rfloor$.\\

In \cite{EvictionPaper} it was proven that $e^\infty_m(P_n \by P_m) \leq \lfloor\frac{(n+2)(m+3)}{5}\rfloor-4$. The proof consisted of an algorithmic eviction strategy to defend an $n \times m$ grid graph against any sequence of attacks.  We modify the strategy to provide disjoint dominating sets of size $\lfloor\frac{(n+2)(m+3)}{5}\rfloor$ with a perfect matching.

A {\it perfect dominating set} is a set $S\subseteq V$ such that for all $v \in V$, $|N[v] \cap S|=1$. We use the description given in~\cite{Chang} of a perfect dominating set on an infinite grid graph where the vertices are labeled according to their Cartesian coordinates: for any $t \in \{0,1,2,3,4\}$, the vertices in a perfect dominating set are given by the set \begin{equation}\label{perf} S_t = \Big\{ (x,y)~|~y=\frac{1}{2}x+\frac{5}{2}s+t \text{ and } x,y \in \mathbb{Z}, s \in \mathbb{Z}\backslash\{0\}\Big\}.\end{equation}  For $t=3$, the vertices of a perfect dominating set of the infinite grid graph are indicated in Figure~\ref{FigY1} (changing the value of $t$ simply translates the dominating set).

\begin{figure}[htbp]
\[ \includegraphics[width=0.35\textwidth]{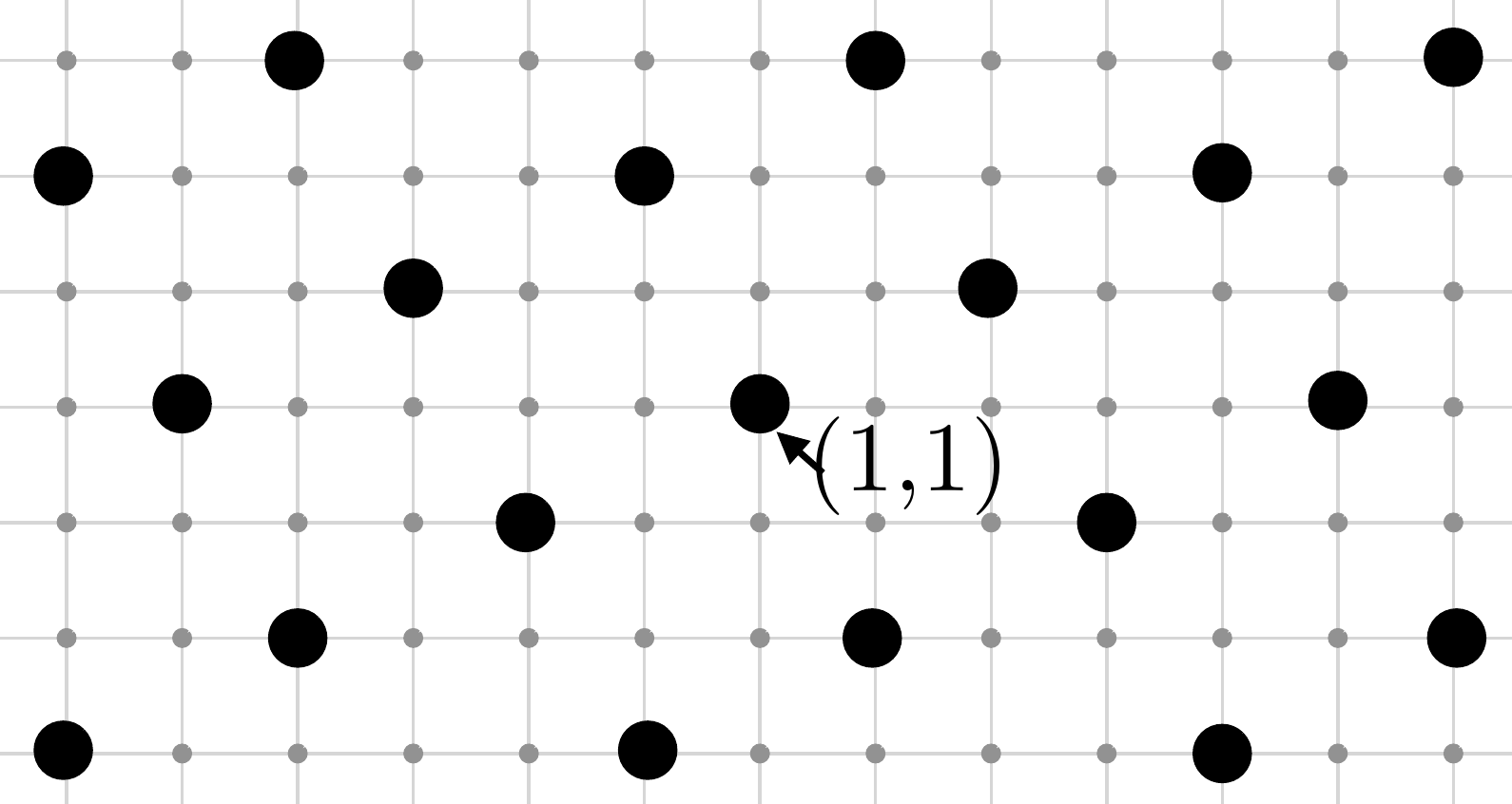} \]

\caption{A perfect dominating set on the infinite grid. }

\label{FigY1}

\end{figure}

Consider the sub-grid $P_m \by P_n$ of the infinite grid graph, induced by vertices $\{ (i,j)~|~ 1 \leq i \leq m, 1 \leq j \leq n\}.$  We will place black and white tokens on vertices of $P_m \by P_n$ and these tokens will together, initially identify the vertices of dominating set $D$.  (We use two colors of tokens as we later have two rulesets for moving tokens to form $D'$.) For $i \in [m-1]$, $j \in [n]$, if vertex $(i,j)$ is in the perfect dominating set of the infinite grid graph defined by $S_3$ in~(\ref{perf}), then a black token is placed on $(i,j)$ in $P_m \by P_n$.  Such a vertex will be referred as a {\it black vertex}.  Observe that no two black vertices are adjacent.  For $j \in [n]$, if vertex $(m,j)$ is in the perfect dominating set of the infinite grid graph defined by $S_3$ in~(\ref{perf}), then a white token is placed on $(m,j)$ in $P_m \by P_n$. Such a vertex will be referred as a {\it white vertex}.

For the remainder of the proof, we refer to vertex $(i,j)$ as being in {\it column $i$} and {\it row $j$} (following the Cartesian coordinate convention). Some vertices in row $1$ and row $n$ of $P_m \by P_n$ are not yet dominated by vertices with tokens.  For $i \in [m]$, if vertex $(i,n+1)$ or $(i,0)$ is in the perfect dominating set $S_3$ defined in~(\ref{perf}), then we place a black token on vertex $(i,n)$ or $(i,1)$, respectively. Some  vertices in column $1$ and column $m$ of $P_m \by P_n$ are not yet dominated by vertices with tokens.  For $ j \in [n]$, if vertex $(0,j)$ or $(-1,j)$ is in the perfect dominating set $S_3$ defined by~(\ref{perf}), then we place a white token on vertex $(2,j)$, provided that vertex does not already have a token (we consider vertices of the form $(-1,j)$ in addition to $(0,j)$ because later, most guards will move to the right to form $D'$ and the additional guards will ensure $D'$ forms a dominating set).  For $j \in [n]$, if vertex $(m+1,j)$ is in the perfect dominating set $S_3$ defined by~(\ref{perf}), then we place a white token on vertex $(m,j)$, provided that vertex does not already have a token.  If $(0,n+1)$, $(m+1,n+1)$, or $(m+1,0)$ is in the perfect dominating set $S_3$, then we place a white token at $(2,n)$, $(m,n)$, or $(m,0)$, respectively.   An example of the placement of black and white tokens on $P_{16} \by P_{12}$ is illustrated in Figure~\ref{fig2} (a); note that the arrows in this figure indicate how vertices in the perfect dominating set of $S_3$ in column $m$ and outside the bounds of $P_m \by P_n$ are mapped to vertices in $P_m \by P_n$.

\begin{figure}[htbp]
\[ \includegraphics[width=\textwidth]{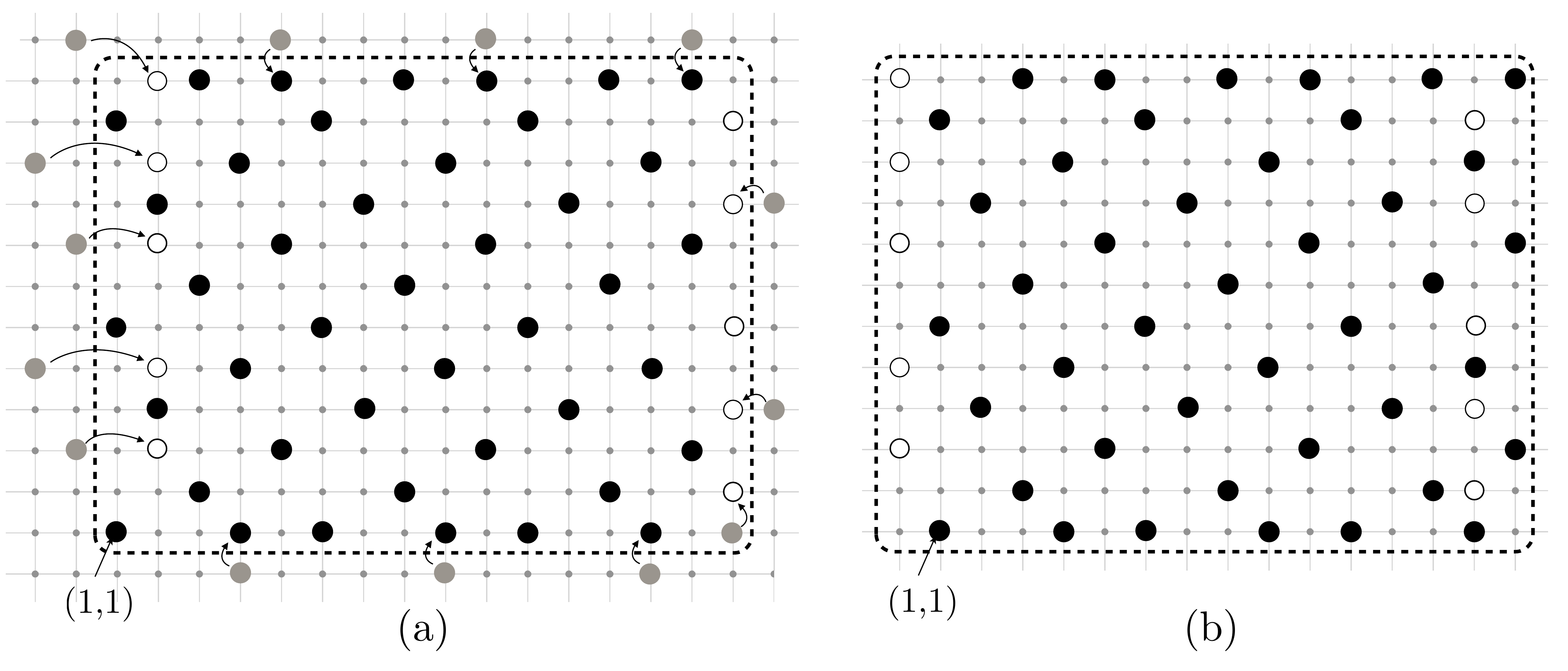} \]

\caption{For $P_{16} \by P_{12}$, disjoint dominating set $D$ is given in (a) and $D'$ is given in (b).}

\label{fig2}

\end{figure}

Since we are considering the infinite grid graph defined by $S_3$, the vertex $(1,1)$ in $P_m \by P_n$ will always have a token.  However, for the three remaining ``corners'', there are five possible arrangement of tokens, depending on which, if any, of any of the four corner vertices (that form a $4$-cycle) are in the perfect dominating sets on the corresponding infinite grid graph defined by $S_3$.  For example, consider the top left corner of $P_{16} \by P_{12}$ shown in Figure~\ref{fig2} (a).  It is necessary to place a white token on $(2,n)$: therefore $(2,n) \in D$; otherwise, vertex $(1,n)$ will not be dominated by $D'$.  Similarly, consider the bottom right corner of $P_{16} \by P_{12}$ shown in Figure~\ref{fig2} (a).  If there was a white token on $(m,1)$, then we would not have a perfect matching with $D'$ as tokens from $(m-2,1)$ and $(m,1)$ would both move to $(m-1,1)$. Therefore, instead of placing a white token on $(m,1)$, we place a white token on $(m,2)$ (then $(m,2) \in D$). Then vertices $(m,1), (m,2)$ are dominated by $D$ and no conflict arises.  In the majority of situations, like the top right corner of $P_{16} \by P_{12}$ shown in Figure~\ref{fig2} (a), there is no need to deviate from the original placement of tokens.  Since there are three corners to consider and five possible situations for each, we omit the remaining situations as the deviations are similar to the one described above.

By the placement of tokens, the vertex immediately to the right of each black vertex does not contain a token; and the vertex immediately to the left of each white vertex does not contain a token.  Thus, to move from $D$ to disjoint set $D'$, we simply move each black token immediately to the right and each white token immediately to the left.  The set of vertices that contain black and white tokens after this transition form a new dominating set $D'$; and observe that there is a perfect matching between dominating sets $D$ and $D'$.

Since $|D|=|D'| = \lfloor \frac{(n+2)(m+3)}{5}\rfloor$, we find $DD_{\textrm{m}}(P_n \by P_m) \leq  \lfloor \frac{(n+2)(m+3)}{5}\rfloor$ as desired.

\section{Independence Number}\label{sec:ind2}


It was shown in~\cite{EvictionOnTrees}, that $\gamma(G) \leq e_{\textrm{m}}^\infty(G) \leq \alpha(G)$ for any graph $G$.
Although $e_{\textrm{m}}^\infty(G) \leq DD_{\textrm{m}}(G)$ and  there exist many graphs (see Figures~\ref{fig} and~\ref{Fig2} for example) for which $DD_{\textrm{m}}(G) \leq \alpha(G)$, we do not know in general whether $DD_{\textrm{m}}(G)$ and $\alpha(G)$ are comparable.  In this section, we provide results for graphs with small independent number.

\begin{theorem}\label{ind2} Let $G$ be a connected graph with $n > 3$ vertices such that $\alpha(G)=2$. Then $G$ contains a swap set and furthermore,  $DD_{\emph{m}}(G) \leq 2$.
\end{theorem}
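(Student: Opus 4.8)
The plan is to exploit two elementary consequences of $\alpha(G)=2$. First, the complement $\overline{G}$ is triangle-free, since a triangle in $\overline{G}$ would be an independent set of size $3$ in $G$. Second, any two non-adjacent vertices $a,c$ already form a dominating set of $G$: if some $x\notin\{a,c\}$ were adjacent to neither, then $\{a,c,x\}$ would be an independent triple. Thus, in a graph with $\alpha(G)=2$, it is \emph{domination} that is cheap, and the real work lies in arranging a perfect matching between two disjoint dominating pairs. I would also record the trivial remark that two universal vertices $u_1,u_2$ give the swap sets $\{u_1\},\{u_2\}$ (they are adjacent and each dominates), so in that case $DD_{\textrm{m}}(G)=1$.

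With these in hand, I would split on the number of universal vertices. If $G$ has at least two, then $DD_{\textrm{m}}(G)=1\le 2$ and we are done. Otherwise $G$ has at most one universal vertex, and I would pass to $\overline{G}$, which is triangle-free and (as $\alpha(G)=2$) has at least one edge. The good case is when $\overline{G}$ contains two independent edges $ab$ and $cd$: then $\{a,b\}$ and $\{c,d\}$ are disjoint dominating sets by the non-edge fact, and it remains only to match them. Here is the key point: any two of the cross pairs $ac,ad,bc,bd$ that share a vertex cannot both be edges of $\overline{G}$, since each such coincidence would close a triangle with $ab$ or with $cd$. Hence the cross pairs that happen to be $\overline{G}$-edges all lie in a single diagonal of the $4$-cycle $ac,ad,bd,bc$, so the opposite diagonal — either $\{ac,bd\}$ or $\{ad,bc\}$ — consists entirely of edges of $G$ and furnishes the required perfect matching, giving $DD_{\textrm{m}}(G)\le 2$.

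The remaining case is when $\overline{G}$ has no two independent edges; being triangle-free, its edges then form a star centered at some vertex $z$ with leaf set $W$. I expect this degenerate case to be the main obstacle, and it is exactly where the hypothesis $n>3$ is needed: if $|W|=1$ then $G$ has $n-2\ge 2$ universal vertices, contradicting the assumption of at most one, so in fact $|W|\ge 2$. Writing $U$ for the remaining (universal) vertices, connectivity forces $U\ne\emptyset$ (since $N_G(z)=U$), while the ``at most one universal'' hypothesis forces $|U|=1$, say $U=\{u\}$. I would then verify directly that $D=\{z,w_1\}$ and $D'=\{u,w_2\}$ are disjoint dominating sets with perfect matching $zu$, $w_1w_2$ (using that $z\sim u$ and that $w_1,w_2$ are adjacent), so again $DD_{\textrm{m}}(G)\le 2$. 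Throughout, the only genuinely delicate step is producing the matching — domination comes for free — and the triangle-free diagonal argument is what guarantees a matching is always available.
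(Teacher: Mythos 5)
Your proof is correct, and while it rests on the same key mechanism as the paper's, it organizes the argument differently. The paper fixes a maximum independent set $I=\{u,v\}$ and splits on whether $N(I)$ contains an independent pair or is a clique; you split instead on the number of universal vertices and on whether $\overline{G}$ contains two independent edges. In the main case the two arguments coincide: two vertex-disjoint non-adjacent pairs each dominate $G$ (since $\alpha(G)=2$), and a perfect matching between them must exist, since any failure produces an independent triple --- your ``diagonal'' argument in the triangle-free complement is exactly the paper's observation in complement language. The degenerate cases are where you genuinely diverge. The paper disposes of the clique case in one line: connectivity and $|N(I)|\geq 2$ allow choosing distinct $w\in N(u)$ and $x\in N(v)$, whereupon $\{w,x\}$ dominates and $\{uw,vx\}$ is the matching. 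You instead pin down the full structure of the exceptional graphs ($\overline{G}$ is a star with at least two leaves and $G$ has exactly one universal vertex) and exhibit explicit swap sets $\{z,w_1\}$, $\{u,w_2\}$. Note also that the two case decompositions are transversal rather than nested: a graph whose complement consists of the two independent edges $uy$, $vz$ plus isolated vertices falls in the paper's clique case but in your ``good'' case, while your star case sits inside the paper's clique case. What your route buys: it isolates precisely where $n>3$ and connectivity are needed, it yields the sharper conclusion $DD_{\textrm{m}}(G)=1$ when two universal vertices exist, and it does not require the dominating pairs to arise from a maximum independent set. What the paper's route buys: brevity, since the clique case closes immediately without any structural classification. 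Both proofs are complete and correct.
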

\begin{proof} Let $I=\{u, v\}$ be a maximum independent set in $G$. Since $n > 3$, $|N(I))| \geq 2$. First suppose that $N(I)$ contains an independent set of size two, say $\{y, z\}$. Then there is a matching between $\{u, v\}$ and $\{y, z\}$, else $G$ contains an independent set of size three. Since $\{y, z\}$ is a dominating set, $G$ contains a swap set. On the other hand, if $N(I)$ is complete, since $|N(I)| \geq 2$ and since $G$ is connected, there exist two vertices $\{w, x\} \subseteq N(I)$ such that $\{w, x\}$ dominate $G$ and such that there exists a matching between $\{u, v\}$ and $\{w, x\}$.
\end{proof}

We leave as an open problem the characterization of those graphs $G$ for which $DD_{\textrm{m}}(G) = 2$.

In what follows, we say that a matching $M= \{u_1v_1, u_2v_2, \ldots, u_kv_k\}$ is a {\it desired matching} if both $\{u_1, u_2, \ldots u_k\}$ and  $\{v_1, v_2, \ldots v_k\}$
are dominating sets.

\begin{theorem} Let $G$ be a connected graph containing a swap set such that $\alpha(G)=3$. Then $DD_{\emph{m}}(G) \leq \alpha(G)$.
\end{theorem}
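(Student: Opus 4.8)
The plan is to exhibit a \emph{desired matching} (in the sense just defined) of size at most $3$: since the two sides of such a matching are disjoint dominating sets joined by a perfect matching, any desired matching of size $k$ certifies $DD_{\textrm{m}}(G)\le k$. Throughout I use the standard fact that a maximum independent set, being maximal, is a dominating set. Fix a maximum independent set $I=\{a,b,c\}$; then $I$ dominates $G$ and $V(G)=I\cup N(I)$ with $I\cap N(I)=\emptyset$, so every independent triple contained in $N(I)$ is disjoint from $I$ and is itself a maximum (hence dominating) independent set.

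The main case is when $N(I)$ contains an independent set $I'=\{a',b',c'\}$ of size $3$ (equivalently, $G$ has two disjoint maximum independent sets). Both $I$ and $I'$ dominate, so it suffices to produce a perfect matching between them in $G$. I would obtain this from K\"onig's theorem applied to the bipartite graph $B$ whose parts are $I,I'$ and whose edges are the edges of $G$ between the two triples: if $B$ had no perfect matching, its maximum matching would have size at most $2$, so $B$ would admit a vertex cover $C$ with $|C|\le 2$. But then $(I\cup I')\setminus C$ has at least $6-2=4$ vertices and is independent in $G$ (there are no edges inside $I$ or inside $I'$, and every edge of $G$ between the triples is an edge of $B$, hence met by $C$), contradicting $\alpha(G)=3$. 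Thus the perfect matching exists and $I,I'$ form a desired matching of size $3$. Note that this case does not even use the swap-set hypothesis.

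It remains to treat the complementary case $\alpha\big(G[N(I)]\big)\le 2$. Here the swap-set hypothesis is essential --- for instance $K_{1,3}$ has $\alpha=3$ but no swap set and falls into exactly this case --- and I would use it through Lemma~\ref{lemma:fwd_sw_set_characterisation}: since $DD_{\textrm{m}}(G)$ is finite, $G$ has no strong stem, i.e.\ no vertex carries two leaves. I would also use $\gamma\big(G[N(I)]\big)\le \alpha\big(G[N(I)]\big)\le 2$. When $N(I)$ is a clique I would choose a system of distinct representatives $w_a\in N(a)$, $w_b\in N(b)$, $w_c\in N(c)$; Hall's condition holds because the only way it can fail --- two vertices of $I$ sharing a single common neighbour and having no others --- would make that neighbour a strong stem, which is forbidden (the finitely many configurations with $|N(I)|<3$ are checked by hand). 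Then $\{w_a,w_b,w_c\}$ dominates $G$ (it dominates the clique $N(I)$ and each of $a,b,c$) and is matched to $I$ via $a w_a,\,b w_b,\,c w_c$, a desired matching of size $3$. When instead $\alpha\big(G[N(I)]\big)=2$, a maximum independent pair $\{y,z\}\subseteq N(I)$ either already dominates $G$ --- in which case $\gamma(G)\le 2$ and I would construct a desired matching of size at most $3$ in the spirit of Theorem~\ref{ind2} --- or some vertex of $I$ is adjacent to neither $y$ nor $z$, and a short subcase analysis (again invoking the absence of strong stems to exclude degenerate configurations) supplies the matching.

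The main obstacle is precisely this last case $\alpha\big(G[N(I)]\big)=2$: unlike the clean K\"onig argument, selecting two disjoint, mutually matched dominating sets of size at most $3$ must be carried out by hand across the possible adjacency patterns between $I$ and the near-clique $N(I)$, and the exact role of the swap-set hypothesis is to eliminate the obstructions (such as shared-leaf stems) that would otherwise force $DD_{\textrm{m}}(G)=\infty$.
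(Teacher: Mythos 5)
Your first case is correct and is genuinely different from the paper's argument: when $N(I)$ contains an independent triple $I'$, both $I$ and $I'$ are maximum, hence maximal, hence dominating independent sets, and the K\"onig argument (a vertex cover of size at most $2$ would leave an independent set of size at least $4$ inside $I\cup I'$) does yield a perfect matching between them; as you note, this needs no swap-set hypothesis, and it is cleaner than anything in the paper for that situation. The problem is everything after that. The complementary case $\alpha\big(G[N(I)]\big)\le 2$ is where essentially all of the difficulty of this theorem lives --- it is exactly the regime in which the paper carries out its long exchange arguments (when the set $J$ matched to $I$ fails to dominate, when $J$ spans an edge, when no matching saturates $I$) --- and your treatment of it consists of promissory notes rather than proofs. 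Two concrete gaps: (a) in the subcase where the independent pair $\{y,z\}\subseteq N(I)$ dominates $G$, you propose to finish ``in the spirit of Theorem~\ref{ind2}''; but the proof of Theorem~\ref{ind2} hinges on $\alpha(G)=2$ (any two nonadjacent vertices of $N(I)$ must admit a matching to $I$, else an independent triple appears), and that implication evaporates when $\alpha(G)=3$, so knowing $\gamma(G)\le 2$ does not by itself produce a desired matching of size at most $3$; (b) in the subcase where some vertex of $I$ is adjacent to neither $y$ nor $z$, the entire argument is the sentence ``a short subcase analysis supplies the matching.'' Nothing you have set up shows this analysis is short, or even that your case split makes it any shorter than the paper's; you yourself flag it as ``the main obstacle,'' which is an accurate self-assessment.

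The clique subcase is essentially sound: Hall's condition for a system of distinct representatives of $I$ in $N(I)$ can only fail via a strong stem (excluded by Lemma~\ref{lemma:fwd_sw_set_characterisation}) or via $|N(I)|\le 2$, in which case $|V(G)|\le 5$ and the finitely many weak configurations can be checked --- though note the check is not vacuous: with $N(I)=\{u,v\}$, $a$ adjacent to both, and $b,c$ leaves at $u,v$ respectively, no such system of representatives exists, and one must instead exhibit the pair of dominating sets $\{u,c\}$ and $\{v,b\}$ matched by $ub$, $cv$. As it stands, then, your proposal proves the theorem only for graphs possessing two disjoint maximum independent sets; for all remaining graphs, which is the heart of the matter, it is an outline rather than a proof.
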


\begin{proof} Let $I$ be a maximum independent set of $G$, so $|I|=3$. First suppose there exists a set of vertices $J$ such that there is a perfect matching between $I$ and $J$. Assume that $I$ and $J$ are chosen so that $J$ dominates as many vertices as possible, over all choices of $I, J$ where $I$ is a maximum independent set and there is a perfect matching $M$ between $I$ and $J$. Suppose to the contrary that $I$ is not a swap set. Since $I$ is a dominating set, it must be that $J$ is not a dominating set. Let $x \in V \backslash (I \cup J)$ be a vertex not dominated by $J$. Let $x$ be dominated by $v \in I$. Since $|J|=3$ and $x$ is not dominated by $J$, it must be that there is an edge $yz$, where $\{y, z\} \in J$ (otherwise $J$ is a maximum independent set of $G$ and thus a dominating set).

Let us first suppose $v$ is matched with $z$ in $M$. Then $J' = J \cup \{x\}\backslash \{z\}$ contradicts the maximality of $I, J$, unless $z$ is the only vertex in $J$ adjacent to $a \in V\backslash (I \cup J)$ and $x$ is not adjacent to $a$ (as $\alpha(G)=3$, there is at most one such vertex $a$).  Suppose $z$ is the only vertex in $J$ adjacent to $a \in V \backslash (I \cup J)$ and $x$ is not adjacent to $a$.  Let $w$ be a vertex of $J'$ (along with $x$ and $y$); since $x$ is not adjacent to $y$ or $w$, it must be that $w$ is adjacent to $y$ (else $J'$ is a maximum independent set and thus a dominating set).  Let $c, d$ be the other two vertices in $I$, with $cw, dy \in M$. If $c$ ($d$) is adjacent to $a$, then $\{ca, dy, vx\}$ ($\{da, cw, vx\}$) is the desired matching (note that both $\{y, x, a\}$ and $\{w, x, a\}$ are independent sets). Otherwise, neither $c$ nor $d$ are adjacent to $a$. But this implies $v$ is adjacent to $a$ (since $I$ is a dominating set). If $x$ is independent of $\{c, d\}$, then $\{x, a, c, d\}$ is an independent set of size four. Therefore, without loss of generality, that $d$ is adjacent to $x$. Then  $\{cw, dx, va\}$ is the desired matching (note that $\{w, x, a\}$ is an independent set).




Therefore, suppose $v$ is not matched with $z$ (or, by the same logic, $y$), but some other vertex $w$ that is independent of the remaining two vertices in $J$. If $x$ is adjacent to either of the other two vertices in $I$, then the argument above applies. So assume $x$ is independent of both of the other two vertices in $I$. Observe $w$ is adjacent to at least one of $c,d$; otherwise $\{c,d,w,x\}$ is an independent set.  If $w$ is adjacent to $c$ then $\{y,w,x\}$ is an independent set and $\{dy,cw,vx\}$ is the desired matching.  If $w$ is adjacent to $d$ then $\{w,z,x\}$ is an independent set and $\{dw, cz, vx\}$ is the desired matching.

Now let us suppose there is no such set $J$ such that there is a perfect matching between $I$ and $J$. Let $M$ be a matching that matches the maximum number of vertices in $I$. Let $J$ be the set of endvertices of $M$ not contained in $I$. Then there must be some vertex $v \in I$ that is not matched by $M$. There are two possibilities. If $V = I \cup J$ then since $|J| \leq 2$, either $G$ has no swap set (a contradiction) or $DD_{\textrm{m}}(G) \leq 2$.


Otherwise there is a vertex $x \notin I \cup J$. Let $I'$ be the subset of $I$ containing vertices in $I$ not matched by $M$.  Then $x$ is not adjacent to any vertex in $I'$, else the maximality of $M$ is contradicted.   Since $I$ is a dominating set, $x$ must be adjacent to some vertex $u \in I$.  First, suppose $|I'|=1$.  If $u$ is matched to $z \in J$ where $z$ is adjacent to some vertex $v \in I'$, then a larger matching than $M$ can be found (remove $uz$ from $M$ and replace with $vz$ and $ux$). Since this contradicts the maximality of $M$, it must be that $x$ is adjacent to $u \in I$, $z$ is matched in $u$ and $v \in I'$ is adjacent to some vertex $y \in J$, where $y$ is matched to $q \in I$ (and $q$ is not adjacent to $x$). Then either $y$ is adjacent to two pendant vertices ($q$ and $v$) which implies $G$ has no swap set; or $q$ is adjacent to $z$ (in which case we form the matching $qz$, $yv$, $ux$); or $q$ is adjacent to some other vertex $x' \notin I \cup J$, which also implies the existence of a matching larger than $M$.  Second, suppose $|I'|=2$.  If $v \in I'$ is adjacent to $z \in J$, then we contradict the maximality of $M$ (remove $uz$ from $M$ and replace it with $ux$ and $vz$).  Thus, the vertices of $I'$ are isolated vertices which contradicts the requirement that $G$ is connected.

\end{proof}

\begin{conjecture}
Let $G$ be a connected graph containing a swap set. Then $DD_{\emph{m}}(G) \leq \alpha(G)$.
\end{conjecture}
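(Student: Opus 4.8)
The plan is to generalize the extremal matching argument that settles the $\alpha(G)=2$ and $\alpha(G)=3$ cases (Theorem~\ref{ind2} and the $\alpha(G)=3$ theorem proved above). Let $I$ be a maximum independent set, so $|I|=\alpha(G)$; since $I$ is maximal it is a dominating set and $V(G)\setminus I\subseteq N(I)$. If some such $I$ admits a matching into its complement whose partner set $J$ is \emph{also} a dominating set, then $D=I$ and $D'=J$ immediately give $DD_{\textrm{m}}(G)\le|I|=\alpha(G)$. So the entire problem reduces to one existence statement: does some maximum independent set have a matched dominating partner? Since $e_{\textrm{m}}^\infty(G)\le\alpha(G)$ (Theorem~\ref{theorem: alphagreaterthanE}) and $e_{\textrm{m}}^\infty(G)\le DD_{\textrm{m}}(G)$, the conjecture asserts that the extra perfect-matching requirement never pushes the swap number past this independence bound, provided it is finite at all.

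First I would dispose of the branch in which \emph{no} maximum independent set is saturated by a matching into its complement. By Hall's theorem this means some $S\subseteq I$ has $|N(S)|<|S|$, and I would argue (as in the proof of the $\alpha(G)=3$ theorem above) that such a violator, together with connectivity and the hypothesis that a swap set exists, forces either an obstruction to any swap set in the spirit of Lemma~\ref{lemma:fwd_sw_set_characterisation} or a graph small enough to bound $DD_{\textrm{m}}(G)$ by hand. In the main branch, among all pairs $(I,M)$ with $I$ a maximum independent set and $M$ a matching saturating $I$ with partner set $J$, I would choose one maximizing the number of vertices dominated by $J$. Suppose $J$ is not dominating and pick $x\notin I\cup J$ undominated by $J$; since $I$ dominates, some $v\in I$ is adjacent to $x$. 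Note $J$ cannot be independent, for then $J$ would be a maximum independent set and hence dominating; thus $J$ contains an edge. Writing $z$ for the $M$-partner of $v$, the swap $J'=(J\setminus\{z\})\cup\{x\}$ is again saturated by a matching to $I$ (rematch $v$ to $x$), so by the extremal choice $J'$ cannot dominate more vertices than $J$. This non-improvement is the engine of the proof: it forces $z$ to possess ``private'' neighbours outside $I\cup J$ that $x$ fails to cover.

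The crux, and the reason this remains a conjecture, is converting these private-neighbour pockets into a contradiction \emph{uniformly} in $\alpha(G)$. When $\alpha(G)=3$ there is essentially one private neighbour $a$, and one checks directly that $\{x,a\}$ extends to an independent set of size $4$, or that a strictly larger matching exists --- either way a contradiction. For general $\alpha(G)$, however, $z$ may have many private neighbours, and these need not be pairwise non-adjacent: they can induce a clique, so they do \emph{not} automatically yield an independent set larger than $I$. The main obstacle is therefore to rule out or exploit such clique pockets --- to show that every forced non-improving swap admits \emph{some} alternative move (a different matched partner for $v$, an augmenting path in $M$, or an entirely different maximum independent set) that strictly increases an appropriate extremal quantity. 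I expect the right formalization to replace ``number of vertices dominated by $J$'' with a lexicographic potential, for instance (number of $I$-vertices saturated, number of vertices dominated by $J$, $-|E(G[J])|$), and to drive it up by augmenting-path surgery on $M$; exhibiting a single such potential that must strictly improve in \emph{every} configuration, no matter how large $\alpha(G)$ and the private pockets grow, is the step I expect to be genuinely hard.
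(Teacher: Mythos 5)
The statement you were asked to prove is not proved in the paper at all: it is stated as a conjecture, and the paper establishes only the special cases $\alpha(G)=2$ (Theorem~\ref{ind2}) and $\alpha(G)=3$ (the theorem preceding the conjecture, under the same swap-set hypothesis). Your proposal is, by your own admission, also not a proof --- you explicitly flag the crux (converting the private-neighbour pockets left by a non-improving swap into a contradiction uniformly in $\alpha(G)$) as unresolved, and your suggested lexicographic potential is a proposal for an invariant, not an argument that it strictly increases in every configuration. So there is a genuine gap, and it is exactly the gap that makes this a conjecture rather than a theorem: the paper's extremal-matching technique for $\alpha(G)=3$ relies on the fact that a single undominated vertex plus a single private neighbour already force an independent set of size $4$ or an augmenting path, and this finite case analysis has no known uniform replacement.

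Two further points in your sketch deserve correction. First, your claimed reduction (``the entire problem reduces to: does some maximum independent set have a matched dominating partner?'') is too strong. That existence statement is sufficient but not necessary for the conjecture: the swap sets witnessing $DD_{\textrm{m}}(G) \leq \alpha(G)$ need not contain, or be matched to, any maximum independent set. Indeed, in the paper's own $\alpha(G)=3$ proof, the branch where no perfect matching between $I$ and any $J$ exists is resolved by producing swap sets unrelated to $I$ (for instance of size $2$, or extracted from paths in $I \cup J \cup Q$), not by refuting the hypothesis. Second, your Hall's-theorem branch is not actually handled: a Hall violation on a maximum independent set does not force an obstruction in the spirit of Lemma~\ref{lemma:fwd_sw_set_characterisation} (strong stems are only one very special kind of deficiency), nor does it bound the graph's size, so ``dispose of this branch'' is itself an open sub-problem for general $\alpha(G)$. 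Your overall program --- extremal choice of $(I,M,J)$, swap surgery, and a potential function --- is a faithful generalization of the paper's method and a reasonable line of attack, but both branches of it remain open.
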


We make the following conjecture, which is similar in nature to the well-known theorem that the order of a graph equals its matching number plus its independence number. However, this conjecture is a bit more involved than that, as it requires the existence of a matching that is a desired matching.

\begin{conjecture} \label{swap-conj} For any constant $x$ there is a positive constant $y$ such that if $\alpha(G)=x$ and $|V(G)| \geq y$, then $G$ has a swap set.
\end{conjecture}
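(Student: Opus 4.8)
The plan is to prove Conjecture~\ref{swap-conj}: for fixed $\alpha(G) = x$, every sufficiently large connected graph $G$ has a swap set. The natural strategy is to exploit the fact that bounded independence number forces the graph to be very dense in a structured way. First I would invoke the observation (used implicitly in Lemma~\ref{lemma:fwd_sw_set_characterisation}) that a swap set cannot exist when some vertex has two private leaf-neighbors; but with $\alpha(G)$ bounded and $|V(G)|$ large, such obstructions are ruled out for large $n$, since a strong stem together with its leaves and a maximum independent set elsewhere would inflate $\alpha(G)$. More precisely, since $\alpha(G) \leq x$, the complement $\overline{G}$ has clique number at most $x$, so by Ramsey-type reasoning any large vertex set contains many mutually adjacent vertices in $G$; large cliques are exactly the structures that admit swap sets easily.

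The key steps, in order, would be as follows. First, partition $V(G)$ into at most $x$ ``clusters'' by a greedy covering: repeatedly extract a maximum independent set and observe that the non-neighbors of any vertex form a set of independence number at most $x-1$, so a bounded recursion yields a structured decomposition. Second, I would isolate a large clique $K$ in $G$; such a clique of size at least $(n/x)^{1/x}$ or so exists by Ramsey's theorem applied to $\overline{G}$, and for $n$ large this clique has size at least $4$. Third, I would construct a swap set by using the clique as a ``hub'': pick two disjoint pairs of clique vertices to seed $D$ and $D'$, match them within the clique (any two disjoint pairs in a clique have a perfect matching between them), and verify domination. The domination check is where the cluster decomposition pays off: since every vertex lies within distance governed by the bounded independence number, a constant number of well-chosen vertices dominates everything, and I would argue each of $D$, $D'$ can be padded with a bounded number of extra matched pairs (drawn again from cliques) to cover any residual undominated vertices.

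The main obstacle I anticipate is twofold. The first difficulty is guaranteeing that the two sets $D$ and $D'$ simultaneously dominate \emph{and} admit a perfect matching between them, rather than merely being dominating sets; the matching constraint is exactly what distinguishes $DD_{\textrm{m}}$ from ordinary disjoint domination, and padding one set to fix domination can break the matching, as the corner analysis in the grid proof and the delicate case analysis in the $\alpha(G) = 3$ theorem both illustrate. The second difficulty is quantitative: the Ramsey bound $y = y(x)$ one extracts this way will be astronomically large (tower-type in $x$), but since the conjecture only asks for the \emph{existence} of some constant $y$, this is acceptable and I would not attempt to optimize it.

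To manage the matching obstacle, I would set up the construction so that matched pairs are chosen as disjoint edges of cliques, each edge contributing one vertex to $D$ and one to $D'$; this makes the perfect matching automatic and reduces the entire problem to: can a constant number of clique-edges be placed so that one endpoint-class and the other endpoint-class each dominate $G$? I would prove this by first fixing a dominating set $D_0$ of bounded size (domination number is bounded in terms of $\alpha$ for connected graphs, since a maximal independent set dominates, giving $\gamma(G) \leq \alpha(G) \leq x$), then showing each vertex of $D_0$ can be replaced by a clique-edge whose two endpoints both dominate $N[D_0\text{-vertex}]$. The availability of such edges follows because, for $n$ large, each bounded neighborhood that must be covered contains a large clique, and large cliques contain vertices dominating the required region. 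Wrapping these replacements together yields disjoint $D$ and $D'$ with the desired matching, completing the argument once $n \geq y(x)$.
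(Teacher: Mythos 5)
This statement is posed in the paper as an open conjecture; the paper contains no proof of it, so your proposal must stand on its own. It does not, and the failure occurs at a precisely identifiable step: your opening claim that with $\alpha(G)$ bounded and $|V(G)|$ large, strong stems (vertices with two or more leaf neighbors) are ruled out because "a strong stem together with its leaves and a maximum independent set elsewhere would inflate $\alpha(G)$." That is false: a strong stem and its leaves force only $\alpha(G)\geq 3$, and the graph can be made arbitrarily large by means that add almost nothing to the independence number. Concretely, let $G_n$ be obtained from the complete graph $K_n$ by attaching two pendant vertices $\ell_1,\ell_2$ to a single clique vertex $v$. Then $G_n$ is connected, $|V(G_n)|=n+2$ is arbitrarily large, and $\alpha(G_n)=3$ (a maximum independent set is $\{\ell_1,\ell_2,w\}$ for any clique vertex $w\neq v$). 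But $v$ is a strong stem, so by Lemma~\ref{lemma:fwd_sw_set_characterisation} we have $DD_{\textrm{m}}(G_n)=\infty$: in any candidate pair $D,D'$, both leaves would need to be matched to their unique neighbor $v$. Hence no constant $y$ works for $x=3$, and Conjecture~\ref{swap-conj} is false as literally stated, so no strategy can close your gap. (The same example shows that Theorem~\ref{ind3} of the paper is also incorrect as stated; its proof errs exactly where yours does, in asserting that a matching saturating a maximum independent set exists once $|V(G)|\geq 6$, which fails here because both leaves have only the neighbor $v$. Both statements presumably require the additional hypothesis that $G$ is a weak graph.)

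Even if one repairs the statement by forbidding strong stems, your construction has a second unjustified leap: you propose to replace each vertex $u$ of a bounded-size dominating set $D_0$ by a clique edge $ab$ satisfying $N[u]\subseteq N[a]\cap N[b]$, arguing such an edge exists because $N[u]$ contains a large clique. Containing a large clique provides no vertex, let alone an adjacent pair of vertices, that dominates all of $N[u]$; in the example above $N[v]$ is the entire graph and contains $K_n$, yet no vertex of $K_n$ dominates the two leaves. Bounded independence number does bound $\gamma(G)$ and does produce large cliques via Ramsey's theorem, as you say, but it gives no control over which vertices those cliques dominate, and that control is exactly what your matched-pair replacement needs.
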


Repeating an earlier example, consider $K_3$ with each edge duplicated and then subdivided.  Call this new graph $G$.  Then $\alpha(G) = 6$, $|V(G)| = 9$, but $G$ has no swap set. Thus if Conjecture \ref{swap-conj} is true, then if $\alpha(G)=6$, we need $y$ to be at least 10 (and perhaps larger) in order to ensure that any $G$ with $\alpha(G)=6$ and $|V(G)|= y$ vertices has a swap set.

\begin{theorem}\label{ind3} Let $G$ be a connected graph with $n \geq 6$ vertices such that $\alpha(G)=3$. Then $G$ contains a swap set.
\end{theorem}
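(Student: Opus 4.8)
The plan is to produce two disjoint dominating sets with a perfect matching between them (a \emph{desired matching}), taking one of the two sets, whenever possible, to be a maximum independent set. Fix a maximum independent set $I=\{u,v,w\}$. Being maximal it dominates $G$, so every vertex of $W:=V\setminus I$ has a neighbour in $I$; note $W=N(I)$ and $|W|=n-3\ge 3$. I would then split the argument according to the value of $\alpha(G[W])$.

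The principal case is $\alpha(G[W])=3$. Then $W$ contains an independent triple $J=\{x,y,z\}$, and since $|J|=3=\alpha(G)$ the set $J$ is a maximum independent set, hence maximal, hence dominating, and plainly $J\cap I=\emptyset$. It remains to produce a perfect matching between $I$ and $J$, for which I would argue through Hall's condition. Let $H$ be the bipartite graph on $I\cup J$ consisting of the edges of $G$ joining $I$ to $J$. If $H$ admitted no perfect matching, Hall's condition would fail for some $S\subseteq I$, i.e.\ $|N_H(S)|<|S|$; but then $(J\setminus N_H(S))\cup S$ would be an independent set (no edges lie inside $I$, none inside $J$, and none between $S$ and $J\setminus N_H(S)$ by definition) of size $3-|N_H(S)|+|S|\ge 4$, contradicting $\alpha(G)=3$. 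Hence $H$ has a perfect matching, and $I$ together with $J$ is the required pair of swap sets.

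The remaining case, $\alpha(G[W])\le 2$, is where the genuine work lies. Here $W$ has no independent triple, so $\gamma(G[W])\le\alpha(G[W])\le 2$ and $W$ is close to complete. The plan is to match $u,v,w$ to neighbours $a,b,c\in W$ respectively: the matching edges $ua,vb,wc$ already guarantee that $\{a,b,c\}$ dominates $I$, so it suffices to arrange that $\{a,b,c\}$ also dominates $W$, and then $D=I$, $D'=\{a,b,c\}$ are disjoint dominating sets with the perfect matching $\{ua,vb,wc\}$. When $\alpha(G[W])=1$ the set $W$ is a clique, so any triple dominates $W$ and the task reduces to finding a matching saturating $I$ into $W$; I would verify Hall's condition for $I\to W$, the only non-trivial instance being a pair $\{u,v\}$ with a unique common neighbour in $W$. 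When $\alpha(G[W])=2$ one must additionally ensure the matched triple absorbs a dominating set of $W$ of size at most two, which is the delicate point.

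I expect the main obstacle to be precisely the reconciliation of ``matchable into $I$'' with ``dominating $W$''. The sharp case is when a vertex of $I$ is adjacent to two leaves of $G$, i.e.\ $G$ has a strong stem: then the two leaves can only be matched to their common neighbour, the two requirements cannot be met simultaneously, and indeed Lemma~\ref{lemma:fwd_sw_set_characterisation} shows such a $G$ has no swap set at all. Thus the crux is to show that once $G$ has no strong stem the neighbours $a,b,c$ can always be chosen to form a desired matching: the absence of strong stems is exactly what forces $|N_W(\{u,v\})|\ge 2$ in the Hall verification above, and a similar leaf-free analysis, combined with the size-$\le 2$ dominating set of $W$, should close the sub-case $\alpha(G[W])=2$.
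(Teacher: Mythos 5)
Your first case is handled correctly and completely: when $W=V\setminus I$ contains an independent triple $J$, then $J$ is a maximum, hence maximal, hence dominating set disjoint from $I$, and your Hall-condition argument (a violating set $S\subseteq I$ would make $S\cup(J\setminus N_H(S))$ an independent set of size at least $4$) is airtight. This is a genuinely different and tidier route than the paper's, which instead takes an arbitrary matching $M$ saturating $I$, lets $J$ be the matched partners, and then does a lengthy case analysis on the set $Q$ of vertices left undominated by $J$, extracting $P_7$, $P_8$, or $P_5+P_2$ configurations.

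However, your proposal is not a complete proof. The case $\alpha(G[W])\le 2$ is only a plan: the sub-case $\alpha(G[W])=1$ (where $W$ is a clique) does go through along the lines you indicate, but the sub-case $\alpha(G[W])=2$ --- which you yourself flag as ``the delicate point'' --- is never carried out. There you must produce a matching $\{ua,vb,wc\}$ into $W$ whose endpoint set $\{a,b,c\}$ also dominates $W$, i.e.\ reconcile the Hall condition with absorbing a dominating pair of $G[W]$; nothing in the proposal establishes that this is always possible, and that is exactly where the combinatorial work lies. As written, this is a genuine gap.

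That said, your observation about strong stems is more important than you present it: it does not merely isolate a hard case, it shows the theorem is \emph{false as stated}. Take a triangle $abc$, add a vertex $z$ adjacent to $a$, and attach two leaves $\ell_1,\ell_2$ to $z$. This graph is connected, has $6$ vertices, and has $\alpha=3$ (every maximum independent set is $\{\ell_1,\ell_2,x\}$ with $x\in\{a,b,c\}$), yet $z$ is a strong stem, so by Lemma~\ref{lemma:fwd_sw_set_characterisation} the graph has no swap set; replacing the triangle by a larger clique gives counterexamples of every order $n\ge 6$. Correspondingly, the paper's own proof collapses at its very first step: the assertion that a matching saturating $I$ exists ``since $|V(G)|\ge 6$'' fails here, because $\ell_1$ and $\ell_2$ lie in every maximum independent set and have only the single common neighbour $z$. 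So the theorem needs the additional hypothesis that $G$ is a weak graph, and under that hypothesis your framework (where the absence of strong stems is precisely what rescues Hall's condition for pairs in $I$) is the right one --- but the $\alpha(G[W])=2$ case still has to be written out before it constitutes a proof.
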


\begin{proof} Let $I=\{u, v, w\}$ be a maximum independent set in $G$. Since $n \geq 6$, $|N(I))| \geq 3$. Let $M$ be a matching whose endvertices include $I$; such a matching exists since $|V(G)| \geq 6$. Let $J=\{a, b, c\}$ be the set of vertices matched with $I$ by $M$, where $M=\{ub, vc, wa\}$.
If $J$ is a dominating set, we are done. So suppose $J$ is not a dominating set and let $Q$ be the set of vertices not dominated by $J$ (clearly, $Q \cap I = \emptyset$). Since we can remove (pairs of) vertices from any matching in the subgraph induced by $Q$  (one vertex from each edge of such a matching can be added to a swap set that includes the vertices in $I$), we may henceforth assume that $Q$ is an independent set. Since $J$ and $Q$ are not joined by any edges, it follows that $1 \leq |Q| \leq 2$.  Each vertex in $Q$ is adjacent to at least one vertex in $I$. If $Q$ is non-empty, there must be at least one edge in the subgraph induced by the vertices of $J$, else $\alpha(G) > 3$.

First suppose $|Q|=2$. Let $y, z$ be the vertices in $Q$. Then if both vertices in $Q$ are only adjacent to one vertex in $I$, say $u$, then $\alpha(G)=4$. So suppose $uy, vz$ are edges.
Let us first suppose that $ab$ is an edge. We construct the following matching $M'$: $ab, uy, vz$. If $cw$ is not an edge and if $w$ is independent of $y, z$, then $c, w, y, z$ is an independent set of size four, a contradiction. If $cw$ is an edge, then we can add that to $M'$ and we have the desired matching. Otherwise, $w$ must have an edge to either $y$ or $z$. If $w$ is adjacent to $z$, then we can extract the desired matching from the $P_8$: $c, v, z, w, a, b, u, y$ (i.e. $\{c,w,u\}$ and $\{v,a,y\}$ are disjoint dominating sets with a perfect matching between them). So suppose that $w$ is not adjacent to $z$, rather $w$ is adjacent to $y$.
If $u$ is adjacent to neither $z$ nor $c$, then $\{w, u, z, c\}$ is an independent set. But if $u$ is adjacent to either $z$ nor $c$, then a $P_8$ again exists (either $z, v, c, u, b, a, w, y$ or $c, v, z, u, b, a, w, y$) from which the desired matching can be extracted.

Now suppose $ab$ is not an edge (and similarly, $ac$ is not an edge). So $bc$ must be an edge. Now the matching $\{wa, vz, bc, uy\}$ yields the desired swap set (take either set of endvertices of this matching).


Finally suppose $|Q|=1$.
Let $x$ be the vertex in $Q$ and suppose $x$ is adjacent to $u$.  Recall there must be at least one edge in the subgraph induced by $J$, otherwise $\alpha(G)>3$.  If $b$ adjacent to $a$ or $c$ then the subgraph induced by $I \cup J \cup Q$ contains one of the following configurations: (i) a $P_7$ subgraph; or (2) disjoint $P_5$ and $P_2$ subgraphs.  (The subgraphs specified in (i) and (ii) are not necessarily induced.)  Otherwise, $a$ must be adjacent to $c$, else $\alpha(G)>3$.  If one of  $xv, xw, bv, bw$ exist, then $I \cup J \cup Q$ contains (i); if none of the four edges exist, then $\alpha(G)>3$.  Since both $P_7$ and the disjoint union of $P_5$ and $P_2$ contain swap sets, we are done.\end{proof}

\section{Acknowledgements}
M.E.~Messinger acknowledges research support from NSERC (DDG-2016-00017) and Mount Allison University.


\end{document}